\newtheorem{theorem}{Theorem}[section]
\newtheorem{lemma}[theorem]{Lemma}
\newtheorem{proposition}[theorem]{Proposition}
\newtheorem{corollary}[theorem]{Corollary}
\theoremstyle{definition}
\newtheorem{example}[theorem]{Example}
\newtheorem{remark}[theorem]{Remark}
\def\cc{{\mathbb C}}
\def\zz{{\mathbb Z}}
\def\nn{{\mathbb N}}
\def\qq{{\mathbb Q}}
\def\pp{{\mathbb P}}
\def\ff{{\mathbb F}}
\def\Osh{{\mathcal O}}
\def\Eff{\operatorname{Eff}}
\def\Cl{\operatorname{Cl}}
\def\NS{\operatorname{NS}}
\def\Nef{\operatorname{Nef}}
\def\BNef{\operatorname{BNef}}
\def\BEff{\operatorname{BEff}}
\def\rk{\mbox{rank}}
\renewcommand{\div}{\operatorname{div}}
\newcommand{\oo}{\mathcal{O}}
\begin{document}

\title[Mori dream K3 surfaces of Picard number four]{Mori dream K3 surfaces of Picard number four:\\ projective models and Cox rings}

\author{Michela Artebani}
\address{
Departamento de Matem\'atica, \newline
Universidad de Concepci\'on, \newline
Casilla 160-C,
Concepci\'on, Chile}
\email{martebani@udec.cl}

\author{Claudia Correa Deisler}
\address{
Departamento de Matem\'atica, \newline
Universidad de Concepci\'on, \newline
Casilla 160-C,
Concepci\'on, Chile}
\email{claucorrea@udec.cl}

\author{Xavier Roulleau}
\address{
Aix-Marseille Universit\'e,\newline
CNRS, Centrale Marseille \newline
 I2M UMR 7373,\newline
13453 Marseille, France }
\email{Xavier.Roulleau@univ-amu.fr}

\subjclass[2010]{14J28, 14C20, 14J50.}
\keywords{K3 surfaces, automorphisms, Cox rings} 
\thanks{The authors have been partially 
supported by Proyecto FONDECYT Regular 
N. 1160897 and by Proyecto Anillo ACT 1415 PIA CONICYT (first and second author).
The second author has been supported by CONICYT PCHA/DoctoradoNacional/2012/21120687.}

\begin{abstract}
In this paper we study the geometry of the $14$ families of 
K3 surfaces of Picard number four 
with finite automorphism group, 
whose N\'eron-Severi lattices have been classified by  \`E.~B. Vinberg. 
We provide projective models, 
we identify the degrees of a generating set of the Cox ring 
and in some cases we prove the unirationality of the associated moduli space.
\end{abstract}
\maketitle

\tableofcontents

\section*{Introduction} 
It follows from the global Torelli theorem that the automorphism group 
of a complex K3 surface $X$ is finite if and only if the quotient group 
${\rm O}(S)/W^{(2)}(S)$ 
is finite, where $S$ is the N\'eron-Severi lattice of $X$, ${\rm O}(S)$ is its isometry group and 
${\rm W^{(2)}}(S)$ is the subgroup generated by reflections relative to classes with self-intersection $-2$.
The hyperbolic lattices with the latter property are called  $2$-reflective 
and there are a finite number of them for each rank $\geq 3$.
Their classification is due to V.V. Nikulin \cites{VN1,VN,VN2} for rank $3$ and $\geq 5$,
and to \`E.\,B. Vinberg for rank $4$ \cite{EV1}. 
Complex K3 surfaces with finite automorphism group and Picard number at least three 
can be also characterized as those 
K3 surfaces having  finitely many $(-2)$-curves,
and in such case the classes of such curves generate the effective cone of the surface.

In this paper we study the geometry of K3 surfaces with finite automorphism group and 
having Picard number $4$: we compute the effective cone  and
the nef cone (Theorem \ref{main-eff}), we find projective models and we study their Cox ring.
In particular, we prove the following result, for more details see Propositions \ref{gen v1},  \ref{gen v2},  \ref{gen v3},  \ref{gen v4}, \ref{gen v5},  \ref{gen v6},  \ref{gen v7},  \ref{gen v8},  \ref{gen v9},  \ref{gen v10},  \ref{gen v11},  \ref{gen v12},  \ref{gen v13},  \ref{gen v14}.

In what follows we will denote by $\mathcal F_i$, $i=1,\dots,14$, the family of Mori dream K3 surfaces whose N\'eron-Severi lattice 
is isometric to the lattice $V_i$ defined in Theorem \ref{classification} and by $\mathcal M_i$ the associated moduli space.

\begin{theorem} Let $X$ be a complex K3 surface with Picard number four and with finite automorphism group. 
Then up to isomorphism equivalence $X$ belongs to one of the following families:
\begin{enumerate}
\item[$\mathcal F_1$:] complete intersections of three quadrics in $\pp^5$ having three nodes;
\item[$\mathcal F_2$:] minimal resolutions of quartic surfaces in $\pp^3$ with a node and two hyperplane sections through the node which are union of two conics;
\item[$\mathcal F_3$:] minimal resolutions of double covers of $\pp^2$ branched along a plane sextic with a double point with two bitangent lines passing through the point;
\item[$\mathcal F_4$:] double covers of the Hirzebruch surface $\ff_4$ blown-up at two general points;
\item[$\mathcal F_5$:] minimal resolutions of double covers of $\pp^2$ branched along a plane sextic with three nodes;
\item[$\mathcal F_6$:] double covers of $\pp^2$ branched along a smooth plane sextic with two 3-tangent lines and two 6-tangent conics;
\item[$\mathcal F_7$:] smooth quartic surfaces in $\pp^3$ having four hyperplane sections which are the union of two conics;
\item[$\mathcal F_8$:] minimal resolutions of double covers of $\ff_3$ branched along the union of the smooth rational curve $E$  with $E^2=-3$ and   
a reduced curve $B$ intersecting $E$ at one point $p$ and 
the fiber of $\ff_3\to \pp^1$ through $p$ at one point with multiplicity two;
\item[$\mathcal F_9$:] minimal resolutions of double covers of 
$\pp^2$ branched along a plane sextic having two nodes and such that the line through the nodes is tangent to the sextic curve at one more point;
\item[$\mathcal F_{10}$:] smooth quartic 
 surfaces in $\pp^3$ having one hyperplane section which is the union of four lines;
\item[$\mathcal F_{11}$:] smooth quartic 
 surfaces in $\pp^3$ having three reducible hyperplane sections which are the union of two conics;
\item[$\mathcal F_{12}$:] double covers of $\pp^2$ branched along a smooth plane sextic with three $3$-tangent lines such that the cover is trivial over the union of the three lines;
\item[$\mathcal F_{13}$:] double covers of
$\pp^2$ branched along a smooth plane sextic with three $3$-tangent lines such that the cover is not trivial over the union of the three lines;
\item[$\mathcal F_{14}$:] double covers of
$\pp^2$ branched along a smooth plane sextic with one $3$-tangent line and three $6$-tangent conics.
\end{enumerate}
\end{theorem}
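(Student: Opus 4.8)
The plan is to combine Vinberg's lattice classification (Theorem~\ref{classification}), the explicit description of the effective and nef cones (Theorem~\ref{main-eff}), and the classical theory of linear systems on K3 surfaces due to Mayer and Saint-Donat. By the global Torelli theorem and the discussion in the introduction, a complex K3 surface $X$ with Picard number $4$ has finite automorphism group if and only if its N\'eron-Severi lattice $S=\NS(X)$ is $2$-reflective, and by Vinberg's theorem each such $S$ is isometric to exactly one of the lattices $V_1,\dots,V_{14}$. It therefore suffices to treat each $i$ separately: to produce from the lattice data a projective model of the stated shape for every K3 surface with $\NS(X)\cong V_i$, and conversely to check that a K3 surface admitting such a model has N\'eron-Severi lattice $V_i$, so that membership in $\mathcal F_i$ is well defined.

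For each $V_i$, Theorem~\ref{main-eff} yields the finite set of $(-2)$-curves and the generators of $\Nef(X)$. We single out a distinguished nef and big class $D$ of small self-intersection, $D^2\in\{2,4,8\}$, the choice being dictated by which classical model is cleanest, and we analyse $\phi_{|D|}$ through the Saint-Donat dichotomy: either $|D|$ is base-point free and $\phi_{|D|}$ is birational onto a normal surface with rational double points, or $\phi_{|D|}$ is generically $2$-to-$1$ onto a surface of minimal degree (a plane, a quadric, or a rational scroll). Accordingly $D^2=2$ gives a double cover of $\pp^2$ branched along a plane sextic (families $\mathcal F_3,\mathcal F_5,\mathcal F_6,\mathcal F_9,\mathcal F_{12},\mathcal F_{13},\mathcal F_{14}$); $D^2=4$ gives a quartic surface in $\pp^3$ --- very ample in $\mathcal F_7,\mathcal F_{10},\mathcal F_{11}$ and nodal, with $(-2)$-curves contracted by $\phi_{|D|}$, in $\mathcal F_2$ --- or a double cover of a quadric; and $D^2=8$ gives a surface in $\pp^5$ which generically is a complete intersection of three quadrics, nodal because of the curves contracted by $\phi_{|D|}$ (family $\mathcal F_1$). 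The remaining two families $\mathcal F_4$ and $\mathcal F_8$ are handled instead via a nef class of self-intersection $0$ and the associated double cover of a (blown-up) Hirzebruch surface. In each case the $(-2)$-curves $C$ with $C\cdot D=0$ are exactly those contracted by $\phi_{|D|}$, producing the nodes listed in $\mathcal F_1,\mathcal F_2,\mathcal F_3,\mathcal F_5,\mathcal F_9$.

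The remaining geometric refinements are dictated by the other classes of $\Eff(X)$ provided by Theorem~\ref{main-eff}. A decomposition $D=\Gamma_1+\dots+\Gamma_r$ into classes of smooth rational curves with each $\Gamma_j\cdot D=d$ corresponds to a reducible hyperplane section splitting into $r$ rational curves of degree $d$ --- two conics when $(r,d)=(2,2)$ (families $\mathcal F_2,\mathcal F_7,\mathcal F_{11}$), four lines when $(r,d)=(4,1)$ (family $\mathcal F_{10}$); when $D$ is not very ample the same decompositions describe the reducible hyperplane sections of the singular model. In a double-cover model $\pi\colon X\to\pp^2$ with covering involution $\iota$, a $(-2)$-class $L$ with $\iota^{*}L\neq L$ and $L+\iota^{*}L\sim kD$ satisfies $L\cdot D=k$ and $L\cdot\iota^{*}L=3k$, so its image is a rational curve of degree $k$ meeting the branch sextic with a $3k$-fold tangency; this accounts for the $3$-tangent lines ($k=1$) and $6$-tangent conics ($k=2$) appearing in $\mathcal F_3,\mathcal F_6,\mathcal F_9,\mathcal F_{12},\mathcal F_{13},\mathcal F_{14}$. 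Whether the $(-2)$-curves lying over a configuration of lines can be split consistently into two disjoint curves --- equivalently, whether the cover is trivial over that configuration --- is a divisibility condition in $\NS(X)$, and it is precisely this condition that separates $V_{12}$ from $V_{13}$. For each $i$ one then verifies that the classes supplied by Theorem~\ref{main-eff} assemble into exactly the configuration in the statement, and conversely that a K3 surface carrying that configuration has N\'eron-Severi lattice containing a copy of $V_i$; since the $V_i$ are primitive, this forces equality and pins down the family.

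The main obstacle is the case-by-case analysis of $\phi_{|D|}$: deciding whether it is birational or generically $2$-to-$1$, controlling the base locus through the Saint-Donat decomposition $D=kE+\Gamma$, and identifying the image together with its singular locus. For the complete-intersection statements one must moreover verify projective normality and that the homogeneous ideal is generated by quadrics, through a dimension count of the quadrics vanishing on the image. On the converse side, the delicate point is to show that the generic member of each family really has Picard number $4$, with N\'eron-Severi lattice equal to $V_i$ rather than to a proper overlattice.
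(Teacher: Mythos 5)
Your proposal is correct and follows essentially the same route as the paper: Vinberg's lattice classification, the computation of the effective and nef cones via Theorem~\ref{main-eff}, and then, family by family, the Saint-Donat analysis of a distinguished nef class of square $0$, $2$, $4$, $6$ or $8$ (base-point freeness being handled by the absence of sections of the elliptic fibrations in all but two families), with the nodes, reducible hyperplane sections, tritangent lines and $6$-tangent conics read off from the decompositions of $h$ or $2h$ into $(-2)$-classes --- which is exactly what Propositions~\ref{gen v1}--\ref{gen v14} do. Two small caveats, neither affecting the theorem as stated: your identity $L\cdot\iota^{*}L=3k$ is really $k^{2}+2$ and agrees with $3k$ only for the relevant cases $k=1,2$; and your converse argument that a copy of $V_i$ inside $\NS(X)$ ``forces equality since the $V_i$ are primitive'' does not rule out finite-index even overlattices --- the paper instead exhibits explicit members of Picard number exactly four (e.g.\ via reduction mod $p$ and the Artin--Tate formula) or counts moduli.
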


By \cite[Theorem 2.7 and Theorem 2.11]{A.H.L}  the Cox ring of a 
complex projective K3 surface is finitely 
generated if and only if its automorphism group is finite.
Using techniques developed in \cite{A.C.L}, 
we compute the degrees of a generating set for the
Cox rings of K3 surfaces with finite automorphism group 
of Picard number $4$. 
A similar description for the case of Picard number three 
has been given in \cite{A.C.L}. Our main result is the following.


\begin{theorem}\label{main-cox}
Let $X$ be a  Mori dream K3 surface of Picard number four 
such that $\NS(X)$ is not isometric to $V_{14}$.
The degrees of a set of generators of the Cox ring $R(X)$ are given in
Table \ref{TableGen rank4} and all degrees in the Table are necessary to generate 
$R(X)$, except eventually for those marked with a star.
In case $\NS(X)\cong V_{14}$ the degrees of a set of generators of $R(X)$  
are either classes of $(-2)$-curves or sums of at most three elements in the Hilbert 
basis of the nef cone. Moreover, the degrees given in Table \ref{Table degree generator F14}
 are necessary to generate $R(X)$.
\end{theorem}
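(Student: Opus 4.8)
The plan is to follow the general method of \cite{A.C.L} for the Cox rings of Mori dream K3 surfaces, reducing the statement to a finite computation on each of the fourteen lattices $V_i$, which is then carried out in Propositions~\ref{gen v1}--\ref{gen v14} with the help of the projective models constructed there. First one records two structural reductions. Since $X$ is a Mori dream space, $\Eff(X)$ is generated by the finitely many $(-2)$-curves (Theorem~\ref{main-eff}) and $\Nef(X)$ is a rational polyhedral cone, hence has a finite Hilbert basis $\mathcal B$. Moreover, for any effective class $D$ with Zariski decomposition $D=P+\sum_i n_iC_i$, where $P$ is nef and the $C_i$ are $(-2)$-curves, multiplication by the fixed part $\prod_i x_{C_i}^{n_i}$ identifies $H^0(X,D)$ with $H^0(X,P)$; hence the degree of any minimal generator of $R(X)$ is either the class of a $(-2)$-curve --- and each such class does occur, since the curve is irreducible so its defining section cannot be written as a product --- or a nef class.

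It then remains to decide which nef classes are needed. A nef class $D$ is not the degree of a minimal generator exactly when the sum of the multiplication maps $H^0(X,D_1)\otimes H^0(X,D_2)\to H^0(X,D)$, taken over all decompositions $D=D_1+D_2$ into nonzero effective classes, is surjective. Here one invokes the classical theory of linear systems on K3 surfaces: for $D$ nef with $D^2\ge 4$, the morphism $\varphi_{|D|}$ is either birational onto a projectively normal image, so that $\bigoplus_n H^0(X,nD)$ is generated in degree one, or hyperelliptic, in which case a bounded number of further degrees is enough; the cases $D^2\in\{0,2\}$, the multiples of a genus-one pencil, and systems with base points are treated separately. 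Combining this with the surjectivity criteria for the maps $H^0(X,D_1)\otimes H^0(X,D_2)\to H^0(X,D_1+D_2)$ for $D_1,D_2$ nef (Saint-Donat, and the refinements in \cite{A.C.L}), one proves that a nef degree of a minimal generator must be a sum of at most three elements of $\mathcal B$. This is the crucial finiteness statement: for each $V_i$ it leaves only an explicit finite list of candidate degrees.

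The case-by-case verification then proceeds as follows. For each $V_i$ one reads the $(-2)$-curve classes and the Hilbert basis $\mathcal B$ of $\Nef(X)$ off the description of the cones in Theorem~\ref{main-eff}; one enumerates the candidate nef degrees --- sums of at most three elements of $\mathcal B$ --- ordered by self-intersection; and for each one decides surjectivity of the relevant multiplication maps. The projective model of $\mathcal F_i$ given in Proposition~\ref{gen v1} and its analogues makes these checks effective, by exhibiting explicit sections in small degree: dimensions are controlled via Riemann--Roch, with $h^0(X,D)=2+\tfrac12 D^2$ for $D$ nef and $D^2>0$, and non-surjectivity in a degree $D$ is witnessed by a form on the model that does not arise from products of forms of smaller degree. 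For $V_1,\dots,V_{13}$ this analysis is complete and yields Table~\ref{TableGen rank4}; for a handful of degrees $D$ one can only show that the listed degrees already generate $H^0(X,D)$ without deciding whether $D$ may be dropped --- these are the starred entries --- while for every other listed degree one checks that $H^0(X,D)$ is not spanned by the remaining listed degrees, which gives necessity. For $V_{14}$ the iteration of the previous paragraph genuinely stops at sums of three basis elements and some of the corresponding surjectivities remain open, so one obtains only the stated inclusion of the generating degrees, together with the necessity of the degrees in Table~\ref{Table degree generator F14}, proved exactly as before.

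The main obstacle is precisely this borderline surjectivity analysis: the nef candidates of small self-intersection, the hyperelliptic systems, and the decompositions $D=D_1+D_2$ in which a summand acquires a fixed component or has self-intersection zero escape the general criteria and require ad hoc arguments on the projective model; it is this that forces the weaker conclusion for $V_{14}$ and the starred entries of Table~\ref{TableGen rank4}. A secondary point requiring care is to confirm, lattice by lattice, that no sum of four elements of $\mathcal B$ can be the degree of a minimal generator, i.e.\ that every such class factors through an already surjective multiplication map, so that the search may indeed be restricted to sums of at most three basis elements.
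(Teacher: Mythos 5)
Your overall strategy is the one the paper uses: reduce to nef degrees by stripping off the fixed $(-2)$-curves, bound the candidate nef degrees by a finiteness statement, run the surjectivity criteria (Koszul-type maps, Ottem's criterion) on the resulting finite list, and establish necessity of the listed degrees by checking that products of sections of smaller degree cannot span. The reductions in your first paragraph and the case-by-case scheme in your third match the paper's proof of Theorem~\ref{main-cox} and the Propositions \ref{gen v1}--\ref{gen v14}.

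There is, however, one genuine gap in your finiteness step. You assert that a nef degree of a minimal generator ``must be a sum of at most three elements of $\mathcal B$,'' and you propose to re-derive this from projective normality and Saint-Donat's results. But the correct statement --- Theorem~\ref{gen}, i.e.\ \cite[Theorem 2.7]{A.C.L}, which the paper simply invokes --- has a third alternative that you drop: the degree may also be of the form $2(F+F')$ with $F,F'$ smooth elliptic curves satisfying $F\cdot F'=2$, and such a class need not be a sum of at most three elements of the Hilbert basis of the nef cone. This exceptional case is not vacuous in Picard number four: it occurs exactly for $V_5=U(2)\oplus 2A_1$ and $V_9=U(2)\oplus A_2$, and the paper has to dispose of it separately (for $V_5$ by quoting the known presentation of $R(X)$ from \cite[Proposition 6.7, i)]{A.H.L}, and for $V_9$ by including these degrees in the computation). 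As written, your enumeration of candidate degrees could miss a generator for these two lattices. A secondary point you gloss over is the base-point-freeness hypotheses in Theorem~\ref{teokoszul} and Proposition~\ref{ottem}: by Theorem~\ref{bl} a nef effective class can fail to be base point free precisely when there exist $F,E$ with $F^2=0$, $E^2=-2$, $F\cdot E=1$, which happens for $V_4$ and $V_8$; the paper handles $V_4$ by citation and $V_8$ by explicitly excluding the non-base-point-free class $\BNef[4]$ from the tests, and your proof should do something equally explicit rather than saying these systems are ``treated separately.''
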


Moreover, for a very general K3 surface in the families $\mathcal F_i$ with $i\in \{4,5,6,7,10,12\}$ 
we provide a presentation for the Cox ring.

As a consequence of the first Theorem we also obtain the following result, see Corollary \ref{uni1}, \ref{uni3}, \ref{uni4}, \ref{uni5}, \ref{uni6}, \ref{uni7}, \ref{uni8}, \ref{uni9}, \ref{uni10}, \ref{uni11} and \ref{uni12}.

\begin{theorem} 
The moduli spaces  $\mathcal M_i$ with $i\in \{1,3,4,5,6,7, 8,9,10,11,12\}$ are unirational.
Moreover, $\mathcal M_3$ is rational.
\end{theorem}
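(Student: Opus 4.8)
The plan is to derive both assertions from the explicit projective models furnished by the first Theorem, reducing the unirationality of each $\mathcal{M}_i$ to the rationality of a parameter space of defining data modulo a connected linear algebraic group. For a fixed index $i$ in the list, I would start from the description of the general member $X$ of $\mathcal{F}_i$ and fix, in general position, the combinatorial configuration attached to it: the three nodes for $\mathcal{F}_1$ and $\mathcal{F}_5$, the plane together with the four lines cutting out the special hyperplane section for $\mathcal{F}_{10}$, the planes and pairs of conics for $\mathcal{F}_7$ and $\mathcal{F}_{11}$, the two blown-up points on $\ff_4$ for $\mathcal{F}_4$, or the curve $E$ and the distinguished fiber of $\ff_3\to\pp^1$ for $\mathcal{F}_8$. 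Once this configuration is fixed, the surfaces in $\mathcal{F}_i$ realizing it are cut out, inside a fixed linear system (quadrics in $\pp^5$, quartics in $\pp^3$, plane sextics, or sections of a line bundle on a rational surface), by conditions that are \emph{linear} in the coefficients: vanishing at a point, having an ordinary node at a point, or containing a fixed curve. Hence the resulting parameter space $P_i$ is a locally closed subset of a projective space, in particular rational. Let $G_i$ be the connected linear algebraic group — a subgroup of $\mathrm{PGL}$, or of $\Aut$ of the ambient rational surface — preserving the chosen configuration; then $G_i$-equivalent points of $P_i$ yield isomorphic $\NS(X)$-polarized K3 surfaces, so there is a natural rational map $P_i \dashrightarrow \mathcal{M}_i$. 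By the first Theorem every member of $\mathcal{M}_i$ admits such a model, so this map is dominant; a dimension count ($\dim \mathcal{M}_i = 20-4 = 16$) is consistent with dominance, and unirationality of $\mathcal{M}_i$ follows from rationality of $P_i$.

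The families $\mathcal{F}_3, \mathcal{F}_6, \mathcal{F}_9$ and $\mathcal{F}_{12}$ require an extra idea, because there the branch sextic is asked to be tangent to prescribed lines or conics (a bitangent line through the double point for $\mathcal{F}_3$, the line through the two nodes tangent at a third point for $\mathcal{F}_9$, three $3$-tangent lines for $\mathcal{F}_{12}$, a $3$-tangent line and three $6$-tangent conics for $\mathcal{F}_6$), and tangency is not linear in the coefficients of the sextic. Here I would parametrize the incidence correspondence rather than its image: first choose the auxiliary line(s) and conic(s), then choose the prescribed contact points on each of them, and only then the sextics having the required order of contact at those points — which, the contact data now being fixed, form a linear system. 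This exhibits the total parameter space as a tower of projective and affine bundles over a product of projective spaces and symmetric powers of $\pp^1$, hence a rational variety; composing the projection to the sextic with the quotient by the stabilizer group gives a dominant rational map onto $\mathcal{M}_i$, and unirationality follows. I expect the principal technical point to be precisely this bundle bookkeeping: one must check that the fibers have the expected dimension, so that the composite map is dominant onto the $16$-dimensional $\mathcal{M}_i$ — equivalently, that a very general such sextic carries exactly the prescribed tangent lines and conics, which is also what forces $\NS(X)\cong V_i$ rather than a strictly larger lattice.

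Finally, for $\mathcal{M}_3$ I would push the $\mathrm{PGL}_3$-normalization as far as possible: fix the double point of the branch sextic at a chosen point of $\pp^2$ and the two bitangent lines through it as two fixed lines. The residual stabilizer $G$ is then a small connected solvable group (a rank-two torus times a unipotent part), extended by the involution exchanging the two lines, acting linearly on the affine cone over the linear system of admissible sextics (with the residual contact data also normalized, the remaining choices being absorbed into a rational parameter space). Since the identity component of $G$ is connected and solvable and acts generically freely, Miyata's theorem (or an explicit choice of invariant coordinates) shows its quotient is rational; the remaining exchange involution can then be dealt with by an elementary birational simplification, yielding that $\mathcal{M}_3$ is rational.
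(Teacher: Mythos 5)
Your scheme for $\mathcal{F}_1,\mathcal{F}_4,\mathcal{F}_5,\mathcal{F}_7,\mathcal{F}_8,\mathcal{F}_9,\mathcal{F}_{10}$ is essentially the paper's (fix a configuration, observe that the remaining conditions are linear, mod out by the stabilizer), but there is a genuine gap for $\mathcal{F}_6$ and $\mathcal{F}_{12}$: these families are \emph{not} characterized by the tangency configuration alone. What defines them is the additional requirement that the double cover be trivial over the union $D$ of the tangent lines/conics -- this is exactly what separates $V_{12}$ from $V_{13}$ (note that $\mathcal{M}_{13}$ is conspicuously absent from the statement). In terms of the branch sextic $B$, triviality over $D$ is equivalent to the contact divisor being cut out on $D$ by a plane cubic, i.e.\ to the special equation shapes $F_1F_2G_1G_2+F^2=0$ and $F_1F_2F_3G_1+G_2^2=0$ used in Propositions \ref{gen v6} and \ref{gen v12}. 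Your incidence-correspondence parametrization by freely chosen contact points misses this closed condition. Concretely, for three tritangent lines the union $D$ is a triangle, of arithmetic genus $1$ with $\Pic^0(D)\cong\cc^*$; a sextic not containing any of the lines and meeting each with even multiplicity forces $\oo_D(p_1+\dots+p_9)^{\otimes 2}\cong\oo_D(6)$, so the admissible $9$-point configurations form a proper ``$\pm1$'' subvariety of your configuration space. Over a general choice of contact points every sextic satisfying your linear tangency conditions must contain one of the lines, and over the wrong sheet the smooth sextics yield surfaces with $\NS\cong V_{13}$, not $V_{12}$. So your projective bundle is rational, but its generic point does not map to $\mathcal{M}_{12}$ (and likewise for $\mathcal{M}_6$): dominance fails. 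The missing idea, which is the paper's key point, is to parametrize instead by the polynomial data $(F_i,G_j,F)$ of the special equation -- a product of linear systems, manifestly rational -- and to check conversely (Corollaries \ref{uni6}, \ref{uni12}) that a general member of that family has the required N\'eron--Severi lattice.

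Two smaller points. For $\mathcal{F}_{11}$, prescribing all six conics over-determines the configuration (the six conics satisfy incidence constraints through the twelve points on the three lines $P_i\cap P_j$), so the rationality of your space of admissible configurations is not immediate; the paper prescribes only three conics, one per plane, via quadrics through chosen points of the coordinate triangle, takes the ($10$-dimensional) system of quartics through them, and recovers the other three conics as residual intersections. For $\mathcal{M}_3$, your Miyata-plus-involution route is not wrong in spirit but is under-specified (generic freeness, the exchange involution, and birationality rather than mere dominance onto moduli all need justification) and is unnecessary: the paper normalizes the node \emph{and} the four contact points, after which the stabilizer in ${\rm PGL}_3$ is trivial and the admissible sextics form a linear system that is an open subset of $\mathbb{A}^{16}$, giving rationality directly.
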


\section{Preliminaries}
We will work over the field $\cc$ of complex numbers.

\subsection{Linear systems of K3 surfaces}

To be self-contained, let us recall the following results of Saint-Donat \cite{SD}. For a divisor $D$ such that the linear system $|D|$ is base point free, 
we denote by $\varphi_{D}$ the morphism associated to $|D|$.
 
\begin{theorem}\label{bl}
\label{thm:SaintDonat-1} Let $D$ be a divisor on a K3 surface $X$.\\
a) if D is effective, non-zero, and $D^{2}=0$, then $D\sim aF$ where
$|F|$ is a free elliptic pencil and $a\in\nn$.\\
b) If $D$ is big and nef, then $h^{0}(D)=2+\frac{1}{2}D^{2}$, and
either $|D|$ has no fixed part, or $D\sim aF+E$ where $F$ is
a smooth elliptic curve and $E$ is a $(-2)$-curve such that $F\cdot E=1$.\\
c) If $D$ is big and nef and $|D|$ has no fixed part, then $|D|$
is base point free and $\varphi_{D}$ is either $2$ to $1$ onto
its image (hyperelliptic case) or it maps $X$ birationally onto its
image, contracting the $(-2)$-curves $\Gamma$ such that $D\cdot \Gamma=0$ to
singularities of type $ADE$. 
\end{theorem}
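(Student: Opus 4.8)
The plan is to recover the three statements from Riemann--Roch on the K3 surface, the Mumford--Ramanujam vanishing theorem and the Hodge index theorem, following Saint-Donat \cite{SD}. Throughout, Riemann--Roch reads $\chi(\Osh_X(D))=2+\tfrac12 D^2$, and $H^2(\Osh_X(D))=H^0(\Osh_X(-D))^\vee$ vanishes as soon as $-D$ is not effective. The numerical assertion of b) is then immediate: if $D$ is big and nef then $-D$ is not effective, so $H^2(\Osh_X(D))=0$, while $H^1(\Osh_X(D))=0$ by Mumford--Ramanujam vanishing, whence $h^0(D)=\chi(\Osh_X(D))=2+\tfrac12 D^2$.

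For a), since $D$ is effective and non-zero we have $h^0(-D)=0$, so Riemann--Roch forces $h^0(D)\ge 2$ and $|D|$ is at least a pencil. I would reduce to the case in which $D$ is nef (pass to the mobile part of $|D|$, which has the same $h^0$) and primitive (divide the class by its divisibility); then $h^0(D)=2$, so $|D|$ is a pencil, and since two distinct members of a pencil with no common component meet exactly along its base locus, which here has length $D^2=0$, the pencil is base-point free. By adjunction and Bertini the general member is a smooth curve of arithmetic genus one, so $|D|$ is a free elliptic pencil and $D\sim aF$ with $a\in\nn$.

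For the dichotomy in b), write $|D|=|M|+V$ with $V$ the fixed part and $M$ the mobile part; then $M$ is nef, so $M^2\ge 0$, and $h^0(M)=h^0(D)$. Comparing Riemann--Roch for $M$ and $D$ gives $D^2=M^2$, hence $2M\cdot V+V^2=M\cdot V+D\cdot V=0$, and as $M$ and $D$ are nef both summands vanish. If $M^2>0$ then $M$ is big and nef, so $M^\perp$ is negative definite and $M\cdot V=0$ forces $V=0$: no fixed part. If $M^2=0$, part a) gives $M\sim aF$ with $|F|$ a free elliptic pencil; combining $h^0(D)=a+1$ with the value $2+\tfrac12 D^2$, the inequality $D\cdot F=F\cdot V\ge 1$ (Hodge index, since $D$ is big and nef and $F$ is effective and non-zero) and $D\cdot V\ge 0$ yields $F\cdot V=1$ and $V^2=-2$; a short argument with the irreducible components of $V$, at most one of which can meet $F$, then shows that $V=E$ is a single $(-2)$-curve with $E\cdot F=1$.

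For c), the essential and most delicate step — the one I expect to be the main obstacle — is that a big and nef $D$ whose linear system has no fixed part is base-point free; I would establish this following Saint-Donat's case analysis, which shows that a base point forces $D$ into a shape incompatible with the absence of a fixed part (a Reider-type argument is also available). Granting base-point freeness, $\varphi_D\colon X\to Y\subset\pp^N$ with $N=1+\tfrac12 D^2$ maps onto an irreducible surface $Y$ (a surface because $D$ is big), non-degenerate because $|D|$ is complete. The classical bound $\deg Y\ge\operatorname{codim}Y+1=\tfrac12 D^2$, together with $\deg(\varphi_D)\cdot\deg Y=D^2$, forces $\deg\varphi_D\le 2$, so if $\varphi_D$ is not birational it has degree $2$: the hyperelliptic case. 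If $\varphi_D$ is birational it contracts exactly the irreducible curves $\Gamma$ with $D\cdot\Gamma=0$; by the Hodge index theorem such $\Gamma$ satisfy $\Gamma^2<0$, hence $\Gamma^2=-2$ and $\Gamma\cong\pp^1$ by adjunction, and since $K_X=0$ the morphism $X\to Y$ is crepant, so the contracted negative-definite configurations of $(-2)$-curves map to canonical, i.e.\ $ADE$, surface singularities. Apart from base-point freeness, every step above is a formal consequence of Riemann--Roch and the Hodge index theorem.
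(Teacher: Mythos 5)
The paper itself gives no proof of this statement: it is explicitly quoted from Saint-Donat \cite{SD} (``to be self-contained, let us recall\dots''), so your write-up is really a reconstruction of the source's argument rather than something to compare with an in-paper proof. Most of it is the standard and correct route: Riemann--Roch plus $h^2=0$ plus Ramanujam vanishing for the dimension count in b), the bound $\deg\varphi_D\le 2$ from $\deg Y\ge\operatorname{codim}Y+1$, and the crepant contraction of negative-definite configurations of $(-2)$-curves in c). The step you explicitly defer --- base-point-freeness of a big and nef system without fixed part --- is the only genuinely hard point of c), so as written c) is not proved but outsourced to \cite{SD}, exactly as the paper outsources it.

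There are, however, two genuine defects. In a), passing to the mobile part proves the conclusion for the mobile part of $|D|$, not for $D$ itself; you never show the fixed part vanishes, and it need not: for $F$ an elliptic fiber and $E$ a section, $D=F+E$ is effective and non-zero with $D^2=0$, yet $D\cdot E=-1$, so $D$ is not nef and cannot be linearly equivalent to a multiple of a free pencil. The statement is only true under Saint-Donat's actual hypothesis that $|D|$ has no fixed component (which is how the paper uses it, via Corollary \ref{section}); note also that $h^0=2$ for the primitive nef isotropic class does not follow from Mumford vanishing (which needs $D^2>0$) and is better obtained from the Stein factorization of $\varphi_D$. In b), the claim ``comparing Riemann--Roch for $M$ and $D$ gives $D^2=M^2$, hence $M\cdot V=D\cdot V=0$'' is false in precisely the case you then go on to analyze: Riemann--Roch only gives $h^0(M)=2+\tfrac12 M^2+h^1(M)$, and $h^1(M)=a-1\neq 0$ when $M=aF$ with $a\ge 2$; indeed in the exceptional case $D=aF+E$ one has $D^2=2a-2>0=M^2$ and $M\cdot V=a\neq 0$. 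Your separate computation for the case $M^2=0$ does not actually use that claim and reaches the correct conclusion ($F\cdot V=1$, $V^2=-2$), but the argument as written contains a false intermediate step and should be restructured so that $D^2=M^2$ is asserted only after $M^2>0$ is assumed.
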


\begin{corollary}\label{section}
Let $X$ be a K3 surface such that none of its elliptic fibrations has sections 
and $D$ be an effective non-zero nef divisor of $X$, then $|D|$ is base point free.
\end{corollary}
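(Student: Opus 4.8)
The plan is to apply Saint-Donat's Theorem~\ref{bl}, distinguishing the two possible values of $D^2$. Since $D$ is nef we have $D^2\geq 0$, so either $D^2=0$, or $D$ is big and nef.

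First I would dispose of the case $D^2=0$. By part a) of Theorem~\ref{bl} we may write $D\sim aF$ with $a\in\nn$ and $|F|$ a free elliptic pencil. Since $aF_0\in|D|$ for every $F_0\in|F|$, and a point lies on $aF_0$ exactly when it lies on $F_0$, any base point of $|D|$ would be a base point of $|F|$; as $|F|$ is base point free, so is $|D|$. (Note that this case does not even use the hypothesis on sections.)

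The substance of the argument is the case $D^2>0$, where $D$ is big and nef. By part b) of Theorem~\ref{bl}, either $|D|$ has no fixed part --- in which case part c) of the same theorem immediately yields that $|D|$ is base point free --- or $D\sim aF+E$ with $F$ a smooth elliptic curve, $E$ a $(-2)$-curve, and $F\cdot E=1$. I would rule out this second alternative using the hypothesis that no elliptic fibration of $X$ has a section: applying part a) of Theorem~\ref{bl} to $F$ (effective, nonzero, with $F^2=0$) and using that $F$ is irreducible shows that $|F|$ is a free elliptic pencil, hence the fibres of an elliptic fibration $\pi\colon X\to\pp^1$. Then $F\cdot E=1$ means that $\pi|_E\colon E\to\pp^1$ has degree one; since a $(-2)$-curve is isomorphic to $\pp^1$, this morphism is an isomorphism and $E$ is a section of $\pi$, a contradiction. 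Therefore $|D|$ has no fixed part, and we conclude as above.

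I do not anticipate a serious obstacle. The one point that needs a little care is the identification of $E$ as a section, i.e.\ verifying that the splitting $D\sim aF+E$ furnished by Theorem~\ref{bl} really produces a genuine elliptic fibration admitting $E$ as a section; everything else is a routine walk through the cases of Theorem~\ref{bl}.
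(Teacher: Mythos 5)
Your proof is correct and follows essentially the same route as the paper: reduce to the case where $|D|$ has a fixed part via Theorem~\ref{bl}~b),~c), obtain the decomposition $D\sim aF+E$ with $F\cdot E=1$, and derive a contradiction because $E$ would be a section of the elliptic fibration $\varphi_F$. The only difference is that you treat the case $D^2=0$ explicitly, which the paper leaves implicit; this is a harmless (and slightly more careful) addition.
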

\begin{proof}
Assume that $D$ is a nef divisor with non-empty base locus. By Theorem \ref{bl} $D$ is linearly equivalent to a sum involving two smooth curves $F, E$ such that $F$ is a smooth elliptic curve and $E$ a $(-2)$-curve with $F\cdot E=1$. The morphism associated to $|F|$ defines an elliptic fibration $\varphi_F:X\to \pp^1$ (see \cite{SD}) and $E$ is a section of it.
\end{proof}

\begin{theorem}\label{hyp}
\label{thm:SaintDonat-2}(See \cite[Proposition 5.7]{SD}
and its proof). Let $|D|$ be a complete linear system without fixed components 
on a K3 surface $X$.  Then $|D|$ is
hyperelliptic if and only if  one of the following cases occurs: \\
a) $D^{2}\geq 4$ and there is a smooth elliptic curve $F$ such that
$F\cdot D=2$;\\
b) $D^{2}\geq 4$ and there is an irreducible curve $D'$ with $D'^{2}=2$
and $D\sim 2D'$ (thus in that case $D^{2}=8$).\\ 
c) $D^{2}=2$, and in this case $\varphi_{D}$ is a double cover of $\pp^{2}$.
\end{theorem}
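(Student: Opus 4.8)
The plan is to follow Saint-Donat's argument, split according to the value of $D^{2}$, which is even because the intersection form on a K3 surface is even. First, some reductions: since $|D|$ has no fixed components, any curve $C$ with $D\cdot C<0$ would be a fixed component, so $D$ is nef and $D^{2}\ge 0$; if $D^{2}=0$ then $D\sim aF$ by Theorem~\ref{bl}(a), $\varphi_{D}$ factors through a rational normal curve, and $|D|$ is not hyperelliptic, so the statement is vacuous. Hence I may assume $D^{2}>0$, so $D$ is big and nef, $h^{0}(D)=2+\tfrac{1}{2}D^{2}$, and by Theorem~\ref{bl}(c) the morphism $\varphi_{D}\colon X\to\pp^{N}$ with $N=1+\tfrac{1}{2}D^{2}$ is either birational onto its image or generically $2{:}1$, the latter being the hyperelliptic case. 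If $D^{2}=2$ then $N=2$, the image of $\varphi_{D}$ is a surface in $\pp^{2}$ (hence $\pp^{2}$ itself), and $\deg\varphi_{D}=D^{2}=2$, so $|D|$ is always hyperelliptic and $\varphi_{D}$ a double cover of $\pp^{2}$: this is case (c).

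Assume now $D^{2}\ge 4$, so $N\ge 3$ and (c) cannot occur. For ($\Leftarrow$): if there is a smooth elliptic curve $F$ with $F\cdot D=2$, then for a general $F_{0}\in|F|$ the image $\varphi_{D}(F_{0})$ is a curve of degree dividing $F_{0}\cdot D=2$; were $\varphi_{D}|_{F_{0}}$ birational onto its image, that image would be a plane conic, of geometric genus $0$, contradicting $g(F_{0})=1$, so $\varphi_{D}|_{F_{0}}$ has degree $2$, $\varphi_{D}$ is not birational onto its image, and $|D|$ is hyperelliptic. If instead $D\sim 2D'$ with $D'$ irreducible and $D'^{2}=2$ (so $D^{2}=8$), then $D'$ is nef (an irreducible curve of non-negative self-intersection), has no fixed component since $h^{0}(D')=3$, and is base point free by Theorem~\ref{bl}(c); by the case $D^{2}=2$ already treated, $\varphi_{D'}$ is a double cover of $\pp^{2}$; since $h^{0}(2D')=6=\dim\Sym^{2}H^{0}(D')$ and the multiplication map $\Sym^{2}H^{0}(D')\to H^{0}(2D')$ is injective (it is the pullback by $\varphi_{D'}$ of the isomorphism $\Sym^{2}H^{0}(\pp^{2},\Osh(1))\xrightarrow{\sim}H^{0}(\pp^{2},\Osh(2))$), it is an isomorphism, so $|D|$ is the pullback of $|\Osh_{\pp^{2}}(2)|$ and $\varphi_{D}$ is $\varphi_{D'}$ followed by the $2$-uple Veronese embedding, again $2{:}1$.

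For ($\Rightarrow$), suppose $\varphi_{D}$ is generically $2{:}1$ onto its image $S\subset\pp^{N}$. Then $S$ is non-degenerate, irreducible, of degree $D^{2}/\deg\varphi_{D}=D^{2}/2=N-1=\operatorname{codim}S+1$, i.e.\ a surface of minimal degree, and in particular rational. The degree-$2$ map $\varphi_{D}$ induces a birational involution of $X$, which is biregular because $X$ has no $(-1)$-curve; call it $\iota$. Then $\varphi_{D}$ factors through $X\to X/\iota$, and $X/\iota\to S$ is birational; since $X/\iota$ is thus rational, $\iota$ is neither fixed point free (else $X/\iota$ would be Enriques) nor symplectic (else $X/\iota$ would be birational to a K3), so $\iota$ is a non-symplectic involution with non-empty fixed locus, which is a disjoint union of smooth curves; hence $X/\iota$ is smooth, and therefore so is $S$ (it is $X/\iota$ with finitely many $(-1)$-curves contracted). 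Thus $S$ is a \emph{smooth} surface of minimal degree: either the Veronese surface $v_{2}(\pp^{2})\subset\pp^{5}$ or a smooth $2$-dimensional rational normal scroll. If $S=v_{2}(\pp^{2})$ then $D^{2}=8$, and composing $\varphi_{D}$ with $v_{2}^{-1}$ exhibits it as a double cover $\psi\colon X\to\pp^{2}$; then $D':=\psi^{*}\Osh_{\pp^{2}}(1)$ satisfies $2D'\sim D$, $D'^{2}=2$, and the $\psi$-preimage of a general line is an irreducible member of $|D'|$ (irreducible because the branch curve of $\psi$ is non-empty --- $\pp^{2}$ being simply connected --- and meets a general line in a reduced non-empty divisor), which is case (b). Otherwise $S$ is a smooth scroll, its ruling by lines is base point free, and it pulls back along $\varphi_{D}$ to a base point free pencil $|E|$ on $X$ with $E^{2}=0$ and $E\cdot D=2$; by Theorem~\ref{bl}(a), $E\sim aF$ with $|F|$ a free elliptic pencil and $a\in\{1,2\}$, and $a=2$ would give $F\cdot D=1$, whence a general $F_{0}\in|F|$ would map birationally onto a line and be rational, contradicting $g(F_{0})=1$. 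Hence $F\cdot D=2$, which is case (a).

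The hard part is the $(\Rightarrow)$ direction. It combines several standard but non-trivial inputs: the classification of surfaces of minimal degree; the structure of non-symplectic involutions on a K3 surface, used to force the image $S$ to be smooth and thereby exclude the singular cones; the non-emptiness and genericity of the branch curve of the degree-$2$ morphism $\varphi_{D}$, needed for the irreducibility claim in the Veronese case; and the fact that $\varphi_{D}$ may contract the $(-2)$-curves orthogonal to $D$, which is absorbed into the quotient map $X\to X/\iota$. Saint-Donat's original proof reaches the same conclusion more directly, by analysing the complete linear system cut out on a general member of $|D|$ and the $g^{1}_{2}$ it carries; the delicate bookkeeping in the degenerate configurations is where the argument does its real work.
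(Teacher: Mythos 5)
The paper does not prove this statement --- it is quoted from Saint-Donat together with its proof --- so I am judging your argument on its own terms. Your treatment of case (c) and of the ($\Leftarrow$) direction for $D^{2}\ge 4$ is correct. The problem is in the ($\Rightarrow$) direction, at the step ``hence $X/\iota$ is smooth, and therefore so is $S$ (it is $X/\iota$ with finitely many $(-1)$-curves contracted).'' The birational morphism $X/\iota\to S$ contracts the images of the $(-2)$-curves $\Gamma$ with $D\cdot\Gamma=0$, and there is no reason these should be $(-1)$-curves on $X/\iota$; contracting them can and does produce singular points of $S$. Concretely, $S$ can be a cone over a rational normal curve: when $D^{2}=4$ the image can be a quadric cone (Saint-Donat, Proposition 5.7 iii), and this paper itself uses exactly that in Remark \ref{F2} for the family $\mathcal F_4$, where $D=2F+2E_3+E_2+E_4$ maps onto a quadric cone; likewise for $\mathcal F_8$ the class $\BNef[3]=3\BNef[5]+2f_3+f_2$ maps $X$ two-to-one onto a cone over a twisted cubic in $\pp^4$ (Proposition \ref{gen v8}, via Saint-Donat 5.7 ii). So your dichotomy ``Veronese or smooth scroll'' omits a case that genuinely occurs, and your proof that hyperelliptic forces (a) or (b) is incomplete.

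The omission is repairable but requires work: in the cone case the ruling pulls back to a pencil whose members contain the preimage of the vertex as a fixed part, and one must show that its moving part (equivalently, the decomposition $D\sim aF+k\Gamma$ that Saint-Donat extracts) still produces a smooth elliptic curve $F$ with $F\cdot D=2$, landing in case (a). Either add that analysis, or follow Saint-Donat's original route, which avoids classifying the image altogether by restricting $|D|$ to a general (irreducible, genus $\ge 2$) member $C\in|D|$, observing that hyperellipticity of $\varphi_D$ forces the complete system $|D|_C|=|K_C|$ to be composed with the $g^{1}_{2}$ on $C$, and then lifting that $g^{1}_{2}$ to the elliptic pencil or the degree-two class $D'$ on $X$. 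The rest of your write-up (the biregularity of the covering involution, the exclusion of Enriques and symplectic quotients, the Veronese and smooth-scroll analyses) is sound.
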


\begin{remark}
For brevity, we will say that a divisor $D$ is base point free or
is hyperelliptic if the linear system $|D|$ is.
\end{remark}

\subsection{Vinberg's classification}
We now state the classification of K3 surfaces 
of Picard number four with finite automorphism group due to \`E.\,B. Vinberg  \cite[Theorem 1]{EV1}.
As usual we denote by $A_m, D_n, E_6, E_7, E_8$ the negative definite lattices that
correspond to the root systems of the same types ($m\geq 1, n\geq 4$), 
and by $U$ the lattice with Gram matrix 
$\left(\begin{array}{cc}
0 & 1\\
1 & 0
\end{array}\right)$. Moreover, $L_1\oplus L_2$ denotes the direct sum of the lattices $L_1$ and $L_2$, 
$nL$ ($n\in \nn$) denotes the direct sum of $n$ copies of the lattice $L$ and  $L(k)$ ($k\in \zz$)
is the lattice with the same underlying group as $L$, but with Gram matrix multiplied by $k$.


\begin{theorem}\label{vinberg}
\label{classification}
Let $X$ be an algebraic complex K3 surface with 
Picard number $\rho(X)=4$ and finite automorphism group. 
Then $\NS(X)$ is isometric 
to one of the following $14$
lattices:
$$
V_1=(8)\oplus 3A_1,
\qquad
V_2=(-4)\oplus (4)\oplus A_2,
\qquad
V_3=(4)\oplus A_3,
\qquad
V_{3+k}=U(k)\oplus 2A_1,\ k\in\{1,2,3,4\},
$$
$$
V_{7+k}=U(k)\oplus A_2, \ k \in\{1,2,3\},
\qquad
V_{11}=U(6)\oplus A_2,
\qquad
V_{12}=\left[
\begin{array}{rr}
0&-3
\\
-3&2
\end{array}
\right]
\oplus A_2,
$$
$$
V_{13}=\left[
\begin{array}{rrrr}
2&-1&-1&-1
\\
-1&-2&0&0
\\
-1&0&-2&0
\\
-1&0&0&-2
\end{array}\right],
\qquad
V_{14}=
\left[
\begin{array}{rrrr}
12&-2&0&0
\\
-2&-2&-1&0
\\
0&-1&-2&-1
\\
0&0&-1&-2
\end{array}
\right].
$$
\end{theorem}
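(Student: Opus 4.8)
The plan is to use the global Torelli theorem, as recalled in the introduction, to turn the statement into a purely lattice-theoretic classification, and then to invoke Vinberg's arithmetic theory of hyperbolic reflection groups.

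\emph{Reduction to lattices.} For a projective complex K3 surface $X$, the N\'eron--Severi lattice $\NS(X)$ is even of signature $(1,\rho(X)-1)$, the ample cone of $X$ is a fundamental chamber for $W^{(2)}(\NS(X))$ acting on the positive cone, and $\Aut(X)$ agrees with the group of Hodge isometries preserving the ample cone up to finite kernel and cokernel; hence, as stated in the introduction, $\Aut(X)$ is finite if and only if $[\,{\rm O}(\NS(X)):W^{(2)}(\NS(X))\,]<\infty$, i.e. $\NS(X)$ is a $2$-reflective hyperbolic lattice. Conversely, an even lattice of signature $(1,3)$ has rank $4\le 10$, so by Nikulin's embedding criterion it embeds primitively in the K3 lattice $U^{3}\oplus E_8^{2}$, and surjectivity of the period map realizes it as $\NS(X)$ for some K3 surface $X$ (automatically projective, since the hyperbolic signature forces a class of positive self-intersection). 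Thus the theorem is equivalent to: up to isometry there are exactly $14$ even $2$-reflective lattices of signature $(1,3)$, namely $V_1,\dots,V_{14}$.

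\emph{Each $V_i$ is $2$-reflective.} A hyperbolic lattice $S$ of rank $n$ is $2$-reflective precisely when the Coxeter polyhedron $\mathcal P\subset \mathbb{H}^{n-1}$ bounded by the walls of a fundamental chamber of $W^{(2)}(S)$ has finite volume. For each $V_i$ I would run Vinberg's algorithm: choose a controlling vector $v_0$ in the interior of the positive cone, adjoin the hyperplanes orthogonal to $(-2)$-classes in order of increasing hyperbolic distance from $v_0$, and stop when the Coxeter diagram of the accumulated normal vectors defines a finite-volume polyhedron, detected by Vinberg's combinatorial test (every ``vertex'' subdiagram of rank $n-1$ is elliptic, every one of rank $n-2$ is elliptic or parabolic, and no infinite-volume direction survives). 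In all $14$ cases this terminates, producing an explicit finite-volume polyhedron; its walls are the $(-2)$-curves of a general member of $\mathcal F_i$ and its shape gives the effective and nef cones, which is exactly the input of Theorem \ref{main-eff}.

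\emph{Completeness.} This is the main obstacle. One must bound the even $2$-reflective lattices of signature $(1,3)$ and then dispose of every candidate. Here I would use the a priori estimates of Nikulin and Vinberg: such lattices occur only in bounded rank, and in each fixed rank the determinant (equivalently, the order of the discriminant group together with the finitely many admissible discriminant forms) is effectively bounded. One then enumerates the even lattices of signature $(1,3)$ meeting these bounds, normalizes modulo the scalings $L\mapsto L(k)$ and isometry, and for each survivor either certifies non-reflectivity by a non-terminating run of Vinberg's algorithm (an infinite chain of mirrors, i.e. an infinite-volume ``funnel'' direction) or identifies it with one of the $V_i$. Carrying this out in rank $4$ — where the discriminant bound is large and the identification of isometry classes delicate — is precisely the content of \cite[Theorem 1]{EV1}; accordingly, in the paper we take Theorem \ref{classification} as a citation of Vinberg's result rather than reproving it.
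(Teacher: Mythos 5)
Your proposal is correct and ultimately takes the same route as the paper: Theorem \ref{classification} is stated there as a citation of Vinberg's classification \cite[Theorem 1]{EV1}, with no independent proof given, and your concluding paragraph does exactly that. The preceding sketch (Torelli reduction to $2$-reflectivity, realizability of rank-$4$ even hyperbolic lattices via Nikulin's embedding theorem and surjectivity of the period map, and Vinberg's algorithm for the individual $V_i$) is an accurate outline of the argument you are deferring to.
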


\subsection{Effective and nef cones}\label{eff-nef}
Let $X$ be a K3 surface and $\Eff(X)\subseteq \Cl(X)_{\qq}\cong {\rm NS}(X)_{\qq}$ be its {\em effective cone}.
If $\rho(X)\geq 3$ 
and $\Eff(X)$ is polyhedral, then its extremal rays are generated by classes of $(-2)$-curves (see \cite[Proposition 2.13]{A.H.L}).
 In order to identify the classes of all $(-2)$-curves we use the following algorithm,
 known as  ``Vinberg's algorithm'' \cite{Vinberg}:

\begin{enumerate}[$\bullet$]
\item Fix a class $\alpha\in \Cl(X)$ with $\alpha^2>0$.
\item Find all classes $w\in \Cl(X)\cap \alpha^{\perp}$ with self-intersection $-2$ 
(there are finitely many of them since the restriction of $Q$ to $\alpha^{\perp}$  
is negative definite) and let $L$ be a list of such classes, which is a root system.
\item If $L$ is not empty, fix a set $L^+\subseteq L$ of positive roots  as follows:
choose randomly an integral combination $H$ of  the vectors in $L$
having non zero intersection with all of them and let $L^+$ be the set of vectors in $L$
having positive intersection with $H$.
\item  Construct the list $R_0$ of simple roots in $L^+$ inductively as follows: 
let $L^+_0\subseteq L^+$ be the set of vectors having minimal intersection $m_0$ with $H$ 
and, once $L^+_i$ is given for some $i\geq 0$, define $L^{+}_{i+1}\subseteq L^+$ as the  
set of vectors having intersection $m_0+i+1$ with $H$ and non-negative intersection with all vectors in $L^+_k$ for $0\leq k\leq i$.
The process stops when $m_0+n=\max\{v\cdot H: v \in L^+\}$ and the set of simple roots in $L^+$ is 
$R_0:=\cup_{0\leq i\leq n}L^+_{i}$.
\item Construct a set of fundamental roots for $\Cl(X)$ inductively as follows:  let $R_0$ be as in the previous item ($=\emptyset$ if $L$ is empty) 
and define $R_{i+1}$ as the union of $R_i$ with the set of classes $w\in \Cl(X)$ such that $w^2=-2$, $w\cdot \alpha=i+1$ and 
having non-negative intersection with all the elements of $R_i$.
\item The set $R_n$ is a set of fundamental roots of $\Cl(X)$ if the following property holds for the 
convex polyhedral cone $\mathcal C$ generated by the vectors in $R_n$: the intersection matrix of the vectors 
generating any facet of $\mathcal C$ is negative semidefinite. 
\end{enumerate}


We recall that the {\em nef cone} $\Nef(X)\subseteq \Cl(X)_{\qq}$ 
of a smooth surface is the dual of its effective cone with respect 
to the intersection form.
In what follows we will denote by $\BEff(X)$ and $\BNef(X)$ 
the {\em Hilbert bases} of $\Eff(X)$ and $\Nef(X)$, i.e. 
the unique minimal generating sets of $\Eff(X)\cap \Cl(X)$ 
and $\Nef(X)\cap \Cl(X)$ (see for example \cite[Definition 7.17]{M.S}). 

The previous algorithm has been implemented in a Magma \cite{B.C.P} program 
which is described in \cite{A.C.L} and included as ancillary file in the arXiv version of the same paper:
 \begin{center}
 \href{https://arxiv.org/abs/1909.01267}{https://arxiv.org/abs/1909.01267}.
\end{center}

\subsection{Cox rings}\label{cox}
We now recall some preliminary facts on Cox rings of surfaces, based on \cite{A.D.H.L}.
The \emph{Cox ring} of a smooth projective complex variety $X$ 
with finitely generated and free divisor class group $\Cl(X)$ is defined as
\[
R(X):=\bigoplus_{D\in K}\Gamma(X,\oo_X(D)),
\]
where $K\subseteq \rm{WDiv}(X)$ is a subgroup of the group 
of Weil divisors such that $K\to \Cl(X),\ D\mapsto [D]$ 
is an isomorphism. 
 Observe that $R(X)$ is a $K$-graded algebra over $\cc$. 
 The variety $X$ is called {\em Mori dream space} if $R(X)$ is a finitely generated 
 complex algebra.
 
 To simplify notation, given $w\in \Cl(X)$, 
 we will denote by $R(X)_{w}$ the vector space 
 $\Gamma(X,\oo_X(D))$, where $D\in K$ has  $[D]=w$.
 Moreover, in this case we will say that $f\in \Gamma(X,\oo_X(D))$ 
 is homogeneous of degree $w$.
Given a set of homogenous generators $\{f_i: i\in I\}$ of $R(X)$,
 it can be easily proved that their degrees $\{e_i: i\in I\}\subset K$ 
 generate the effective cone of $X$  (see \cite[Proposition 2.1]{A.H.L}).
 In particular, the effective cone of a Mori dream space is polyhedral. 
 Moreover, if $D$ is an effective divisor whose class 
 belongs to the Hilbert basis of the effective cone,
then any generating set of $R(X)$ contains en element of degree $[D]$.


The following theorem \cite[Theorem 5.1.5.1]{A.D.H.L}, together with the classification 
of K3 surfaces with finite automorphism group (see, \cite{VN1,VN,VN2,P.S,EV1}) 
provides a complete classification of K3 surfaces with finitely generated Cox ring.

 \begin{theorem}\label{Mori dream}
Let $X$ be an algebraic K3 surface. Then the following statements are equivalent.
\begin{enumerate}
\item $X$ is a Mori dream surface.
\item The effective cone $\Eff(X)\subseteq \Cl_{\qq}(X)$ is polyhedral.
\item The automorphism group of $X$ is finite.
\end{enumerate}
\end{theorem}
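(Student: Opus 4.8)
The plan is to establish the cycle of implications $(1)\Rightarrow(2)\Rightarrow(3)\Rightarrow(1)$. The implication $(1)\Rightarrow(2)$ will be immediate from the general facts recalled in Subsection~\ref{cox}: the degrees of a finite homogeneous generating set of $R(X)$ generate $\Eff(X)$, so the effective cone is polyhedral. For $(2)\Rightarrow(3)$ I would argue as follows. The group $\Aut(X)$ acts on $\Pic(X)=\NS(X)$ preserving $\Eff(X)$, hence permutes the finitely many extremal rays of this polyhedral cone. If $g\in\Aut(X)$ fixes every extremal ray, then it sends the primitive generator $v$ of each such ray to a positive rational multiple of itself; since $g$ is a lattice isometry and $v$ is primitive, that multiple equals $1$, and as the generators $v$ span $\NS(X)_\qq$ the isometry $g$ is the identity on $\NS(X)$. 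Then $g^*\Osh_X(H)\cong\Osh_X(H)$ for an ample $H$, so $g$ lies in $\Aut(X,\Osh_X(H))$, a linear algebraic group with trivial Lie algebra $H^0(X,T_X)=0$, hence finite. Therefore the image of $\Aut(X)$ in $O(\NS(X))$ embeds into the finite symmetric group on the extremal rays and has finite kernel, so $\Aut(X)$ is finite.

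For $(3)\Rightarrow(1)$ I would proceed in two stages. The first stage shows that $\Eff(X)$ is polyhedral. By the global Torelli theorem recalled in the Introduction, finiteness of $\Aut(X)$ is equivalent to finiteness of the index $[O(\NS(X)):W^{(2)}(\NS(X))]$. The nef cone of $X$, intersected with the positive cone, is (the closure of) a fundamental chamber for the reflection group $W^{(2)}(\NS(X))$ acting on the positive cone, and as such it is an acute-angled convex polyhedron in the hyperbolic space $\mathbb H^{\rho(X)-1}$; finite index in the arithmetic group $O(\NS(X))$ forces this chamber to have finite volume, and by Vinberg's theory of hyperbolic reflection groups a finite-volume acute-angled polyhedron has only finitely many facets. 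Hence $\Nef(X)$, and therefore its dual $\Eff(X)$, is rational polyhedral. (Alternatively, one may invoke Sterk's theorem that $\Aut(X)$ admits a rational polyhedral fundamental domain for its action on $\Nef(X)$, which for finite $\Aut(X)$ directly makes $\Nef(X)$ rational polyhedral.)

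The second stage uses that every nef divisor $D$ on a K3 surface is semiample: if $D^2>0$ then $D$ is big and nef, so $mD$ is base point free for $m\gg0$ (base point free theorem, using $K_X=0$); if $D\ne 0$ and $D^2=0$ then Riemann--Roch together with the fact that $-D$ is not effective shows $D$ is effective, and Theorem~\ref{bl}(a) gives $D\sim aF$ with $|F|$ a free elliptic pencil, so $|D|$ is itself base point free; and $D=0$ is trivial. Thus $X$ is a smooth projective surface with $\Eff(X)$ polyhedral and with each of its finitely many extremal nef classes semiample, and the standard criterion for Mori dream surfaces then gives that $R(X)$ is finitely generated, i.e. that $X$ is a Mori dream surface.

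I expect the main obstacle to be the first stage of $(3)\Rightarrow(1)$: deducing polyhedrality of $\Nef(X)$ from finiteness of $\Aut(X)$. This is exactly where the input from the theory of hyperbolic reflection groups is needed --- concretely, Vinberg's finiteness result for Coxeter polytopes of finite covolume (the same mechanism that makes Vinberg's algorithm of Subsection~\ref{eff-nef} terminate), or, packaged differently, Sterk's finiteness theorem. Everything else reduces to elementary arguments about lattice isometries, the structure of linear systems on K3 surfaces recalled above, and the general Mori dream space formalism.
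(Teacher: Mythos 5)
Your argument is correct, but it is worth noting that the paper itself does not prove this statement: Theorem~\ref{Mori dream} is quoted verbatim from \cite[Theorem 5.1.5.1]{A.D.H.L} (see also \cite[Theorems 2.7 and 2.11]{A.H.L}), so what you have written is a reconstruction of the proof that lives in those references rather than an alternative to anything in this paper. Your cycle of implications is the standard one and each step is sound: $(1)\Rightarrow(2)$ is \cite[Proposition 2.1]{A.H.L} as recalled in Subsection~\ref{cox}; in $(2)\Rightarrow(3)$ you should just make explicit that $\Eff(X)$ is full-dimensional and strongly convex (it contains the open ample cone and no line, since $D$ and $-D$ cannot both be effective and nonzero), so its extremal rays do span $\NS(X)_{\qq}$ and your permutation argument applies, while the finiteness of the kernel of $\Aut(X)\to {\rm O}(\NS(X))$ via $h^0(X,T_X)=h^0(X,\Omega^1_X)=0$ is fine. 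The two places where you are genuinely outsourcing work are exactly the ones you identify: (a) the passage from finiteness of ${\rm O}(\NS(X))/W^{(2)}(\NS(X))$ to rational polyhedrality of the chamber $\Nef(X)$, which needs Borel--Harish-Chandra (finite index in an arithmetic lattice gives finite covolume) plus Vinberg's theorem that a finite-volume convex polyhedron in hyperbolic space has finitely many facets (with the low-rank cases $\rho\le 2$ handled by hand: there isotropy or an infinite dihedral $W^{(2)}$ forces the boundary rays to be rational); and (b) the Hu--Keel-type criterion that a smooth projective surface with $h^1(\Osh_X)=0$, polyhedral effective cone and every nef class semiample is a Mori dream surface, which is precisely the content of the cited \cite[Theorem 2.7]{A.H.L}. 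Your verification that every nef divisor on a K3 surface is semiample, using Riemann--Roch and Theorem~\ref{bl}, is the correct and standard way to feed the K3 hypothesis into that criterion. In short: the proposal is a faithful and correct account of the proof the paper delegates to the literature, provided the two external inputs above are cited rather than re-proved.
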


In \cite{A.C.L} we proved the following result on 
the degrees of a generating 
set of the Cox ring of a K3 surface.
We say that $R(X)$ {\em has a generator in degree} $w$ 
when each minimal set of homogeneous generators 
of $R(X)$ contains an element of degree $w$.

\begin{theorem}\label{gen}
$($\cite[Theorem 2.7]{A.C.L}$)$
Let $X$ be a smooth projective K3 surface over $\cc$.
Then the degrees of a minimal set of generators of its Cox ring $R(X)$ are either  
\begin{enumerate}
\item classes of $(-2)$-curves, 
\item  sums of at most three elements 
 of the Hilbert basis of the nef cone (allowing repetitions), 
 \item classes of divisors of the form $2(F+F')$, where $F,F'$ are smooth elliptic curves with $F\cdot F'=2$.
 \end{enumerate}
 \end{theorem}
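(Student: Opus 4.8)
The plan is to decide, for each $w\in\Cl(X)$, whether the multiplication map
\[
\mu_w\colon\ \bigoplus_{w=w_1+w_2}\ R(X)_{w_1}\otimes R(X)_{w_2}\ \longrightarrow\ R(X)_w,
\]
with the sum over the decompositions of $w$ into two nonzero effective classes, is surjective; indeed $R(X)$ has a generator in degree $w$ precisely when $\mu_w$ is not surjective, and any generator degree is effective by \cite[Proposition 2.1]{A.H.L}. Fixing such a $w$, I would run through the cases according to whether $w$ is not nef, is nef with $|w|$ having a fixed part, is nef and not big, or is big with $|w|$ base point free.

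If $w$ is effective but not nef, choose a $(-2)$-curve $\Gamma$ with $w\cdot\Gamma<0$ (it exists since a curve of negative self-intersection on a K3 is a $(-2)$-curve and must occur in the support of any effective divisor of class $w$); then $\Gamma$ is a fixed component of $|D|$ whenever $[D]=w$, so multiplication by the section $x_\Gamma$ cutting out $\Gamma$ gives an isomorphism $R(X)_{w-\Gamma}\xrightarrow{\ \sim\ }R(X)_w$. When $w\neq\Gamma$ the class $w-\Gamma$ is nonzero and effective and every element of $R(X)_w$ is $x_\Gamma$ times one of $R(X)_{w-\Gamma}$, so $\mu_w$ is onto; hence the non-nef generator degrees are exactly the $(-2)$-classes, which is (1). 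The same device, together with Theorem \ref{bl}(b), disposes of every nef $w$ for which $|w|$ has a fixed part: such a $w$ equals $aF+E$ with $E$ a $(-2)$-curve and $F$ a smooth elliptic curve with $F\cdot E=1$, and $R(X)_{w-E}\xrightarrow{\ \sim\ }R(X)_w$ with $w-E=aF\neq 0$. If $w$ is nef but not big, Theorem \ref{bl}(a) gives $w=aF$ with $|F|$ a free elliptic pencil, so $H^0(aF)=\Sym^aH^0(F)$ and $\mu_w$ is onto for $a\geq 2$, while $F$ itself belongs to $\BNef(X)$, because a primitive isotropic nef class cannot be written as a nontrivial sum of nef classes (two isotropic vectors of a hyperbolic lattice with vanishing product are proportional). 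So in all these regimes $w$ is of type (1) or (2).

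The substantive case is $w$ big with $|w|$ base point free, where by Theorem \ref{bl}(c) the morphism $\varphi_w$ is birational or hyperelliptic. The engine I would use is a surjectivity statement for multiplication maps: if $D_1$ is base point free, big and nef and \emph{not} hyperelliptic, and $D_2$ is any nef divisor, then $H^0(D_1)\otimes H^0(D_2)\to H^0(D_1+D_2)$ is surjective. This is proved by restricting to a general $C\in|D_1|$, which is then a smooth irreducible non-hyperelliptic curve of genus $1+D_1^2/2$: using $H^1(\oo_X)=0$ and $H^1(\oo_X(D_2))=0$, one reduces, as in Saint-Donat's projective-normality argument, to surjectivity of a multiplication map on $C$ involving $K_C$, which follows from Max Noether's theorem and the base-point-free pencil trick. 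Granting this, write a nef generator degree $w$ as $\sum a_ih_i$ with $h_i\in\BNef(X)$; if $\sum a_i\geq 2$ and $w$ admits a splitting $w=D_1+D_2$ into two nonzero nef classes with $D_1$ big, base point free and non-hyperelliptic, then $\mu_w$ is onto and $w$ is not a generator degree. Analysing when no such splitting exists — when every sub-sum of the $h_i$ is isotropic, has a fixed part, or is hyperelliptic — then forces either $\sum a_i\leq 3$, which is (2), or else (invoking Theorem \ref{hyp} to enumerate the hyperelliptic classes and decomposing summands with a fixed part through their fixed parts) that $w=2(F+F')$ with $F,F'$ smooth elliptic curves and $F\cdot F'=2$: here $\varphi_{F+F'}$ is a double cover of the smooth quadric $\pp^1\times\pp^1\subseteq\pp^3$, the subring generated by $H^0(F+F')$ misses the anti-invariant section of $H^0(2(F+F'))$, and no splitting of $2(F+F')$ into two nonzero nef classes avoids a hyperelliptic or non-generically-finite summand — so a generator in degree $2(F+F')$ is forced, which is (3). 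For every other hyperelliptic class ($D^2=2$, or $D\sim 2D'$ with $(D')^2=2$, or type \ref{hyp}(a) with $D^2\geq 6$, or the quadric-cone subcase of $D^2=4$) one checks directly that the relevant degree $D$, respectively $2D$, is a sum of at most three elements of $\BNef(X)$ or else splits favourably, contributing only degrees of type (2).

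I expect the main obstacle to be precisely this last piece of bookkeeping: verifying uniformly, over all K3 surfaces, that whenever a nef class $w$ that is a sum of at least four elements of $\BNef(X)$ is not of the form $2(F+F')$ with $F\cdot F'=2$, some splitting of $w$ into two nonzero nef classes has a big, base point free, non-hyperelliptic summand. This is where the geometry of $\Nef(X)$, Riemann--Roch on $X$, and the classification of hyperelliptic linear systems in Theorem \ref{hyp} must be combined; everything else is either the fixed-component trick or an application of the surjectivity lemma.
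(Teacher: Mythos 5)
First, a point of comparison: the paper does not prove Theorem \ref{gen} --- it is imported verbatim from \cite[Theorem 2.7]{A.C.L} --- so there is no internal argument to measure you against. That said, your case division (non-nef degrees reduced through a $(-2)$-curve in the fixed part; nef degrees with fixed part reduced through the $(-2)$-component $E$ of Theorem \ref{bl}(b); isotropic nef degrees handled via $H^0(aF)=\Sym^aH^0(F)$ together with the observation that a primitive isotropic nef class must lie in $\BNef(X)$; big base-point-free degrees attacked by splitting into two nef summands and a multiplication-map surjectivity criterion, with $2(F+F')$, $F\cdot F'=2$, isolated as the unique obstruction) is exactly the architecture of the cited proof, and your check that $2(F+F')$ is genuinely exceptional --- every big summand of every splitting is hyperelliptic, and the anti-invariant section over the quadric is missed --- is sound.

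Two gaps remain. The surjectivity lemma you propose (``$D_1$ big, nef, base point free, non-hyperelliptic and $D_2$ nef imply $H^0(D_1)\otimes H^0(D_2)\to H^0(D_1+D_2)$ onto'') is under-hypothesized: your own sketch already invokes $h^1(\oo_X(D_2))=0$, which fails for $D_2=aF$ with $a\geq 2$, and the restriction-to-$C$ argument also needs $H^0(D_2)\to H^0(D_2|_C)$ to be onto, i.e.\ $h^1(D_2-D_1)=0$. The criterion that actually carries the argument is the one the paper records as Proposition \ref{ottem} (together with Theorem \ref{teokoszul}), whose hypotheses $h^1(A-B)=h^1(A)=0$ and $h^2(A-2B)=0$ are exactly what one must arrange when choosing the splitting --- and exactly what cannot be arranged for $2(F+F')$, as your own computation with $h^1(2F)$ and $h^0(F+F')$ shows. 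The second, and decisive, gap is the one you flag yourself: the verification that every nef class which is a sum of at least four elements of $\BNef(X)$ and is not of the form $2(F+F')$ admits a splitting satisfying the criterion. This is not bookkeeping appended to a finished proof; it is the substance of the theorem --- it is where Theorem \ref{hyp} is used to control which summands can be hyperelliptic (e.g.\ $3h$ is never hyperelliptic when $h^2=2$, since $F\cdot 3h\in 3\zz$ cannot equal $2$, so $4h=3h+h$ splits well), and where the three output types are finally separated. As written, the proposal is a correct and well-motivated plan whose main step is left unexecuted.
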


Moreover, we developed different techniques to 
show that $R(X)$ has no generators in a certain degree 
(see \cite[Corollary 2.2, Corollary 2.4]{A.C.L}).  
Given $f\in \cc(X)^*$, we denote by $\div_E(f)$ the divisor $\div(f)+E$.

\begin{theorem}\label{teokoszul}
 Let $X$ be a smooth projective variety over $\cc$, $E_1,E_2,E_3$ 
 be effective divisors of $X$, $f_{i}\in H^0(X, E_i)$, $i=1,2,3$ 
 and $D\in {\rm WDiv}(X)$.
 \begin{enumerate}
 \item If $\cap_{i=1}^2\div_{E_i}(f_i)=\emptyset$ and  $h^1(X,D-E_1-E_2)=0$, 
then the following morphism is surjective
 \[
 H^0(X,D-E_1)\oplus H^0(X,D-E_2)\to H^0(X,D),\quad (g_1, g_2)\mapsto f_1g_1 + f_2g_2.
 \]
 \item If $\cap_{i=1}^3\div_{E_i}(f_i)=\emptyset$,  $h^1(X, D-E_i-E_j)=0$ 
 for all distinct $i,j\in \{1,2,3\}$ and $h^2(X, D-E_1-E_2-E_3)=0$,
 then the following morphism is surjective
\[
\bigoplus_{i=1}^3 H^0(X, D-E_i)\to H^0(X,D),\ (g_1,g_2,g_3)\mapsto f_1g_1+f_2g_2+f_3g_3.
\]
\end{enumerate}
\end{theorem}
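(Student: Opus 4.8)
The plan is to deduce both statements from the exactness of a twisted Koszul complex followed by a cohomology chase. For part (1), let $s_i\colon\oo_X(-E_i)\to\oo_X$ denote multiplication by $f_i$ and consider the Koszul complex
\[
0\longrightarrow\oo_X(-E_1-E_2)\xrightarrow{(-s_2,\,s_1)}\oo_X(-E_1)\oplus\oo_X(-E_2)\xrightarrow{(s_1,\,s_2)}\oo_X\longrightarrow 0.
\]
The first step is to check that this is exact. Exactness is local, and the hypothesis $\div_{E_1}(f_1)\cap\div_{E_2}(f_2)=\emptyset$ says precisely that at every point $x\in X$ at least one of the $f_i$ is a unit in $\oo_{X,x}$, in which case the displayed length-two complex of stalks is immediately seen to be exact. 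Tensoring with $\oo_X(D)$ preserves exactness, yielding a short exact sequence of sheaves whose long exact cohomology sequence contains
\[
H^0(X,\oo_X(D-E_1))\oplus H^0(X,\oo_X(D-E_2))\longrightarrow H^0(X,\oo_X(D))\longrightarrow H^1(X,\oo_X(D-E_1-E_2)),
\]
and the left-hand map is exactly the one in the statement, namely the map induced on global sections by tensoring $(s_1,s_2)$ with $\oo_X(D)$, i.e. $(g_1,g_2)\mapsto f_1g_1+f_2g_2$. The hypothesis $h^1(X,D-E_1-E_2)=0$ then forces surjectivity.

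For part (2) I would run the same argument one step longer, using the length-three Koszul complex
\[
0\to\oo_X(-E_1-E_2-E_3)\to\bigoplus_{i<j}\oo_X(-E_i-E_j)\to\bigoplus_{i=1}^3\oo_X(-E_i)\to\oo_X\to 0,
\]
which is again exact because $\bigcap_{i=1}^3\div_{E_i}(f_i)=\emptyset$ means some $f_i$ is a unit at each point of $X$ and a Koszul complex with a unit entry is exact. After tensoring with $\oo_X(D)$, let $Q$ be the image of the middle map, equivalently the kernel of $\bigoplus_i\oo_X(D-E_i)\to\oo_X(D)$; then the four-term sequence splits into
\[
0\to\oo_X(D-E_1-E_2-E_3)\to\bigoplus_{i<j}\oo_X(D-E_i-E_j)\to Q\to 0,
\]
\[
0\to Q\to\bigoplus_{i=1}^3\oo_X(D-E_i)\to\oo_X(D)\to 0.
\]
The long exact sequence of the second one shows that $\bigoplus_i H^0(X,\oo_X(D-E_i))\to H^0(X,\oo_X(D))$ is surjective once $H^1(X,Q)=0$, while the long exact sequence of the first one sandwiches $H^1(X,Q)$ between $\bigoplus_{i<j}H^1(X,\oo_X(D-E_i-E_j))$ and $H^2(X,\oo_X(D-E_1-E_2-E_3))$; both vanish by hypothesis, so $H^1(X,Q)=0$. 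As in part (1), the resulting map on global sections is exactly $(g_1,g_2,g_3)\mapsto f_1g_1+f_2g_2+f_3g_3$, since the last Koszul differential carries no signs.

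The only point needing genuine care is the exactness of the (twisted) Koszul complexes; everything else is a formal diagram chase. I would isolate it as a lemma: if $L_1,\dots,L_n$ are line bundles on a variety $Y$ and $t_i\in H^0(Y,L_i)$ have empty common zero locus, then the associated Koszul complex is exact, because each of its homology sheaves is annihilated by the ideal generated by the $t_i$ and hence supported on $\bigcap_i\{t_i=0\}=\emptyset$; equivalently one covers $Y$ by the opens where some $t_i$ is nowhere vanishing and uses the standard contracting homotopy there. Applied with $Y=X$, $n\in\{2,3\}$, $L_i=\oo_X(E_i)$, $t_i=f_i$, this gives the exact sequences above. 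Two bookkeeping remarks round out the argument: that each $s_i$ is injective as a map of sheaves (the $f_i$ are nonzero sections on the integral scheme $X$), which is what puts the left-most terms of the Koszul complexes in the form written; and the explicit identification of the final Koszul differential with $(g_i)_i\mapsto\sum_i f_ig_i$, which is signless.
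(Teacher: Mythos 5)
Your proof is correct, and it is essentially the intended argument: the paper imports this statement from \cite{A.C.L} (Corollaries 2.2 and 2.4 there) rather than proving it, but the label ``teokoszul'' and the setup make clear that the proof is exactly the Koszul-complex computation you give — exactness of the twisted Koszul complex off the empty common zero locus, splitting the four-term sequence at the syzygy sheaf $Q$, and killing $H^1(Q)$ via the stated $h^1$ and $h^2$ vanishings. The points you flag (local exactness of the Koszul complex, injectivity of the first map on the integral scheme $X$, and the identification of the last differential with $(g_i)\mapsto\sum f_ig_i$) are precisely the ones that need checking, and your treatment of them is sound.
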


The following result  shows that $R(X)$ is not generated in degrees which are 
sums of a very ample class and the class of an elliptic fibration under certain conditions.

 \begin{proposition}\label{va}
  $($\cite[Lemma 2.8]{A.C.L}$)$
Let $X$ be a K3 surface and $D=F+D'$ a nef divisor, where $F$ is nef with $F^2=0$ 
and $D'$ is very ample. Assume that $F\sim E_1+E_2$, where $E_1,E_2$ are $(-2)$-curves and 
that  the image of the natural map
\[
\phi: H^0(D-E_1)\oplus H^0(D-E_2)\to H^0(D)
\]
has codimension two. Then $R(X)$ has no generator in degree $[D]$.
\end{proposition}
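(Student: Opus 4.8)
The plan is to prove directly that $H^0(D)$ is spanned by products of homogeneous elements of degree strictly smaller than $[D]$; by the notion of ``generator in degree $w$'' recalled above, this is exactly the assertion. Write $f_i\in H^0(X,\oo_X(E_i))$ for the section cutting out $E_i$. Using $F\sim E_1+E_2$ we have $D\sim E_1+(E_2+D')\sim E_2+(E_1+D')\sim F+D'$, so each of the subspaces $f_1H^0(D-E_1)$, $f_2H^0(D-E_2)$ and $H^0(F)\cdot H^0(D')$ of $H^0(D)$ consists of such products, their factor degrees being $([E_1],[D-E_1])$, $([E_2],[D-E_2])$ and $([F],[D'])$ respectively (in each pair both summands are nonzero and effective). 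Hence it suffices to show that $\operatorname{Im}(\phi)+H^0(F)\cdot H^0(D')=H^0(D)$, where $\operatorname{Im}(\phi)=f_1H^0(D-E_1)+f_2H^0(D-E_2)$.

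The next step is to understand $F$ and $H^0(\oo_X(F))$. From $F^2=0$ and $F\sim E_1+E_2$ one gets $E_1\cdot E_2=2$; by Theorem~\ref{bl}(a), $F\sim aF_0$ for a free elliptic pencil $|F_0|$, and since $E_1\cdot E_2=2>0$ the curves $E_1,E_2$, being orthogonal to $F_0$, lie in one common fibre $\Phi$ of $\varphi_{F_0}$, so that $E_1+E_2\le\Phi\sim aF_0$ forces $a=1$ and $\Phi=E_1+E_2$. Thus $F=E_1+E_2$ is a reducible fibre of $\varphi_F\colon X\to\pp^1$ and $h^0(\oo_X(F))=2$, say $H^0(\oo_X(F))=\langle f_1f_2,\ t\rangle$ with $\operatorname{div}(t)=F_t$ a fibre distinct from $F$, which (as $|F|$ is base point free) we may choose so that $F_t$ avoids the finite set $Z:=E_1\cap E_2$, a length‑$2$ local complete intersection. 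Since $f_1f_2H^0(D')\subseteq f_1H^0(D-E_1)\subseteq\operatorname{Im}(\phi)$, the goal reduces to proving $\operatorname{Im}(\phi)+tH^0(D')=H^0(D)$.

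The core of the argument is a Koszul computation identifying $\operatorname{Im}(\phi)$. The Koszul complex of the regular section $(f_1,f_2)$ of $\oo_X(E_1)\oplus\oo_X(E_2)$, twisted by $\oo_X(D)$, is the exact sequence
\[
0\longrightarrow \oo_X(D')\longrightarrow \oo_X(D-E_1)\oplus\oo_X(D-E_2)\longrightarrow \mathcal{I}_Z\otimes\oo_X(D)\longrightarrow 0,
\]
where the second nontrivial arrow is $(g_1,g_2)\mapsto f_1g_1+f_2g_2$. Taking cohomology and using $H^1(X,\oo_X(D'))=0$ (valid since $D'$ is ample) gives $\operatorname{Im}(\phi)=H^0\!\big(X,\mathcal{I}_Z\otimes\oo_X(D)\big)=\ker\!\big(H^0(D)\to H^0(\oo_Z\otimes\oo_X(D))\big)$, a subspace of codimension at most $2$; the hypothesis forces the codimension to be exactly $2$, i.e.\ the restriction $H^0(D)\to H^0(\oo_Z\otimes\oo_X(D))\cong\cc^2$ is surjective. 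Finally, since $F_t\cap Z=\varnothing$ the section $t$ is invertible near $Z$, so multiplication by $t$ identifies the composite $H^0(D')\to H^0(D)\to H^0(\oo_Z\otimes\oo_X(D))$, up to an automorphism of the target, with the restriction $H^0(X,\oo_X(D'))\to H^0(\oo_Z\otimes\oo_X(D'))$, which is onto because $D'$ is very ample and $Z$ has length $2$. Therefore $tH^0(D')$ maps onto $H^0(\oo_Z\otimes\oo_X(D))$, and since $\operatorname{Im}(\phi)$ is the kernel of that restriction we conclude $\operatorname{Im}(\phi)+tH^0(D')=H^0(D)$, which by the first paragraph means $R(X)$ has no generator in degree $[D]$.

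The step I expect to demand the most care is the Koszul identification $\operatorname{Im}(\phi)=H^0(\mathcal{I}_Z\otimes\oo_X(D))$ — in particular checking that $f_1,f_2$ form a regular sequence, so that the displayed complex is genuinely exact — together with pinning down that $F$ must be a single reducible fibre (type $I_2$ or $III$) rather than a higher multiple of an elliptic pencil, which is precisely what guarantees that a general member of $|F|$ misses $Z$. Once these two points are secured, the very ampleness of $D'$ (which separates the length‑$2$ subscheme $Z$) supplies the two missing dimensions and finishes the proof; one could equally replace the sheaf‑theoretic language by an explicit count of $h^0$'s, but the Koszul formulation keeps the role of each hypothesis transparent.
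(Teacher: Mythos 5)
Your proof is correct. Note that the paper does not actually prove this statement; it is imported verbatim from \cite[Lemma 2.8]{A.C.L}, so there is no in-paper argument to compare against, but your argument is the natural one and every step checks out: the Koszul complex of the regular section $(f_1,f_2)$, twisted by $D$ and combined with $h^1(D')=0$, identifies $\operatorname{Im}(\phi)$ with $H^0(\mathcal I_Z\otimes\oo_X(D))=\ker\bigl(H^0(D)\to H^0(\oo_Z\otimes\oo_X(D))\bigr)$, the codimension-two hypothesis then says that evaluation at $Z$ is surjective, and $t\,H^0(D')$ hits all of $H^0(\oo_Z\otimes\oo_X(D))$ because a second member of the pencil $|F|$ is disjoint from $Z$ and a very ample $D'$ separates the length-two scheme $Z$. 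The two points you flagged as delicate are indeed the ones that need care, and you resolve both correctly: $E_1\cdot E_2=2$ forces $E_1+E_2$ to be a single fibre (so $a=1$, $h^0(F)=2$, and any other member of the pencil misses $Z=E_1\cap E_2$), and $(f_1,f_2)$ is a regular section since $E_1\neq E_2$ are irreducible, so $Z$ is zero-dimensional; the final reduction to the graded Nakayama criterion is legitimate because the effective cone is pointed, so no minimal generating set needs an element in degree $[D]$.
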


The following is a result by Ottem  \cite[Proposition 2.2]{JCO}.

\begin{proposition}\label{ottem}
Let $X$ be a smooth projective K3 surface. Let $A$ and $B$ be nef divisors on $X$ such that $|B|$ is base point free. Then the multiplication map
\[
H^0(X,A)\otimes H^0(X,B)\to H^0(X,A+B)
\]
is surjective if $h^1(X,A-B)=h^1(X,A)=0$ and $h^2(X,A-2B)=0$.
 \end{proposition}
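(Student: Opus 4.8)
The plan is to reduce the statement to the analogous surjectivity statement on a general member of $|B|$, which is a curve. We may assume $B\neq 0$, since otherwise the multiplication map is an isomorphism. As $|B|$ is base point free, a general member $D\in |B|$ is smooth by Bertini's theorem (we work over $\cc$), though $D$ may be disconnected; let $s_D\in H^0(X,\mathcal{O}_X(B))$ be a section cutting out $D$. The idea is to combine four facts, each extracted from a long exact cohomology sequence: (i) the restriction $H^0(X,A+B)\to H^0(D,(A+B)|_D)$ is surjective with kernel $s_D\cdot H^0(X,A)$, which comes from $0\to \mathcal{O}_X(A)\to \mathcal{O}_X(A+B)\to \mathcal{O}_D((A+B)|_D)\to 0$ (the first map being multiplication by $s_D$) together with $h^1(X,A)=0$; (ii) the restriction $H^0(X,A)\to H^0(D,A|_D)$ is surjective, from $0\to \mathcal{O}_X(A-B)\to \mathcal{O}_X(A)\to \mathcal{O}_D(A|_D)\to 0$ and $h^1(X,A-B)=0$; (iii) the restriction $H^0(X,B)\to H^0(D,B|_D)$ is surjective, from $0\to \mathcal{O}_X\to \mathcal{O}_X(B)\to \mathcal{O}_D(B|_D)\to 0$ and $h^1(X,\mathcal{O}_X)=0$, which holds on every K3 surface; and (iv) the multiplication map $H^0(D,A|_D)\otimes H^0(D,B|_D)\to H^0(D,(A+B)|_D)$ on the curve $D$ is surjective. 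Granting these, a diagram chase finishes the proof: given $\sigma\in H^0(X,A+B)$, use (iv) to write its restriction to $D$ as $\sum_i\bar a_i\bar b_i$, lift the factors to $a_i\in H^0(X,A)$ and $b_i\in H^0(X,B)$ via (ii) and (iii), observe that $\sigma-\sum_i a_i b_i$ restricts to zero on $D$ and hence equals $s_D\cdot c$ for some $c\in H^0(X,A)$ by (i), so that $\sigma=\sum_i a_i b_i+c\,s_D$ lies in the image of the multiplication map.

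The heart of the argument is therefore (iv). Since $D$ is a disjoint union of smooth curves $D_j$, the spaces $H^0(D,-)$ split as direct sums over the $D_j$ and the multiplication map respects this splitting, so it is enough to prove surjectivity on each $D_j$. The linear system cut out on $D_j$ by $|B|$ is base point free (a base point of it would be a base point of $|B|$), so $B|_{D_j}$ is either trivial, in which case there is nothing to prove, or satisfies $h^0(D_j,B|_{D_j})\geq 2$ and admits a base point free pencil $W\subseteq H^0(D_j,B|_{D_j})$. The base point free pencil trick then yields an exact sequence $0\to (A-B)|_{D_j}\to W\otimes A|_{D_j}\to (A+B)|_{D_j}\to 0$, whence the cokernel of $W\otimes H^0(D_j,A|_{D_j})\to H^0(D_j,(A+B)|_{D_j})$ injects into $H^1(D_j,(A-B)|_{D_j})$. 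So (iv) follows once we know $H^1(D,(A-B)|_D)=0$, and this is exactly where the last hypothesis enters: the sequence $0\to \mathcal{O}_X(A-2B)\to \mathcal{O}_X(A-B)\to \mathcal{O}_D((A-B)|_D)\to 0$ together with $h^1(X,A-B)=0$ and $h^2(X,A-2B)=0$ gives the vanishing.

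I expect the main work to be bookkeeping rather than conceptual: one must check that the general member $D$ is genuinely smooth and that each restriction sequence above is exact (which holds since $D$ is an effective Cartier divisor with $\mathcal{O}_X(D)\cong \mathcal{O}_X(B)$), and one must handle the possibility that $D$ is disconnected — this occurs precisely when $|B|$ is a multiple of an elliptic pencil — by the component-by-component argument above, noting that on a smooth curve a base point free line bundle is either trivial or has at least two sections. The genuinely substantive inputs, Bertini's theorem and the base point free pencil trick, are classical. Alternatively, one may run the argument through the syzygy bundle $M_B:=\ker(H^0(X,B)\otimes \mathcal{O}_X\to \mathcal{O}_X(B))$: twisting $0\to M_B\to H^0(X,B)\otimes\mathcal{O}_X\to \mathcal{O}_X(B)\to 0$ by $\mathcal{O}_X(A)$ shows that surjectivity of the multiplication map is equivalent to $H^1(X,M_B\otimes \mathcal{O}_X(A))=0$, and the hypotheses $h^1(X,A-B)=h^2(X,A-2B)=0$ say exactly that $\mathcal{O}_X(A)$ is $0$-regular with respect to the base point free class $B$ in the sense of Castelnuovo--Mumford, so that Mumford's regularity theorem applies; the curve-restriction argument above, however, is more self-contained.
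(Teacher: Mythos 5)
Your argument is correct, but note that the paper itself offers no proof of this statement: Proposition \ref{ottem} is quoted verbatim from Ottem \cite[Proposition 2.2]{JCO}, so there is nothing internal to compare against. What you give is a complete, self-contained proof along the classical lines: restrict to a general (smooth, by Bertini, possibly disconnected) member $D\in|B|$, use the three restriction sequences together with $h^1(A)=h^1(A-B)=h^1(\mathcal O_X)=0$ to lift sections and identify the kernel of $H^0(A+B)\to H^0(D,(A+B)|_D)$ with $s_D\cdot H^0(A)$, and reduce the whole statement to surjectivity of multiplication on $D$, which you obtain component by component from the base-point-free pencil trick once $H^1(D,(A-B)|_D)=0$ is extracted from $h^1(A-B)=h^2(A-2B)=0$. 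All the steps check out, including the two delicate points: the disconnected case (where $B$ is a multiple of an elliptic pencil class, $B|_{D_j}$ is trivial on each fiber component and the componentwise extension-by-zero argument is legitimate because $H^0(X,B)\to H^0(D,B|_D)$ is onto) and the dichotomy ``$B|_{D_j}$ trivial or $h^0\geq 2$ with a base-point-free pencil''. This is in effect a hands-on proof of the $0$-regularity statement and is essentially the standard route for results of this type, so it is close in spirit to the cited source. Two small remarks: the parenthetical ``$D$ disconnected precisely when $|B|$ is a multiple of an elliptic pencil'' should say a multiple $aF$ with $a\geq 2$ (for $a=1$ the general member is connected), though nothing depends on this; and your alternative via the syzygy bundle and Mumford's regularity theorem should be flagged as needing a version of that theorem for merely globally generated (not ample) $B$ — your main argument is precisely the self-contained substitute, so the proof does not rest on it.
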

 
 On the other hand, the following results show that under suitable conditions  
$R(X)$ must have a generator in a certain degree.
 
  \begin{lemma}\label{l2}
 Let $X$ be a K3 surface and let $D=E_1+E_2+E_3$ be a base point free divisor, 
 where $E_1,E_2,E_3$  are $(-2)$-curves such that $h^1(E_i+E_j)=0$ 
 for all distinct $i,j$. Then the natural map
 \[
 \psi:\bigoplus_{i=1,2,3} H^0(D-E_i)\to H^0(D)
 \]
 is not surjective. Moreover, if $E_1\cap E_2\cap E_3=\emptyset$, then the image of $\psi$ 
 has codimension one.
 \end{lemma}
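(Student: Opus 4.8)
The plan is to analyze the map $\psi$ by decomposing the divisor $D = E_1+E_2+E_3$ step by step, using the $(-2)$-curves $E_i$ as multipliers and applying the Koszul-type surjectivity results (Theorem \ref{teokoszul}) where available. First I would record the relevant cohomology: since $D$ is base point free (hence nef) on a K3 surface and nonzero, Riemann--Roch gives $h^0(D) = 2 + \tfrac12 D^2$ and $h^1(D) = 0$; similarly $h^0(D-E_i) = h^0(E_j+E_k) = 2 + \tfrac12(E_j+E_k)^2$ for $\{i,j,k\}=\{1,2,3\}$, using the hypothesis $h^1(E_j+E_k)=0$ (and noting $E_j+E_k$ is effective, nonzero, so $h^0\geq 1$ and $h^2=0$). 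Expanding $D^2 = \sum E_i^2 + 2\sum_{i<j} E_i\cdot E_j = -6 + 2\sum_{i<j} E_i\cdot E_j$ and similarly for each $(E_j+E_k)^2 = -4 + 2E_j\cdot E_k$, a direct count of dimensions shows that $\sum_i h^0(D-E_i) = h^0(D) + 1 + (\text{something})$; the key point is that the three subspaces $f_i\cdot H^0(D-E_i)\subseteq H^0(D)$ overlap, so $\psi$ cannot be surjective for dimension reasons once one controls the intersections of images.

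The cleaner route, which also yields the codimension-one statement, is to compare $\psi$ with the analogous two-term map. Set $D' = E_1 + E_2$, which is effective and nonzero with $h^1(D')=0$ by hypothesis; then $h^0(D') = h^0(D - E_3)$. I would first show, using Theorem \ref{teokoszul}(1) applied to the curves $E_1, E_2$ and the divisor $D' = E_1+E_2$ (noting $\operatorname{div}_{E_1}(f_1)=E_1$ and $\operatorname{div}_{E_2}(f_2)=E_2$ are disjoint precisely when $E_1\cap E_2=\emptyset$, and in general tracking the scheme $E_1\cap E_2$), that the image of $f_1\cdot H^0(E_2) \oplus f_2\cdot H^0(E_1) \to H^0(D')$ has codimension equal to $h^0(\mathcal{O}_{E_1\cap E_2})$; for a base point free $D$ this intersection contributes the expected drop. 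Multiplying through by a section $f_3$ cutting out $E_3$ embeds this picture inside $H^0(D)$: the image of $f_1 H^0(D-E_1) + f_2 H^0(D-E_2)$ restricted to the subspace $f_3\cdot H^0(D-E_3)$ behaves like the two-term map above. Then I would add the third summand $f_3\cdot H^0(D-E_3)$ and use the base-point-freeness of $D$ together with $h^1(D-E_i-E_j) = h^1(E_k)=0$ and $h^2(D-E_1-E_2-E_3)=h^2(-\text{(effective)}) = 0$ in Theorem \ref{teokoszul}(2): this shows $\psi$ fails to be surjective by exactly the codimension coming from the common intersection locus, and when $E_1\cap E_2\cap E_3 = \emptyset$ this locus is empty so the only codimension is the ``global'' one, namely one.

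The main obstacle I anticipate is making precise the bookkeeping of the intersection schemes $E_i\cap E_j$ and $E_1\cap E_2\cap E_3$ and relating the codimension of $\operatorname{im}\psi$ to them cleanly. Concretely, one wants an exact sequence of the shape
\[
0 \to H^0(D - E_1 - E_2 - E_3) \to \bigoplus_i H^0(D-E_i-E_j) \to \bigoplus_i H^0(D-E_i) \xrightarrow{\psi} H^0(D),
\]
a Koszul-type resolution associated to the regular-ish sequence $(f_1,f_2,f_3)$, whose cokernel on the right is controlled by $h^1$ of the kernel sheaf; the subtlety is that $(f_1,f_2,f_3)$ need not be a regular sequence when the $E_i$ meet, so one must instead work with the ideal-sheaf filtration and identify the obstruction to surjectivity with $H^1$ of $\mathcal{O}_X(D)$ twisted down by the three divisors, which on a K3 with $D$ base point free collapses to a one-dimensional contribution. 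Once this homological input is in place, the ``moreover'' clause is immediate: vanishing of the triple intersection kills the only term that could push the codimension above one, and base-point-freeness of $D$ (equivalently, $\operatorname{Bs}|D| = \emptyset$, so the full section space genuinely separates the relevant points) forces that remaining codimension to be exactly one rather than zero.
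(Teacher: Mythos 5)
Your overall skeleton (a dimension count for non-surjectivity plus a Koszul resolution for the codimension-one bound) is the right one, but the execution goes wrong at exactly the point where the content of the lemma lives. You write $h^2(X,D-E_1-E_2-E_3)=h^2(-(\text{effective}))=0$; but $D=E_1+E_2+E_3$, so $D-E_1-E_2-E_3=0$ and $h^2(X,\oo_X)=h^0(X,K_X)=1$ on a K3 surface. This is not a side issue: if that $h^2$ vanished, Theorem \ref{teokoszul}(2) would apply verbatim when $E_1\cap E_2\cap E_3=\emptyset$ (the remaining hypotheses $h^1(D-E_i-E_j)=h^1(E_k)=0$ do hold for $(-2)$-curves) and would prove $\psi$ \emph{surjective} --- the opposite of the statement. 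The one-dimensional cokernel is precisely the shadow of $H^2(\oo_X)\cong\cc$: in the Koszul complex $0\to\oo_X\to\bigoplus_i\oo(E_i)\to\bigoplus_i\oo(E_j+E_k)\to\oo(D)\to 0$, which is exact when the triple intersection is empty, the cokernel of $\psi$ injects into $H^1(M)$ for $M$ the middle syzygy sheaf, and $H^1(M)\cong H^2(\oo_X)=\cc$ because $h^1(\oo(E_i))=h^2(\oo(E_i))=0$. Your last paragraph instead locates the obstruction in ``$H^1$ of $\oo_X(D)$ twisted down by the three divisors'', i.e.\ $H^1(\oo_X)$, which is zero on a K3 --- so as written your mechanism yields codimension zero, not one.

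The non-surjectivity half also needs to be closed. The dimension count comes out exactly: $h^0(D-E_i)=E_j\cdot E_k$ and $h^0(D)=\sum_{i<j}E_i\cdot E_j-1$, so $\sum_i h^0(D-E_i)=h^0(D)+1$ on the nose (there is no extra ``something''). Non-surjectivity is then equivalent to $\dim\ker\psi\geq 2$, and this is the step you only gesture at (``once one controls the intersections of images''): you must actually exhibit the overlap, e.g.\ the Koszul syzygies $(af_2f_3,\,bf_1f_3,\,cf_1f_2)$ with $a+b+c=0$ form a two-dimensional subspace of $\ker\psi$ (equivalently, $f_1f_2f_3$ lies in all three images). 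With that in hand $\dim\operatorname{im}\psi\leq h^0(D)-1$, and combined with the corrected bound $\operatorname{coker}\psi\hookrightarrow H^2(\oo_X)$ one gets codimension exactly one in the ``moreover'' case. Note finally that it is this non-surjectivity, not base-point-freeness, that rules out codimension zero at the last step; and Theorem \ref{teokoszul}(1) as stated only gives surjectivity under disjointness, so it cannot by itself deliver the codimension formula $h^0(\oo_{E_1\cap E_2})$ you invoke for the two-term map.
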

 

 \begin{proposition}\label{minimal}
Let $G=\{w_0,\dots,w_r\}\subset \Cl(X)$ containing the degrees of a homogeneous 
generating set of $R(X)$ and suppose that the linear system associated to $w_0\in G$  is base point free.
Then $R(X)$ has a generator in degree $w_0\in G$ if one of the following holds:
\begin{enumerate}[$i.$]
\item any linear combination $w_0=\sum_{i=1}^r a_iw_i$ with $a_i\in \zz, a_i\geq 0$ 
contains the class of a $(-2)$-curve in its support;
\item any linear combination $w_0=\sum_{i=1}^r a_iw_i$ with $a_i\in \zz, a_i\geq 0$ 
contains in its support one of the classes of  two $(-2)$-curves 
$E_1,E_2$ with $E_1\cdot E_2>0$;
\item  $w_0=w_1+w_2+w_3$, where $w_i$ are the classes 
of three $(-2)$-curves with $h^1(w_i+w_j)=0$ for all distinct $i,j$, 
and any linear combination $w_0=\sum_{i=1}^r a_iw_i$ with 
$a_i\in \zz, a_i\geq 0$ contains one among $w_1,w_2,w_3$ in its support. 
\end{enumerate}
\end{proposition}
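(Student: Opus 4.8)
The plan is to prove the contrapositive of each implication: assuming $R(X)$ has \emph{no} generator in degree $w_0$, I will show that $R(X)_{w_0}$ is forced to lie inside a proper linear subspace of $H^0(\oo_X(w_0))$, which is absurd.

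The first step is a reduction common to all three cases. Since $G$ contains the degrees of a homogeneous generating set of $R(X)$, the vector space $R(X)_{w_0}$ is spanned by monomials $m=g_1\cdots g_k$ in homogeneous generators whose degrees lie in $G$. If $R(X)$ has no generator in degree $w_0$, then each such monomial involves at least two generators; and since $\deg(g_1)+\dots+\deg(g_k)=w_0$ with all $\deg(g_j)$ nonzero effective classes, none of them equals $w_0$, hence each lies in $\{w_1,\dots,w_r\}$. Thus every spanning monomial of $R(X)_{w_0}$ produces a relation $w_0=\sum_{i=1}^r a_iw_i$ with $a_i\in\zz_{\geq0}$, $\sum_i a_i\geq 2$, and support $\{w_i: a_i>0\}$. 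Recall also that whenever $w_i=[E]$ is the class of a $(-2)$-curve, $R(X)_{[E]}$ is one–dimensional, spanned by the canonical section $x_E$ vanishing along $E$; so $a_i>0$ forces $x_E\mid m$, i.e. $m\in x_E\cdot R(X)_{w_0-[E]}$, and in particular $m$ vanishes along $E$.

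Now I would dispatch the three cases. In $(iii)$, writing $D=E_1+E_2+E_3\in|w_0|$, the above shows that each spanning monomial of $R(X)_{w_0}$ lies in $x_{E_1}H^0(D-E_1)+x_{E_2}H^0(D-E_2)+x_{E_3}H^0(D-E_3)$, which is exactly the image of the map $\psi$ of Lemma~\ref{l2}. Since $|w_0|$ is base point free and $h^1(w_i+w_j)=0$ for distinct $i,j$, Lemma~\ref{l2} gives that $\psi$ is not surjective, so $R(X)_{w_0}\subseteq\operatorname{Im}\psi\subsetneq H^0(D)=R(X)_{w_0}$, a contradiction. Case $(ii)$ is analogous but lighter: each spanning monomial is divisible by $x_{E_1}$ or by $x_{E_2}$; choosing a point $p\in E_1\cap E_2$ (possible because $E_1\cdot E_2>0$), every spanning monomial vanishes at $p$, hence $R(X)_{w_0}$ is contained in the hyperplane of sections of $\oo_X(w_0)$ vanishing at $p$, which is proper because $p$ is not a base point of $|w_0|$ — contradiction again. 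Case $(i)$ follows the same pattern: each spanning monomial is divisible by $x_E$ for some $(-2)$-curve $E$ appearing in its relation, so $R(X)_{w_0}\subseteq\sum_E x_E H^0(w_0-E)$, the sum ranging over the finitely many $(-2)$-curves forced into some such support, and one checks that this sum is a proper subspace of $H^0(\oo_X(w_0))$ — immediate when a single $(-2)$-curve is forced into every support, and in general a consequence of base point freeness together with the configuration of the images $\varphi_{w_0}(E)$.

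The main obstacles I anticipate are two. The first is making the reduction step fully rigorous: one must use that "$R(X)$ has a generator in degree $w_0$" is equivalent to $R(X)_{w_0}$ not being spanned by products of homogeneous elements of strictly smaller effective degree — standard for Cox rings because the effective cone is pointed and $R(X)_0=\cc$, but worth stating precisely. The second, and the genuinely delicate point, is the properness assertion in case $(i)$: cases $(ii)$ and $(iii)$ reduce it to a single point, respectively to Lemma~\ref{l2}, whereas in case $(i)$ one must verify directly that the $(-2)$-curves dictated by the hypothesis cannot together span all of $H^0(\oo_X(w_0))$.
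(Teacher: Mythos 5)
Your overall strategy --- pass to the contrapositive, note that if $R(X)$ has no generator in degree $w_0$ then $R(X)_{w_0}=H^0(w_0)$ is spanned by monomials in generators of degrees in $\{w_1,\dots,w_r\}$, and use that a factor of degree $[E]$ with $E$ a $(-2)$-curve forces divisibility by the canonical section $x_E$ --- is the right one, and your treatments of (ii) and (iii) are complete and correct: in (ii) every spanning monomial vanishes at a point of $E_1\cap E_2$ (nonempty since $E_1\cdot E_2>0$), which is not a base point of $|w_0|$, and in (iii) the span is contained in the image of the map $\psi$ of Lemma \ref{l2}, which is a proper subspace. The preliminary reduction (well-definedness of the degrees of a minimal generating set because the effective cone is pointed and $R(X)_0=\cc$, together with $\dim R(X)_{[E]}=1$ for a $(-2)$-curve) is also handled correctly.

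The gap is in case (i), exactly where you flag it. The assertion that $\sum_E x_E H^0(w_0-E)$, summed over a \emph{varying} collection of $(-2)$-curves, is a proper subspace of $H^0(w_0)$ is not ``a consequence of base point freeness together with the configuration of the images'': it is false in general. Item (1) of Theorem \ref{teokoszul}, applied to two disjoint $(-2)$-curves $E,E'$ with $h^1(w_0-E-E')=0$, shows that $x_EH^0(w_0-E)+x_{E'}H^0(w_0-E')$ is already all of $H^0(w_0)$; this is precisely the phenomenon that forces the hypothesis $E_1\cdot E_2>0$ in (ii) and the $h^1$-vanishing conditions in (iii). So under the reading ``each decomposition contains the class of \emph{some} $(-2)$-curve, possibly depending on the decomposition,'' your argument cannot be completed, and the conclusion itself would fail in the disjoint situation above. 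Item (i) must be read as: there is a \emph{single} $(-2)$-curve $E$ whose class lies in the support of every decomposition $w_0=\sum a_iw_i$. Under that reading your containment becomes $R(X)_{w_0}\subseteq x_EH^0(w_0-E)$, one fixed subspace, which is proper because $E$ is not contained in $\Bs|w_0|=\emptyset$, and the proof closes --- this is the ``immediate'' subcase you mention, and it is the only one the criterion actually covers.
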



The previous results have been implemented in several Magma \cite{B.C.P} programs 
included as ancillary files in the arXiv version of the paper \href{https://arxiv.org/abs/1909.01267}{https://arxiv.org/abs/1909.01267}.

\section{K3 surfaces of Picard number four}
In this section we study the geometry of each family of Mori dream K3 surfaces of Picard number four.
As a first step, we compute their effective cone and their nef cone.

\begin{theorem}\label{main-eff}
For each of the $14$ families of Mori dream K3 surfaces $X$ of Picard number four, Tables \ref{Table1}, \ref{Table2} and \ref{Table3} describe 
\begin{enumerate}
\item the set $E(X)$ of extremal rays of the effective cone and its Hilbert basis ${\rm BEff}(X)$, 
\item the set of extremal rays $N(X)$ of the nef cone  and its Hilbert basis ${\rm BNef}(X)$. 
\end{enumerate}
Moreover, Table \ref{Intmat} gives the intersection matrix of the $(-2)$-curves for each family.
\end{theorem}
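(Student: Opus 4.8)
The plan is to treat each of the $14$ lattices $V_i$ of Theorem \ref{classification} separately and, in each case, run Vinberg's algorithm as described in Section \ref{eff-nef} to produce the complete, finite list of $(-2)$-curves. Concretely, for a fixed lattice $V_i$ I would first choose an explicit ample class $\alpha\in V_i$ with $\alpha^2>0$ (for the lattices presented via a Gram matrix this is a finite search; for the split forms like $(8)\oplus 3A_1$ or $U(k)\oplus 2A_1$ an obvious candidate is available). The restriction of the intersection form to $\alpha^\perp$ is negative definite, so the set of $(-2)$-classes orthogonal to $\alpha$ is finite and can be enumerated; this produces the root subsystem $L$ and, after choosing a separating vector $H$, the set $R_0$ of simple roots. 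Iterating the second inductive step of the algorithm (adjoining $(-2)$-classes of increasing intersection with $\alpha$ that meet all previously found roots non-negatively) yields a candidate set $R_n$ of fundamental roots; termination is certified by the semidefiniteness test on the facets of the cone they generate. Since all computations live in a rank-$4$ lattice with small discriminant, the algorithm terminates quickly, and this is exactly the Magma implementation from \cite{A.C.L} referenced in the excerpt — so in practice this step is a finite verification rather than a theoretical argument.

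Once the finite list $E(X)$ of extremal effective classes (equivalently, by \cite[Proposition 2.13]{A.H.L}, the classes of all $(-2)$-curves, since $\rho(X)=4\ge 3$ and $\Eff(X)$ is polyhedral by Theorem \ref{Mori dream}) is in hand, the effective cone is the convex cone they span, and the nef cone is obtained by dualizing with respect to the intersection form; its extremal rays $N(X)$ are computed as the facets of $\Eff(X)$. The Hilbert bases $\BEff(X)$ and $\BNef(X)$ are then the unique minimal generating sets of the monoids $\Eff(X)\cap\NS(X)$ and $\Nef(X)\cap\NS(X)$, which for a rank-$4$ polyhedral cone is a routine lattice-point computation (e.g.\ via the standard algorithm for Hilbert bases of rational polyhedral cones, cf.\ \cite[Definition 7.17]{M.S}). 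Finally, the intersection matrix of the $(-2)$-curves recorded in Table \ref{Intmat} is read off directly by pairing the classes in $E(X)$ against each other using the Gram matrix of $V_i$. Filling in Tables \ref{Table1}, \ref{Table2}, \ref{Table3} and \ref{Intmat} amounts to carrying out this procedure for $i=1,\dots,14$.

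The only genuinely subtle point — and the one I would flag as the main obstacle — is the correctness and termination of Vinberg's algorithm in each case, i.e.\ verifying that the set $R_n$ produced really is a \emph{complete} set of fundamental roots and that no $(-2)$-curve has been missed. This is precisely the role of the final bullet in the algorithm (the negative-semidefiniteness test on every facet of the cone $\mathcal C$ generated by $R_n$): once that test passes, Vinberg's criterion guarantees that $\mathcal C$ is a fundamental chamber for the reflection group $W^{(2)}(\NS(X))$ and hence that its walls give all the $(-2)$-curves. I would therefore organize the proof as: (i) state the chosen $\alpha$ and $H$ for each $V_i$; (ii) exhibit the output list $R_n$; (iii) invoke the semidefiniteness certificate to conclude completeness; (iv) dualize and compute Hilbert bases; (v) tabulate. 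Because the ranks are small and the lattices explicit, steps (i)–(v) are checkable by the cited Magma code, and the write-up for each family reduces to displaying the resulting data in the tables.
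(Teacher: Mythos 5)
Your proposal is correct and follows essentially the same route as the paper: the paper's proof simply invokes Vinberg's algorithm from Section \ref{eff-nef} (via the Magma library \texttt{Find-2} of \cite{A.C.L}) to find the fundamental roots, then dualizes to get the nef cone. Your write-up fills in the same steps in more detail, in particular correctly identifying the facet semidefiniteness test as the completeness certificate, so there is nothing to add.
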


\begin{proof}
A set of fundamental roots for the Picard lattice is obtained by means of the algorithm described in section \ref{eff-nef}, implemented in the Magma library {\rm Find-2} in \cite{A.C.L}. The nef cone is thus obtained as the dual of the effective cone with respect to the intersection form of Cl(X). 
\end{proof}

We now prove the result about the Cox ring of such surfaces stated in the Introduction.\\

\noindent {\em Proof of Theorem \ref{main-cox}.}
 By the remarks in section \ref{cox} the Cox ring has a generator in each degree $w\in {\rm BEff}(X)$.
Observe that if $D$ is not nef, then there exists a $(-2)$-curve $C$ such that $D\cdot C<0$, 
so that $C$ is  contained in the base locus of $|D|$ and the multiplication map $H^0(X,D-C)\rightarrow H^0(X,D)$ 
by a non-zero element of $H^0(X,C)$ is surjective. Thus we can assume $[D]$ to be nef.

  
By Theorem \ref{gen} it is enough to consider those nef degrees which are sums of at most 
three elements of the Hilbert basis of the nef cone or of the form $[2(F+F')]$, 
where $F,F'$ are smooth elliptic curves with $F\cdot F'=2$. The last type of degrees only exist for 
the families of K3 surfaces with N\'eron-Severi lattice isometric to either $V_5= U(2)\oplus 2A_1$ or $V_9=U(2)\oplus A_2$. 
In case $\NS(X)\cong V_5$ the Cox ring has been computed in \cite[Proposition 6.7, i)]{A.H.L}.

This result allows to form a finite list of possible nef degrees, which is then analyzed using the 
techniques in section \ref{cox}, which have been implemented in several Magma programs \cite[\S 3.3]{A.C.L}.
 In particular Theorem \ref{teokoszul}, Proposition \ref{va} and Proposition \ref{ottem} allow to discard certain degrees, while Lemma \ref{l2} and Proposition \ref{minimal} are used to prove that certain degrees are necessary. 
 
We recall that, by Theorem \ref{bl}, the linear system of any effective nef divisor is base point free unless there exists 
a smooth elliptic curve $F$ and a $(-2)$-curve $E$ such that $E\cdot F=1$.
This happens only for the families of K3 surfaces with 
N\'eron-Severi lattice isometric to either $V_4=U\oplus 2A_1$ or $V_8=U\oplus A_2$. 
In case $\NS(X)\cong V_4$ the Cox ring has been computed in \cite[Proposition 6.7, ii)]{A.H.L}.
In case  $\NS(X)\cong V_8$ we apply the same techniques, but we exclude from the computations the only element in the Hilbert basis of the nef cone  which is not base point free, which is the class $\BNef[4]$.

In case $\NS(X)\cong V_{14}$ we are unable to compute a minimal generating set of $R(X)$ for computational reasons (the Hilbert basis of the nef cone contains $111$ elements). 
To obtain the set of degrees in Table  \ref{TableGen rank4} we only apply  item i) of Theorem \ref{teokoszul} and Proposition \ref{minimal}. \qed\\

 

%
%
%
%
%
 We now  analyze each family of Mori dream K3 surfaces
of Picard number four using the results in Theorem \ref{main-eff} and Theorem \ref{main-cox}.
We will denote by $\mathcal F_i$ the family 
of K3 surfaces whose N\'eron-Severi lattice is isometric to $V_i$
for $i=1,\dots,14$. 

\subsection*{The family $\mathcal F_1$}

Let $X$ be a K3 surface with $\NS(X)\cong V_1=(8)\oplus 3A_1$.
By Theorem \ref{main-eff} $X$ contains  $12$  $(-2)$-curves 
whose intersection matrix is given in Table \ref{Intmat}.
Moreover,  the Hilbert basis of the nef cone of $X$ contains $51$ classes, 
and exactly six of them are classes of elliptic fibrations: $\BNef[1]$, $\BNef[6]$, $\BNef[11]$, $\BNef[27]$, $\BNef[29]$ and $\BNef[35]$. 
Each elliptic fibration has two fibers of type $\tilde A_1$ and has no sections.


\begin{proposition}\label{gen v1}
Let $X$ be a K3 surface with $\NS(X)\cong V_1=(8)\oplus 3A_1$. 
Then 
\begin{enumerate}
\item $X$ is birational to a complete intersection $\overline X$ of three quadrics in $\pp^5$ of the form
\[
Q_1(x_1,x_2,x_3,x_4, x_5)=Q_2(x_0,x_2,x_3, x_4,x_5)=Q_3(x_0,x_1,x_3,x_4,x_5)=0,
\]
where $Q_1,Q_2,Q_3$ are homogeneous of degree two and do not contain the monomials 
$x_0^2, x_1^2, x_3^2$;
\item the projection of $\overline X$ from the plane $x_0=x_1=x_2=0$  induces a double cover  
$\pi:X\to \pp^2$ branched along a smooth plane sextic with six $6$-tangent conics $C_1,\dots,C_6$;
\item the surface $X$ has twelve $(-2)$-curves: the curves $R_{ij}$, $i=1,\dots,6$, $j=1,2$, such that 
$\pi(R_{i1})=\pi(R_{i2})=C_i$;
 \item a minimal generating set of the Cox ring of $X$ is
 $s_1, \dots,   s_{15}$, 
 where $s_1,\dots,s_{12}$ are defining sections of the $(-2)$-curves 
 and $s_{13}, s_{14}, s_{15}$ is a basis of $H^0(\pi^*\oo_{\pp^2}(1))$.
\end{enumerate}
\end{proposition}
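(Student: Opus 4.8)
The plan is to establish the four statements roughly in the order they are listed, since each feeds into the next. First I would set up the lattice-theoretic dictionary: from Theorem \ref{main-eff} and Table \ref{Intmat} one reads off a distinguished class, call it $H_2$, which is the generator of the $(8)$-summand of $V_1=(8)\oplus 3A_1$, so $H_2^2=8$; it is nef (after possibly acting by the Weyl group so that it lies in the chosen fundamental chamber) and orthogonal to all three $A_1$-roots. By Theorem \ref{hyp}(b) applied with $D=H_2$, or more directly by checking that $H_2/2$ is not in $\Cl(X)$ while $H_2\cdot F=2$ for a suitable elliptic class $F$ among the six fibrations listed, one sees that $|H_2|$ is \emph{not} hyperelliptic (one must verify the alternatives in Theorem \ref{hyp}(a),(b) fail). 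Hence $\varphi_{H_2}$ maps $X$ birationally onto a surface $\overline X\subset\pp^{h^0(H_2)-1}=\pp^5$ of degree $H_2^2=8$, contracting exactly the $(-2)$-curves orthogonal to $H_2$. A degree-$8$ surface in $\pp^5$ that is not contained in a quadric-free ideal is a complete intersection of three quadrics (this is the standard description of the generic projective K3 of genus $5$); the three contracted $(-2)$-curves become three nodes of $\overline X$, which one can normalize to the three coordinate points where $x_0=x_1=x_3$ vanish pairwise, forcing each defining quadric to omit one of $x_0^2,x_1^2,x_3^2$ and to not involve the corresponding variable — this gives the normal form in (i). The bookkeeping that the three nodes can be simultaneously placed at these coordinate points, and that the resulting quadrics have the stated shape, is a linear-algebra argument on the three-dimensional space of quadrics through the nodes.

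For (ii), I would take $F:=H_2-(\text{something})$... more precisely, project $\overline X\subset\pp^5$ from the plane $\Lambda:\{x_0=x_1=x_2=0\}$ spanned by the three nodes. The linear system of the projection is $|H_2-C_1-C_2-C_3|$ where $C_i$ are the $(-2)$-curves over the nodes; one computes its square to be $H_2^2-2\cdot\text{(stuff)}$, and I expect it to equal $2$, so by Theorem \ref{hyp}(c) the map $\varphi$ associated to this class is a double cover of $\pp^2$. Its branch sextic is smooth because $X$ is a (very general) member of $\mathcal F_1$ and the pullback lattice leaves no room for extra $(-2)$-curves; the six conics $C_1,\dots,C_6$ arise as the images of the six remaining pairs of $(-2)$-curves, each pair summing (together with the fiber class) to the degree-$2$ polarization, which forces $\pi(R_{i1})=\pi(R_{i2})$ to be a conic $6$-tangent to the sextic. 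Matching the count $12=2\cdot 6$ of $(-2)$-curves from Table \ref{Intmat} with the six conics proves (iii); here the intersection matrix is exactly what tells us the curves come in six disjoint pairs each meeting the pullback of a line appropriately.

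Part (iv) is where the real work lies and I expect it to be the main obstacle. By Theorem \ref{main-cox}/Theorem \ref{gen} the degrees of a minimal generating set of $R(X)$ are among $(-2)$-curve classes and sums of at most three elements of $\BNef(X)$ (there is no $2(F+F')$ contribution since $\NS(X)\not\cong V_5,V_9$). The twelve $(-2)$-curve classes are forced to be generator degrees because they lie in $\BEff(X)$ (Table \ref{Table1}), giving $s_1,\dots,s_{12}$; and the class $h:=\pi^*\oo_{\pp^2}(1)$ is forced in by Proposition \ref{minimal}, since every way of writing $h$ as a nonnegative combination of the $\BNef$-generators must involve a $(-2)$-curve class (one checks this combinatorially from Table \ref{Table1}), and $|h|$ is base point free by Corollary \ref{section} as no elliptic fibration of $X$ has a section. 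That gives the three sections $s_{13},s_{14},s_{15}$ spanning $H^0(h)$. The hard part is the \emph{sufficiency}: one must show every other nef degree $w=\sum a_i\BNef[i]$ with $\sum a_i\le 3$ is generated by $s_1,\dots,s_{15}$, i.e. the relevant multiplication/Koszul maps (Theorem \ref{teokoszul}, Propositions \ref{va}, \ref{ottem}) are surjective onto $H^0(w)$. This is the computation the authors run in Magma: for each of the finitely many candidate degrees, verify the hypotheses $h^1(X,D-E_i-E_j)=0$, $h^2(X,D-E_1-E_2-E_3)=0$, and the emptiness of the relevant base-locus intersections $\bigcap\div_{E_i}(f_i)=\emptyset$ (the latter read off from the intersection matrix in Table \ref{Intmat}), so that $H^0(D)$ is generated by products of lower-degree sections, which by induction lie in the subalgebra generated by $s_1,\dots,s_{15}$. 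I would organize this as: (a) the subalgebra $A$ generated by $s_1,\dots,s_{15}$ contains $H^0(w)$ for every $w\in\BEff(X)$ and for $w=h$; (b) by the Koszul-type surjectivity results, $A$ is closed under passing from degrees $\le k$ to degree $k+1$ along the nef cone; (c) hence $A=R(X)$. The vanishing statements in (b) follow from $D-E_i-E_j$ and $D-E_1-E_2-E_3$ being either big and nef (so $h^1=h^2=0$ by Kawamata–Viehweg / the K3 structure) or having a fixed $(-2)$-curve one can strip off, reducing to the previous case — again a finite check over the $\BNef$ data.
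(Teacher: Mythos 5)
Your overall strategy coincides with the paper's: the degree-$8$ class $\BNef[8]=(1,0,0,0)$ gives the octic model in $\pp^5$ with three nodes at the images of the three mutually disjoint $(-2)$-curves, the square-$2$ class $\BNef[4]=\BNef[8]-f_3-f_5-f_{12}=(1,-1,1,-1)$ gives the double plane, and part (iv) reduces to Theorem \ref{main-cox} together with the fact that $(1,-1,1,-1)$ lies in $\BEff(X)$ (so that all three sections of the corresponding $3$-dimensional space are forced to be generators). However, there are two genuine gaps.

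First, in part (i) the normal form of the three quadrics is not a ``linear-algebra argument'' that can be waved through. One must first show that the three nodes are in general position: the paper does this by observing that if they were collinear, $\overline X$ would contain the line through them, and the strict transform of that line together with the three exceptional curves would span a rank-four sublattice of $\NS(X)$ isometric to $D_4$ and generated by classes of $(-2)$-curves, which the intersection matrix in Table \ref{Intmat} excludes. One must then analyze the net $\mathcal N$ of quadrics containing $\overline X$: for each node $p_i$ there is a unique quadric $Q_i\in\mathcal N$ singular at $p_i$, and one has to rule out the degenerate configurations in which two or three of the $Q_i$ coincide (a quadric singular along a line or a plane) before concluding that $Q_1,Q_2,Q_3$ are distinct generators of $\mathcal N$ of the stated shape. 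Your proposal skips both steps, and without them the specific form of the equations in (i) is not established. Second, in part (ii) you deduce smoothness of the branch sextic from $X$ being ``very general'' in $\mathcal F_1$; the proposition is asserted for \emph{every} $X$ with $\NS(X)\cong V_1$, and the correct reason is that the square-$2$ class meets every $(-2)$-curve with multiplicity $2$ (Table \ref{Table5}), hence is ample, so the degree-two morphism contracts nothing and its branch curve is smooth. The remaining ingredients — base point freeness via Corollary \ref{section}, the six relations $2\BNef[4]=f_i+f_j$ producing the six $6$-tangent conics, and the deferral of the sufficiency in (iv) to the computations underlying Theorem \ref{main-cox} — match the paper.
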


\begin{proof}
The class $h=\BNef[8]$ has square $8$, it is base point free by Corollary \ref{section} 
and is non-hyperelliptic (the intersection numbers with the fibers of the $6$ elliptic fibrations are $\geq 8$).
Thus it defines a morphism $\varphi_h:X\to \pp^5$ whose image is a degree eight surface $\overline X$ in $\pp^5$ with three nodes $p_1, p_2,p_3$ at the images of the three disjoint $(-2)$-curves of classes $f_3, f_5, f_{12}$.
By \cite[Theorem 7.2]{SD} the surface $\overline X$ is a complete intersection of three 
quadrics, since the  intersection numbers of $h$ with the fibers 
of the elliptic fibrations of $X$ are $>3$.

If the points $p_{1},p_{2},p_{3}$ were not in general
position, then $\overline X$ would contain the line $L$ through them. 
The strict transform of $L$ and the exceptional divisors over the nodes would 
thus generate a rank four lattice isometric to $D_4$. However,  looking at the intersection matrix 
of the $(-2)$-curves of $X$ (see Table \ref{Intmat}) one can see that there are no four of them with intersection matrix isometric to $D_4$, 
giving a contradiction.
We can therefore suppose that the points $p_{1},p_{2},p_{3}$ are
in general position, and up to projectivities we can assume 
that: 
\[
p_1=(1,0,0,0,0,0),\ p_2=(0,0,1,0,0,0),\  p_3=(0,0,1,0,0,0).
\]
Let $\mathcal N$ be the net of quadrics containing $\overline X$.
It can be proved that there is a unique quadric $Q_i$ in $\mathcal N$ 
having a node at $p_i$ for $i=1,2,3$.
Thus that there are three possible cases, up to renumbering the three quadrics:\\

\noindent a) $Q_1, Q_2, Q_3$ are three distinct quadrics which generate $\mathcal N$;\\
b)  $Q_{1}=Q_{2}$, $Q_1$ is singular along the line $L$ between $p_{1}, p_{2}$ and
$Q_3$ is singular along $p_3$  (the line $L$ is not contained in $\overline X$ 
since otherwise $\overline X$ would be singular along $L$);\\
c) $Q_1=Q_2=Q_3$ and $Q_1$ is singular along the plane containing the points $p_1,p_2,p_3$.\\

Case c) is not possible, since $Q_1$ would be singular along the plane
$\Pi$ containing $p_1,p_2,p_3$ and $\Pi\cap \overline X$ would consist of $p_1,p_2,p_3$ 
and a fourth singular point.  Moreover, case b) can be proved to be a degeneration of case a) 
(in fact, complete intersections of three quadrics in $\pp^5$ of type $b)$ can be proved to 
have $14$ moduli).

Thus we can assume to be in case $a)$ and the defining equations of the quadrics
$Q_1, Q_2, Q_3$ (denoted in the same way) are as in item i) of the  statement.

Consider the class $\BNef[4]=h-f_3-f_5-f_{12}$.
Then $h^2=2$ and $h\cdot f_i=2$ for all $i$,  thus $h$ is ample and 
the associated linear system is base point free by Corollary \ref{section}.
Thus $h$ defines a morphism $\pi:X\to \pp^2$ branched along a smooth plane sextic $B$.
Since 
\[
2h=f_7+f_{10}=f_4+f_{12}=f_8+f_9=f_3+f_6=f_5+f_{11}=f_{1}+f_{2},
\]
the image by $\pi$ of the twelve $(-2)$-curves of $X$ are six smooth conics $C_1,\dots,C_6\subseteq \pp^2$ 
such that $\pi^{-1}(C_i)$ is the union of two smooth rational curves $R_{ij}$ with $j=1,2$, for any $i=1,\dots,6$. 
This implies that the conics are tangent to $B$ at $6$ points.

By Theorem \ref{main-cox} the Cox ring $R(X)$ is generated in the following degrees: 
\[
f_1,\dots, f_{12},\ \BNef[4]. 
\]
Observe that $\BNef[4]$ is also an element of the Hilbert basis of the effective cone of $X$ (see Table \ref{Table1}).
Clearly any minimal generating set of $R(X)$ must contain the sections $s_1,\dots,s_{12}$ defining 
the $(-2)$-curves of $X$ and a basis $s_{13}, s_{14}, s_{15}$ of $H^0(\BNef[4])$, thus $\{s_1,\dots,s_{15}\}$ is a minimal generating set of $R(X)$.
\end{proof}

\begin{corollary}\label{uni1}
The moduli space of K3 surfaces with $\NS(X)\simeq (8)\oplus 3{A}_{1}$
is unirational. 
\end{corollary}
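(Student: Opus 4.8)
The plan is to read off the unirationality of $\mathcal{M}_1$ directly from the projective model of Proposition~\ref{gen v1}, by dominating $\mathcal{M}_1$ with a rational parameter space. By item~(i) of that Proposition, a general K3 surface $X$ with $\NS(X)\cong V_1=(8)\oplus 3A_1$ is the minimal resolution of a complete intersection of three quadrics $\overline{X}=\{Q_1=Q_2=Q_3=0\}\subseteq\pp^5$ in which, after the normalization carried out in the proof (the three nodes placed at fixed coordinate points), the triple $(Q_1,Q_2,Q_3)$ has the special shape displayed there. Let $\mathcal{P}$ be the open subset of the associated linear space of such triples on which $\overline{X}$ is a surface with exactly three nodes. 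Being open in an affine space, $\mathcal{P}$ is a rational variety; the precise normalization is immaterial to the argument, only the linearity of the parameters matters.

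Next I would construct the classifying rational map. The tautological family over $\mathcal{P}$ has general fibre a smooth K3 surface $X_{(Q_1,Q_2,Q_3)}$, the minimal resolution of $\overline{X}$, which carries the hyperplane class $h$ (with $h^2=8$) together with the three exceptional $(-2)$-curves; these span a sublattice of its N\'eron-Severi group isometric to $(8)\oplus 3A_1=V_1$. Forgetting the embedding therefore yields a rational map $u\colon\mathcal{P}\dashrightarrow\mathcal{M}_1$ (landing, a priori, in the moduli space of $V_1$-polarized K3 surfaces, of which $\mathcal{M}_1$ is a dense subset). Item~(i) of Proposition~\ref{gen v1} asserts precisely that every K3 surface in a dense open subset of $\mathcal{M}_1$ is birational to $\overline{X}_{(Q_1,Q_2,Q_3)}$ for some $(Q_1,Q_2,Q_3)\in\mathcal{P}$, i.e.\ that $u$ is \emph{dominant}. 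A variety dominated by a rational variety is unirational, so $\mathcal{M}_1$ is unirational, which is the assertion.

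The mathematical content of the corollary is already contained in Proposition~\ref{gen v1}, so the remaining issue is purely one of bookkeeping, and it is also the only point deserving a line of care: one must make sure that $u$ dominates $\mathcal{M}_1$ itself rather than some proper, more special subfamily of polarized K3 surfaces. This is automatic, since $\mathcal{P}$ is irreducible, every fibre of the tautological family has $V_1$ inside its N\'eron-Severi lattice, the image of $u$ meets $\mathcal{M}_1$ densely by item~(i) of Proposition~\ref{gen v1}, and the moduli space of $V_1$-polarized K3 surfaces is irreducible; hence the closure of the image of $u$ is all of that moduli space, and no dimension count is needed.
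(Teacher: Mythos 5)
Your proposal is correct and follows essentially the same route as the paper: both arguments parametrize the surfaces of the family by the (open subset of a) linear space of triples of quadrics with prescribed nodes furnished by Proposition~\ref{gen v1}(i), observe that this parameter space is rational, and conclude that it dominates $\mathcal M_1$. The paper additionally identifies $\mathcal M_1$ birationally with the quotient of this space by the $20$-dimensional stabilizer of the three nodes in ${\rm PGL}_6(\cc)$ and records the dimension count $36-20=16$, but that extra precision is not needed for unirationality, exactly as you note.
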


\begin{proof} 
Let $\mathcal Q$ be the projective linear system of quadrics through the points 
$p_1,p_2,p_3$ (of dimension $17$) and let $Gr$ be the Grassmannian 
of nets in $\mathcal Q$. 
Let $\mathcal A\subset \mathcal Gr$ be the space of nets of quadrics of type $a)$. 
Observe that, by the proof of Proposition \ref{gen v1}, giving a net in $\mathcal A$ is equivalent 
to give three quadrics $Q_1, Q_2, Q_3\in \mathcal Q$, such that $Q_i$ has a node at $p_i$ for $i=1,2,3$. 
Observe that having a node at $p_i$ imposes $5$ conditions on the coefficients of a quadric in $\mathcal Q$.
Thus $\mathcal A$ is a rational variety of projective dimension $3(\dim(\mathcal Q)-5)= 36$. 
By item i) of Proposition \ref{gen v1} the moduli space of K3 surfaces with $\NS(X)\simeq(8)\oplus 3{A}_{1}$ is birational to the quotient of $\mathcal A$ by the subgroup $G$ of ${\rm PGL}_6(\cc)$ fixing $p_1,p_2,p_3$,
whose elements are matrices of the form 
\[
\left(\begin{array}{cccccc}
* & 0 & 0 & * & * & *\\
0 & * & 0 & \vdots & \vdots & \vdots\\
0 & 0 & * & \vdots & \vdots & \vdots\\
0 & 0 & 0 & \vdots & \vdots & \vdots\\
0 & 0 & 0 & \vdots & \vdots & \vdots\\
0 & 0 & 0 & * & * & *
\end{array}\right)
\]
and thus has dimension $20$.
\end{proof}

\subsection*{The family $\mathcal F_2$}

Let $X$ be a K3 surface with $\NS(X)\cong V_2=(4) \oplus (-4) \oplus A_2$.
By Theorem \ref{main-eff} $X$ contains six $(-2)$-curves whose intersection 
matrix is given in Table \ref{Intmat}.
The Hilbert basis of the nef cone contains $35$ classes, 
with four classes of elliptic fibrations: $\BNef[5]$, $\BNef[11]$, $\BNef[23]$ and $\BNef[29]$. 
Each elliptic fibration has one singular fiber of type $\tilde A_2$ and has no sections.

\begin{proposition}\label{gen v2}
Let $X$ be a K3 surface with $\NS(X)\cong  V_2=(4) \oplus (-4) \oplus A_2$. 
Then 
\begin{enumerate}
\item $X$ is birational to a quartic surface $\overline X\subset \pp^3$ with a node and with two hyperplane 
sections passing through the node which are the union of two conics;
\item the projection of $\overline X$ from the node induces a double cover $\pi:X\to \pp^2$ branched along a smooth plane sextic with two $3$-tangent lines $L_1,L_2$ 
and one $6$-tangent conic $C$;
\item the surface has six $(-2)$-curves: the curves $R_{ij}$, $i,j=1,2$ such that 
$\pi(R_{i1})=\pi(R_{i2})=L_i$ and the curves $S_1,S_2$ such that 
$\pi(S_1)=\pi(S_2)=C$;
\item the Cox ring of $X$ is generated in the degrees given in Table \ref{TableGen rank4} and 
has at least $23$ generators in such degrees:
$s_1,\dots, s_6$ defining the $(-2)$-curves, $s_7,\dots, s_{10}$ defining each a smooth fiber 
of the four elliptic fibrations, $s_{11}\in H^0(h-f_3)$ independent from the elements defining 
$\pi^{-1}(L_1), \pi^{-1}(L_2)$, $s_{12},\dots,s_{23}$ 
whose degrees are elements of the Hilbert basis of the nef cone 
with self-intersections $4$ (for $i=12,\dots,15$), $10$ (for $i=16,\dots,19$) and $12$ (for $i=20,\dots,23$).
\end{enumerate}
\end{proposition}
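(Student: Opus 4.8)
The argument runs parallel to the proof of Proposition \ref{gen v1}.

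\emph{Parts (i)--(iii).} First I would take $h$ to be the class in ${\rm BNef}(X)$ with $h^{2}=4$ (read off from the tables of Theorem \ref{main-eff}). Since no elliptic fibration of $X$ has a section, $|h|$ is base point free by Corollary \ref{section}; moreover $h$ is non-hyperelliptic by Theorem \ref{hyp}, because $h\cdot F\ge 3$ for each of the four elliptic fibre classes $F$ and $h^{2}\ne 8$, so none of the cases (a), (b), (c) can occur. Hence $\varphi_{h}$ maps $X$ birationally onto a quartic surface $\overline X\subset\pp^{3}$ and, by Theorem \ref{bl}, contracts exactly the $(-2)$-curves orthogonal to $h$; from Table \ref{Intmat} there is a single such curve $C$, which is therefore sent to a node $p$. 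The two effective decompositions $h=R_{j1}+R_{j2}+C$ $(j=1,2)$, in which $R_{j1},R_{j2}$ are $(-2)$-curves of $h$-degree $2$ through $p$, then show that the hyperplane sections $\varphi_{h}(R_{j1}+R_{j2}+C)$ split as a union of two conics, which is (i). Next I would set $h':=h-C$; one checks $(h')^{2}=2$ and $h'\cdot D>0$ for all six $(-2)$-curves $D$ of $X$, so $h'$ is ample and base point free, and by Theorem \ref{hyp}(c) the morphism $\varphi_{h'}=:\pi$ is a double cover of $\pp^{2}$ --- concretely, the projection of $\overline X$ from $p$ --- branched along a plane sextic $B$, which is smooth because $\pi$ contracts no $(-2)$-curve. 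Writing $z^{2}=f$ for a local equation of $\pi$, the effective identities $h'=R_{j1}+R_{j2}$ with $R_{j1}\cdot R_{j2}=3$ $(j=1,2)$ and $2h'=S_{1}+S_{2}$ with $S_{1}\cdot S_{2}=6$ force $f$ to restrict to a perfect square on the lines $L_{j}:=\pi(R_{j1})=\pi(R_{j2})$ and on the conic $C':=\pi(S_{1})=\pi(S_{2})$; equivalently $L_{1},L_{2}$ are tangent to $B$ at three points and $C'$ is tangent to $B$ at six points. As Table \ref{Intmat} exhibits $R_{11},R_{12},R_{21},R_{22},S_{1},S_{2}$ as all the $(-2)$-curves of $X$, this gives (ii) and (iii).

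\emph{Part (iv).} That $R(X)$ is generated in the degrees of Table \ref{TableGen rank4} is the instance of Theorem \ref{main-cox} for $V_{2}$. For the lower bound of $23$ generators I would proceed as follows. The six classes of $(-2)$-curves lie in ${\rm BEff}(X)$, so any minimal generating set contains sections $s_{1},\dots,s_{6}$ cutting them out. Each of the four elliptic fibrations has an $\tilde A_{2}$-fibre $F=E_{1}+E_{2}+E_{3}$ with $E_{1}\cap E_{2}\cap E_{3}=\emptyset$ and $h^{1}(E_{i}+E_{j})=0$, so Lemma \ref{l2} together with item (iii) of Proposition \ref{minimal} forces a generator $s_{7},\dots,s_{10}$ in each degree $[F]$. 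In degree $h-f_{3}=h'$ the products $s_{R_{j1}}s_{R_{j2}}$ $(j=1,2)$, which vanish on $\pi^{-1}(L_{j})$, span only a two-dimensional subspace of the three-dimensional space $H^{0}(h')$, forcing one more generator $s_{11}$. Finally, for each of the twelve classes $w\in{\rm BNef}(X)$ with $w^{2}\in\{4,10,12\}$ appearing in the statement, one verifies --- with the Magma routines of \cite{A.C.L} --- that every way of writing $w$ as a non-negative integral combination of the other generating degrees has, in its support, either a $(-2)$-curve class or a pair of $(-2)$-curves meeting positively, so that items (i)--(ii) of Proposition \ref{minimal} apply and produce generators $s_{12},\dots,s_{23}$.

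\emph{The main obstacle.} Parts (i)--(iii) are routine once the intersection table and the Hilbert bases are at hand, the only point requiring attention being that $\varphi_{h}$ contracts a single curve and $\varphi_{h'}$ none at all --- both immediate from Table \ref{Intmat}. The bulk of the work is in (iv): unlike for $\mathcal F_1$, the techniques of section \ref{cox} do not pin down the minimal generating set completely --- the degrees marked with a star in Table \ref{TableGen rank4} remain undecided --- so one has to carry out a degree-by-degree analysis, discarding candidate nef degrees with Theorem \ref{teokoszul} and Propositions \ref{va} and \ref{ottem}, and confirming the necessary ones with Lemma \ref{l2} and Proposition \ref{minimal}.
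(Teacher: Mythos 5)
Your argument is correct and follows essentially the same route as the paper: the degree-$4$ nef class $h=\BNef[7]$ (non-hyperelliptic, contracting only $f_3$) gives the nodal quartic model with the two reducible hyperplane sections $h=f_2+f_3+f_5=f_3+f_4+f_6$, the ample class $k=h-f_3$ of square $2$ gives the double plane with $k=f_2+f_5=f_4+f_6$ and $2k=f_1+f_3$, and the count of $23$ generators is obtained exactly as in the paper (one per $(-2)$-curve, one per elliptic fibration via its unique reducible $\tilde A_2$ fibre, one extra section in the $3$-dimensional $H^0(k)$, and Proposition \ref{minimal} for the remaining twelve nef degrees). The only trivial slip is attributing the computation of $h\cdot f_i$ to Table \ref{Intmat}, which records intersections among the $(-2)$-curves rather than with $h$.
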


\begin{proof}
Let $f_i$ the classes of the $(-2)$-curves for $i=1,\dots, 6$, and let $h=\BNef[7]$.
Then $h^2=4$, $h\cdot f_i=2$ for $i=2,4,5,6$, $h\cdot f_1=8$ and $h\cdot f_3=0$. 
By Corollary \ref{section} the linear system associated to $h$ is base point free.
Moreover, the intersections of $h$ with all classes of elliptic fibrations are $\geq 4$.
Thus $h$ defines a birational morphism $\varphi:X\to \pp^3$ whose image is a quartic 
surface $\overline X\subset \pp^3$ with a node. Since $f_2+f_5=h-f_3$, then the $(-2)$-curves 
of classes $f_2, f_5$ are the proper transforms of two conics lying in a hyperplane section of $\overline X$ 
passing through the node. The same holds for $f_4, f_6$.

Now consider the class $k=\BNef[15]=h-f_3$. 
Then $k^2=2$, $k\cdot f_1=k\cdot f_3=2$ and $k\cdot f_i=1$ for $i\neq 1,3$. 
Thus $k$ is ample and the associated linear system $|k|$ is base point free by Corollary \ref{section}. 
Thus it defines a  double cover $\pi:X\to \pp^2$ branched along a smooth plane sextic $B$.
Since $k=f_2+f_5=f_4+f_6$ and $2k=f_1+f_3$, the image by $\pi$ of the six $(-2)$-curves of $X$ 
are a smooth conic $C \subseteq \pp^2$ and two smooth lines $L_1,L_2\subseteq \pp^2$.
%
The last statement follows from Theorem \ref{main-cox}. 
Observe that we clearly need one generator for each $(-2)$-class,
one generator for each elliptic fibration (since one of its fibers is reducible) and
one generator in degree $k$.
In the remaining degrees, we know that there is at least one generator of $R(X)$
by Proposition \ref{minimal}.
\end{proof}


%
%
%

\subsection*{The family $\mathcal F_3$}

Let $X$ be a K3 surface with $\Cl(X)\cong V_3=(4) \oplus A_3$.
By Theorem \ref{main-eff} $X$ contains five  $(-2)$-curves  
whose intersection matrix is given in Table \ref{Intmat} and described in Figure \ref{fig1}.

\begin{figure}[H]
\begin{center}
\begin{tikzpicture}[scale=1]

\draw (0,0) -- (0.866,0.5);
\draw (0,0) -- (0.866,-0.5);
\draw (0,0) -- (-0.866,0.5);
\draw (0,0) -- (-0.866,-0.5);

\draw [very thick] (0.866,0.5) -- (0.866,-0.5);
\draw [very thick] (-0.866,0.5) -- (-0.866,-0.5);

\draw (0,0) node {$\bullet$};
\draw (0.866,0.5) node {$\bullet$};
\draw (0.866,-0.5) node {$\bullet$};
\draw (-0.866,0.5) node {$\bullet$};
\draw (-0.866,-0.5) node {$\bullet$};

\draw (0,0) node [above]{$E_{5}$};
\draw (0.866,0.5) node [right]{$E_{2}$};
\draw (0.866,-0.5) node [right]{$E_{3}$};
\draw (-0.866,0.5) node [left]{$E_{1}$};
\draw (-0.866,-0.5) node [left]{$E_{4}$};

\draw (-1.1,0.2) node [below]{\small $2$};
\draw (1.1,0.2) node [below]{\small $2$};
\end{tikzpicture}
\end{center}
\caption{Intersection graph of $(-2)$-curves for $\mathcal F_3$}\label{fig1}
\end{figure}
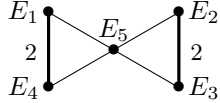

The Hilbert basis of the nef cone of $X$ contains $10$ classes.
 Exactly one of them, $\BNef[3]$,  is the class of an elliptic fibration with
two reducible fibers of type $\tilde A_1$ and without sections.

\begin{proposition}\label{gen v3}
Let $X$ be a K3 surface with $\NS(X)\cong  V_3=(4) \oplus A_3$. Then
\begin{enumerate}
\item $X$ is birational to a double cover 
$\pi:\overline X\to \pp^2$ branched along a plane sextic $B$ with a double point $p$ with two 
bitangent lines $L_1,L_2$ passing through $p$;
\item the surface $X$ has five $(-2)$-curves: 
the exceptional divisor $E$ over the singular point $\pi^{-1}(p)$ 
and four curves $R_{ij}$, with $i,j=1,\ 2$, such that $\pi(R_{ij})=L_{i}$;
\item 
the Cox ring of   $X$ is generated in the degrees given in Table \ref{TableGen rank4} and 
has at least ten generators in such degrees: 
$s_1,\dots, s_5$ defining the $(-2)$-curves, $s_6$ defining the pull-back of a line 
in $\pp^2$ not passing through $p$
and $s_7,\dots,s_{10}$ whose degrees are four distinct classes in the Hilbert basis of the nef cone 
with self-intersection $4$.
\end{enumerate}
\end{proposition}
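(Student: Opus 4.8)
The plan is to argue as in the proofs of Propositions \ref{gen v1} and \ref{gen v2}. Write $E_1,\dots,E_5$ for the five $(-2)$-curves of $X$, labelled as in Figure \ref{fig1}, so that $E_5\cdot E_i=1$ for $i=1,2,3,4$ while $E_1\cdot E_4=E_2\cdot E_3=2$ and $E_i\cdot E_j=0$ for the remaining pairs. From the intersection form one reads the linear equivalence $E_1+E_4\sim E_2+E_3$; this common class is the elliptic pencil $\BNef[3]$, whose two reducible fibres are $E_1+E_4$ and $E_2+E_3$, each of Kodaira type $I_2$. I would then set $h:=E_1+E_4+E_5\sim E_2+E_3+E_5$, a nef class with $h^2=2$, $h\cdot E_5=0$, and $h\cdot E_i=1$ for $i=1,2,3,4$. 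Since the unique elliptic fibration of $X$ has no section, Corollary \ref{section} gives that $|h|$ is base point free, and as $h^2=2$ it is hyperelliptic of type c) in Theorem \ref{hyp}; hence $\varphi_h$ is generically two-to-one onto $\pp^2$, and by Theorem \ref{bl} c) it contracts exactly the $(-2)$-curve $E_5$, the only one orthogonal to $h$. Taking the Stein factorisation I obtain a factorisation $X\to\overline X\to\pp^2$ in which $X\to\overline X$ contracts $E_5$ to an ordinary double point and $\pi\colon\overline X\to\pp^2$ is a finite double cover, branched along a plane sextic $B$ having a node at the point $p$ lying under that singularity. This gives the first part of item i) and identifies $E$ with $E_5$.

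Next I would extract the rest of the geometry of $B$ from the two expressions for $h$. Since $h\cdot E_i=1$, the morphism $\pi$ sends $E_i$ birationally onto a curve of degree $1$, so $L_1:=\pi(E_1)=\pi(E_4)$ and $L_2:=\pi(E_2)=\pi(E_3)$ are lines, and they pass through $p$ because $E_1,E_4$ (respectively $E_2,E_3$) meet $E_5$. From $E_1+E_4+E_5\sim h$, together with the fact that $\pi^{-1}(L_1)$ contains both $E_5$ and $E_1+E_4$, one gets $\pi^{-1}(L_1)=E_1+E_4+E_5$; in particular $\pi$ realises $E_1\cup E_4$ as a reducible double cover of $L_1\cong\pp^1$, unramified over $p$. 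Equivalently, the branch divisor of this double cover, which is $L_1\cap B$ away from $p$ and has degree $L_1\cdot B-2=4$, must be twice a divisor of degree $2$; that is, $L_1$ is tangent to $B$ at two points off $p$. Thus $L_1$, and likewise $L_2$, is a bitangent of $B$ through the node $p$; this completes item i) and proves item ii), with $\{R_{11},R_{12}\}=\{E_1,E_4\}$ and $\{R_{21},R_{22}\}=\{E_2,E_3\}$.

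For item iii) I would invoke Theorem \ref{main-cox}, which already yields that $R(X)$ is generated in the degrees recorded in Table \ref{TableGen rank4}; it remains to check that the ten displayed degrees are forced. Each of the five classes $[E_i]$ lies in $\BEff(X)$, hence forces a generator. For $[h]$ one has $h^0(X,h)=3$, while every product of sections of strictly smaller degree landing in $H^0(X,h)$ lies in the two-dimensional subspace of sections vanishing on $E_5$; here one uses $h-E_5\sim E_1+E_4$, the second expression $h\sim E_2+E_3+E_5$, and the equalities $h^0(E_i+E_5)=1$ for $i=1,2,3,4$. This is exactly the situation handled by Lemma \ref{l2} and Proposition \ref{minimal} (iii), so a generator of degree $[h]$ is necessary, and one may take $s_6$ to be the pull-back of a general line of $\pp^2$. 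Finally, each of the four classes of $\BNef(X)$ of self-intersection $4$ is base point free, and one checks via Proposition \ref{minimal} that a generator is needed in each of these degrees as well.

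The only genuinely laborious step is the last one: assembling the finite list of candidate nef degrees permitted by Theorem \ref{gen}, discarding all but those in Table \ref{TableGen rank4} via Theorem \ref{teokoszul} and Propositions \ref{va} and \ref{ottem}, and certifying necessity through Lemma \ref{l2} and Proposition \ref{minimal}; this is carried out by the Magma routines of \cite{A.C.L}. The geometric part is routine once $h$ has been correctly singled out: the points requiring care are the base point freeness and hyperelliptic type of $|h|$, and the observation that the intersection graph (Figure \ref{fig1}, Table \ref{Intmat}) leaves no room for $B$ to be anything other than a nodal sextic with two bitangents through the node, any further degeneration being incompatible with $\NS(X)\cong V_3$.
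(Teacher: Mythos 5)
Your proposal is correct and follows essentially the same route as the paper: both single out the degree-$2$ nef class $h$ with $h\cdot E_5=0$ and $h\cdot E_i=1$ ($i\neq 5$), use Corollary \ref{section} to get the base-point-free double cover of $\pp^2$ contracting $E_5$ to the node, read off the two bitangent lines from the decompositions $h\sim E_1+E_4+E_5\sim E_2+E_3+E_5$, and then combine Theorem \ref{main-cox} with the count $h^0(h)=3$ against the two products $s_1s_4s_5,s_2s_3s_5$ (plus Proposition \ref{minimal} for the four square-$4$ classes) to get the ten necessary generators. The only differences are cosmetic: you construct $h$ directly from the $(-2)$-curves instead of citing $\BNef[2]$ from the tables, and you spell out the Stein factorisation and the bitangency computation in slightly more detail than the paper does.
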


\begin{proof}
Let $f_1,\dots, f_5$ be the classes of the $(-2)$-curves 
and $h=\BNef[2]$. Then  $h^2=2$,
 $h\cdot f_5=0$ and $h\cdot f_i=1$ for $i\neq 5$. Thus $h$ is nef.
 By Corollary \ref{section}, the linear system associated to $h$ is base point free 
 and thus defines a morphism $\psi:X\to \pp^2$ which contracts the $(-2)$-curve 
 with class $f_5$ to a point $p\in \pp^2$ and is branched along a plane sextic $B$
 with one node at $p$.
 Since $h=f_1+f_4+f_5=f_2+f_3+f_5$ the image by $\psi$ of the four $(-2)$-curves  
 of classes $f_1,\dots,f_4$ are two lines $L_1,L_2$ passing through $p$ 
 and tangent to $B$ in two more points.

 By Theorem \ref{main-cox}, a minimal set of generators of the Cox ring $R(X)$ has 
 the following degrees: 
\[
 f_1,\dots, f_{5},\ h,\ h_1,h_2,h_3,h_4, 
\]
where $h_i=\BNef[i]$ with $i=1,4,7,9$.  
Clearly any minimal generating set of $R(X)$ must contain the sections $s_1,\dots,s_{5}$ defining 
the $(-2)$-curves of $X$ and an element $s_6$ such that  $s_1s_4s_5, s_2s_3s_5, s_6$ is a basis of $H^0(h)$.
In the remaining degrees $h_1,\dots,h_4$ we known that there is at least one generator by Proposition \ref{minimal}.
\end{proof}

\begin{proposition}\label{uni3}
The moduli space of K3 surfaces with $\NS(X)\simeq (4)\oplus A_{3}$
is rational. 
\end{proposition}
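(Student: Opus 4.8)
The plan is to combine the projective model of Proposition~\ref{gen v3} with a slice (no-name) argument. By that proposition a very general $X$ in $\mathcal F_3$ is the minimal resolution of a double cover of $\pp^2$ branched along a plane sextic $B$ with one node $p$, together with two lines $L_1,L_2$ through $p$, each tangent to $B$ at two further points; moreover this branch datum is intrinsic to $X$ (the node is the image of the $(-2)$-curve $E_5$ of Figure~\ref{fig1}, and the two lines are the images of the reducible fibres $E_1+E_4$, $E_2+E_3$, which the N\'eron--Severi lattice does not order). Hence $\mathcal M_3$ is birational to $\mathcal S/{\rm PGL}_3(\cc)$, where $\mathcal S$ parametrizes triples $(B,\{L_1,L_2\},p)$ as above. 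Fixing $p=[0:0:1]$, $L_1=\{x=0\}$ and $L_2=\{y=0\}$ reduces the acting group to the stabilizer $\widehat G$ of this flag in ${\rm PGL}_3(\cc)$: it is the extension by the line-swap $\tau\colon[x:y:z]\mapsto[y:x:z]$ of the connected solvable group $G$ fixing $p$, $L_1$, $L_2$ (matrices whose first two rows are supported on the diagonal), and $\dim G=4$.

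Next I would fibre $\mathcal S$ (with the flag fixed) over the $4$-dimensional variety of tangency configurations, sending a sextic to its pair of tangency points on $L_1$ and its pair on $L_2$; this realizes $\mathcal S$ as an open subset of a $\pp^{16}$-bundle over $(\Sym^2\pp^1)\times(\Sym^2\pp^1)$, the fibre over a configuration being the linear system of sextics with the prescribed node at $p$ and with prescribed restrictions to $L_1$ and $L_2$ (dimension count $27-3-4-4=16$). On the base, $\widehat G$ acts through the product of the two affine groups of $L_i\setminus\{p\}$, with $\tau$ swapping the factors; this action has a dense orbit, and the stabilizer of a point of that orbit (tangency points normalized to, say, $\{0,1\}$ on each line) is the dihedral group $\Gamma$ of order $8$ generated by $\tau$ and the two involutions $\sigma_1,\sigma_2$ exchanging the two tangency points on $L_1$, resp.\ on $L_2$. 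Restricting the $\pp^{16}$-bundle to the dense orbit and descending identifies $\mathcal M_3\sim\mathcal S/\widehat G$ with $\pp(V)/\Gamma$, where $V$ is the $17$-dimensional space of normalized sextics, on which $\Gamma$ acts linearly.

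It then remains to prove $\pp(V)/\Gamma$ rational. Since $\dim V=17$ is odd, $V$ contains a $\Gamma$-stable line $\cc_\chi$; choosing a $\Gamma$-stable complement $W_{16}$ and passing to the affine chart $\{[\,c_0+w\,]:c_0\neq 0\}$ gives a $\Gamma$-stable dense open $\cc^{16}\subset\pp(V)$ on which $\Gamma$ acts linearly (namely as $W_{16}\otimes\chi^{-1}$), and this action is faithful: an element acting trivially on the dense set $\cc^{16}$ would act trivially on $\pp(V)$ and hence be trivial in $\Gamma\subset{\rm PGL}_3(\cc)$. A faithful linear representation of $D_4$ contains its $2$-dimensional irreducible representation $W$, on which $D_4$ acts with generically trivial stabilizer; splitting off $W$ and applying the no-name lemma, $\cc^{16}/\Gamma$ is birational to $(W/\Gamma)\times\cc^{14}$, and $W/\Gamma=\cc^2/D_4$ is rational by Castelnuovo's criterion (any quotient of $\cc^2$ by a finite group is unirational, hence rational). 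Therefore $\mathcal M_3$ is rational.

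The main obstacle is the middle step: one must verify carefully that $\widehat G$ acts on the variety of tangency configurations with a dense orbit, that the stabilizer of a normalized configuration is exactly the order-$8$ group $\Gamma$ (in particular that the line-swap survives the normalization and that $\Gamma\cong D_4$, rather than a larger or more delicate group for which rationality of the linear quotient would be problematic), and that the $\pp^{16}$-bundle is Zariski-locally trivial so that descent along the dense orbit is legitimate. Once the group and the slice are pinned down, the concluding rationality argument is routine; as an alternative to the last paragraph, one may first quotient by the normal Klein four-subgroup $\langle\sigma_1,\sigma_2\rangle$, obtaining an affine toric variety, and then divide by the residual $\zz/2$ generated by $\tau$.
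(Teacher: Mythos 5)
Your proposal is correct in outline (the verifications you flag at the end do all check out: the stabilizer $G$ of the flag maps isomorphically onto ${\rm Aff}(L_1\setminus\{p\})\times{\rm Aff}(L_2\setminus\{p\})$, so it acts transitively on the open set of tangency configurations with distinct points, the stabilizer of the normalized configuration is exactly $\langle\sigma_1,\sigma_2,\tau\rangle\cong D_4$, and the slice argument over the single dense orbit $\widehat G/\Gamma$ needs only equivariance, not local triviality of the bundle), but it takes a genuinely different route from the paper. The paper normalizes all five marked points at once --- the node $q$ and the four tangency points $p_1,p_2,r_1,r_2$ --- observes that the ${\rm PGL}_3(\cc)$-stabilizer of this labelled configuration is trivial, and identifies $\mathcal M_3$ with the resulting $16$-dimensional linear system of sextics, concluding rationality immediately. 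What this buys is brevity; what it costs is precisely the point your argument is built around: the labelling of the two tangent lines and of the two tangency points on each line is not intrinsic to the surface (the lattice only sees $R_{i1}\cdot R_{i2}=2$, which does not order the two intersection points), so relabelled configurations, once renormalized, give distinct sextics in the same linear system defining isomorphic K3 surfaces. The classifying map from the paper's $\pp^{16}$ to the moduli space of surfaces therefore factors through the quotient by your order-$8$ group (one checks, e.g.\ by comparing eigenspace dimensions of $\tau$ on the space of sextic forms, that $\Gamma$ acts faithfully and nontrivially on $\pp(V)$), so the paper's normalization by itself only yields unirationality, and your final paragraph --- odd-dimensional linear $D_4$-action, splitting off the $2$-dimensional irreducible which any faithful representation of $D_4$ must contain, no-name lemma, and Castelnuovo for $\cc^2/D_4$ --- is exactly what is needed to upgrade this to rationality of $\pp^{16}/\Gamma$. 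The one place where your write-up is glib is the faithfulness of $\Gamma$ on the affine chart: "trivial on $\pp(V)$ hence trivial in $\Gamma$" needs the observation that no nontrivial element of $\Gamma\subset{\rm PGL}_3(\cc)$ can act by a single scalar on the $17$-dimensional space $V$ of normalized sextic forms, which follows from the eigenspace count just mentioned; with that supplied, the argument is complete.
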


\begin{proof}
Let $q$ be the nodal point of the branch locus $B$ and let $p_{1},p_{2}$
(respectively $r_{1},r_{2})$ the other intersection points of the tangent
line $L_{1}$ (respectively $L_{2}$) to the sextic. Using automorphisms of $\pp^2$,
we can impose that 
\[
q=(0,1,0),\,p_{1}=(1,0,0),\,p_{2}=(-1,1,0),\,r_{1}=(0,0,1),\,r_{2}=(0,1,-1),
\]
and there are no more symmetries. One computes that the projective
linear system of smooth plane sextics with a node at $q$ and tangent to the lines
$L_{1},L_{2}$ at $p_{1},p_{2},r_{1},r_{2}$ is an open set in the
affine space $\mathbb{A}^{16}$, therefore the moduli space of K3 surfaces with
$\NS(X)\simeq(4)\oplus A_{3}$ is rational. 
\end{proof}

\begin{remark}
The surface $X$ admits four different morphisms $X\to \pp^3$ mapping onto a quartic surface 
with a singular point of type $A_3$, defined by the classes $\BNef[i]$, $ i=2,4,7,9$.
\end{remark}

\subsection*{The family $\mathcal F_4$}

Let $X$ be a K3 surface with $\NS(X)\cong V_4=U \oplus 2A_1$.
By Theorem \ref{main-eff}, $X$ contains five  $(-2)$-curves 
whose intersection matrix is given Table \ref{Intmat} and described in Figure \ref{fig2}.
The Hilbert basis of the nef cone of $X$ contains five classes, 
one of them is the class $\BNef[5]$ of an elliptic fibration $\BNef[5]$ with a section 
and two reducible fibers of type $\tilde A_1$. 

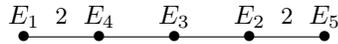
\begin{figure}[H]\label{fig2}
\begin{center}
\begin{tikzpicture}[scale=1]

\draw (0,0) -- (4,0);


\draw (0,0) node {$\bullet$};
\draw (1,0) node {$\bullet$};
\draw (2,0) node {$\bullet$};
\draw (3,0) node {$\bullet$};
\draw (4,0) node {$\bullet$};

\draw (0,0) node [above]{$E_{1}$};
\draw (1,0) node [above]{$E_{4}$};
\draw (2,0) node [above]{$E_{3}$};
\draw (3,0) node [above]{$E_{2}$};
\draw (4,0) node [above]{$E_{5}$};

\draw (0.5,0.5) node [below]{\small $2$};
\draw (3.5,0.5) node [below]{\small $2$};

\end{tikzpicture}
\end{center}
\caption{Intersection graph of $(-2)$-curves for $\mathcal F_4$} 
\end{figure}




 In this case, the Cox ring has been computed in \cite[Proposition 6.7, ii)]{A.H.L},
 where $X$ is described as a double cover of the Hirzebruch surface $\ff_4$ blown-up  at two general points $p,q$.
 We will use the following notation for curves in $\ff_4$: $F_1, F_2$ are the two fibers of the projection $\ff_4\to \pp^1$ 
 passing through $p$ and $q$, $S_1$ is the curve in $\ff_4$ with $S_1^2=-4$ and
 $S_2$ is a curve passing through $p$ and $q$ with $S_2^2=4$ and $S_1\cdot S_2=0$. We will call $S_1$ also its pull-back in the blow-up of $\ff_4$ at $p,q$.
 
 We recall these properties in the following proposition:
 

\begin{proposition}\label{gen v4}
Let $X$ be a K3 surface with $\NS(X)\cong  V_4=U \oplus 2A_1$. Then
\begin{enumerate}
\item there is a double cover  $\pi:X\to Y$, where $Y$ is a blow-up of $\ff_4$ at two general points $p,q$, 
branched along $S_1$ and a smooth curve $B$ of genus $8$;
\item the surface has five $(-2)$-curves, which are the preimages by $\pi$ of: $S_1$,
the proper transforms of  $F_1,F_2$ and the two exceptional divisors over $p$ and $q$;
 \item a minimal generating set of the Cox ring of  $X$  is $s_1,\dots,s_7$, where $s_1, \dots, s_{5}$ define the $(-2)$-curves, $s_6$ defines the preimage of the proper transform of $S_2$ and $s_7$  defines  $\pi^{-1}(B)$;
\item for a very general $X$ as before we have an isomorphism 
\[
R(X)\to \cc[T_1,\dots, T_7]/I,\,\,s_i\mapsto T_i,
\]
where the degrees of the generators $T_i$ for $i=1, \dots, 7$ are given by the columns of the following matrix
\[
\left(
\begin{array}{ccccccccc}
\,\,\,\,0& -1 &\,\,\,\,1 & -1 & \,\,\,\,0 & -2&-3  \\
\,\,\,\,0& \,\,\,\,0 & -1 &\,\,\,\, 0 &\,\,\,\,0 & -2 & -3 \\
\,\,\,\,0& \,\,\,\,1 & \,\,\,\,0 &\,\,\,\, 0 & -1 &\,\,\,\,1&\,\,\,\,1\\
-1&\,\,\,\,0&\,\,\,\, 0 & \,\,\,\,1 & \,\,\,\,0 &\,\,\,\, 1 & \,\,\,\,1  
\end{array}
\right)
\]
and the ideal $I$ is generated by the following polynomial
\begin{eqnarray*}
T_7^2-\tilde f(T_1,\dots,T_6),
\end{eqnarray*}
where $\tilde f(T_1,\dots,T_6)=f(T_1^2,T_2,\dots,T_6)$ and $f$ is the defining polynomial 
of $B$ in the Cox ring of $Y$.
\end{enumerate}
\end{proposition}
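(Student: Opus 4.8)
The plan is to leverage the structure theory of Cox rings for double covers, as developed in the book \cite{A.D.H.L}, together with the explicit description of $Y$ as a blow-up of $\ff_4$ at two general points. Parts (i)--(iii) are essentially a restatement of \cite[Proposition 6.7, ii)]{A.H.L}, so I would begin by recalling that description: the class $\BNef[5]$ is an elliptic fibration with a section, whose fibers realize $X$ as a double cover $\pi:X\to Y$ with $Y$ the blow-up of $\ff_4$ at two general points $p,q$, and the branch locus is $S_1+B$ with $B$ smooth of genus $8$. The five $(-2)$-curves are then identified geometrically as the preimages of $S_1$, the proper transforms of the fibers $F_1,F_2$, and the two exceptional curves of $Y\to\ff_4$; the classes $s_6,s_7$ come from the preimage of the proper transform of $S_2$ and from $\pi^{-1}(B)$ respectively. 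That $\{s_1,\dots,s_7\}$ is a \emph{minimal} generating set follows from Theorem \ref{main-cox} (each $(-2)$-class and the degree of $s_6$ lie in $\BEff(X)$, hence are forced) together with the verification, via Proposition \ref{minimal}, that no proper subset suffices — in particular $s_7$ is needed because $\pi^{-1}(B)$ is the only divisor in its degree.

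For part (iv), the key input is the general principle that if $\pi:X\to Y$ is a double cover branched along $D_0\in|{\mathcal L}^{\otimes 2}|$ with $Y$ a Mori dream space, then $R(X)$ is obtained from $R(Y)$ by adjoining one generator $T_7$ in the appropriate degree (the class of $\pi^{-1}$ of half the ramification contribution off the fixed part, i.e. $[\pi^{-1}B]$) together with the single relation $T_7^2=$ (pullback of the defining section of $D_0$), provided the covering involution acts suitably on the grading; see the double-cover construction in \cite[\S 3.3, \S 4.1]{A.D.H.L} and its application to K3 surfaces in \cite{A.H.L}. First I would compute $R(Y)$: since $Y$ is a smooth rational surface obtained from $\ff_4$ by two blow-ups, it is a Mori dream space, and its Cox ring is a polynomial ring on six generators — the classes of $S_1$, the two exceptional curves $E_p,E_q$, the two fiber components $\tilde F_1,\tilde F_2$, and the proper transform $\tilde S_2$ — once one checks there are no relations, which holds here because these six curves generate $\Eff(Y)$ and the expected dimension count (rank of $\Cl(Y)$ is $4$, so six generators with two relations would be generic, but the specific intersection pattern forces the relation ideal to be trivial; this is the standard computation for del Pezzo-type or Hirzebruch-blow-up Cox rings). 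Writing $f(T_1,\dots,T_6)$ for the section of $S_1+B$ cutting out the branch curve in $R(Y)$, and observing that $\pi^*$ of the generator dual to $S_1$ is $T_1^2$ (since $\pi$ is ramified along $S_1$), I obtain $\pi^*f = f(T_1^2,T_2,\dots,T_6)=\tilde f$, and the relation $T_7^2=\tilde f$. Matching the degrees against $\Cl(X)\cong U\oplus 2A_1$ via the pushforward/pullback on class groups gives the displayed matrix.

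The main obstacle I anticipate is not the formal double-cover machinery but the genericity hypothesis: one must show that for a \emph{very general} $X$ in $\mathcal F_4$ the map $R(X)\to\cc[T_1,\dots,T_7]/(T_7^2-\tilde f)$ is an \emph{isomorphism}, i.e. that the relation ideal is generated by exactly that one element and nothing more. Surjectivity is clear since the $T_i$ map to a generating set. For injectivity one compares Hilbert series: the quotient $\cc[T_1,\dots,T_7]/(T_7^2-\tilde f)$ is a complete intersection with computable multigraded Hilbert series, and one must check it agrees with the multigraded Hilbert series of $R(X)=\bigoplus_{w}H^0(X,\oo_X(w))$, which is determined by Riemann--Roch on the K3 surface $X$ together with vanishing of $h^1$ in the relevant degrees (using that nef divisors on a K3 with no $(-2)$-curve meeting an elliptic fiber in a single point have no higher cohomology, cf. Theorem \ref{bl} and the vanishing statements in Theorem \ref{teokoszul}, Proposition \ref{ottem}). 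The ``very general'' hypothesis enters precisely to guarantee $\Cl(X)$ has rank $4$ and that $B$ is a sufficiently general member of its linear system so that $\tilde f$ is irreducible and the complete intersection is reduced and irreducible; I would make this explicit by exhibiting the locus of such $X$ as the complement of countably many proper subvarieties in the period domain. Once the Hilbert series match and surjectivity is in hand, injectivity — hence the isomorphism — follows.
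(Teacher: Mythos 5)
The paper itself gives no proof of this proposition: it is stated as a recollection of \cite[Proposition~6.7, ii)]{A.H.L}, to which the whole computation is deferred. Your reconstruction---realize $X$ as a double cover of $Y$, note that $R(Y)$ is a polynomial ring on six generators, adjoin the section $T_7$ cutting out $\pi^{-1}(B)$ with the single relation $T_7^2=\tilde f$, and conclude the isomorphism by comparing a surjection of integral domains of the same dimension (or Hilbert series)---is the standard double-cover argument and is in substance exactly what the cited source does, so your overall strategy is the intended one and is sound.

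Three points need repair. First, the double cover $\pi:X\to Y$ does \emph{not} arise from ``the fibers of the elliptic fibration $\BNef[5]$''; it is induced by the hyperelliptic class $\BNef[3]$ (square $4$, meeting the elliptic fibre class in $2$), whose morphism maps $X$ two-to-one onto a quadric cone and factors through $\ff_2$, branched along $S+B$ with $S^2=-2$ and $B\cdot S=2$; blowing up the two points of $B\cap S$ yields $Y$, which is also the blow-up of $\ff_4$ at two general points (this is Remark~\ref{F2}). Second, your justification that $R(Y)$ is a polynomial ring (``six generators with two relations would be generic\dots'') is not an argument and is dimensionally off; the clean reasons are either that $Y$ is toric---for a suitable splitting of $\oo\oplus\oo(4)$ the points $p,q$ lie on a section disjoint from $S_1$ and may be taken to be torus-fixed, so $R(Y)$ is the polynomial ring on the six invariant divisors $S_1,\tilde S_2,\tilde F_1,\tilde F_2,E_p,E_q$---or that these six sections generate $R(Y)$, whence the surjection $\cc[T_1,\dots,T_6]\to R(Y)$ between domains of Krull dimension $6$ has zero kernel. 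Third, minimality of $s_7$ does not follow from ``$\pi^{-1}(B)$ is the only divisor in its degree''; one must show $H^0(X,[\pi^{-1}(B)])$ is not spanned by monomials in $s_1,\dots,s_6$, which holds because $s_7$ is anti-invariant under the covering involution while the relevant monomials in lower degrees fail to span the anti-invariant part (in the paper's framework this is the check of Proposition~\ref{minimal} recorded in Table~\ref{TableGen rank4} for the degree $\BNef[3]+\BNef[4]$). None of these affects the viability of the approach.
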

 

%
\begin{remark} \label{F2}
The class $\BNef[3]$ has self-intersection $4$, is nef and has zero 
intersection with $f_2,f_3,f_4$. 
Moreover, it has intersection $2$ with the class $\BNef[5]$ of the 
unique elliptic fibration of $X$, thus it is hyperelliptic. 
It can be easily checked that $\BNef[3]$ is not equal to $3\BNef[5]+f_3$,
thus it is base point free by Theorem  \ref{bl}.
Moreover, $\BNef[3]=2\BNef[5]+2f_3+f_2+f_4$, thus
by \cite[Proposition 5.7, iii)]{SD} the image of the associated 
degree two morphism $\varphi: X\to \pp^3$ is a quadric cone. 
The morphism $\varphi$ factorizes through a double cover of the Hirzebruch surface 
$\eta:X\to\mathbb{F}_{2}$, and is branched along a divisor of the form 
$S+B$, where $S$ is the unique curve in $\mathbb F_2$
with $S^{2}=-2$ and $B\in|3S+8F|$ (here $F$ is a fiber). We denote
by $p,q$ the two intersection points of $B$ and $S$. 
The curves of classes $f_{1},f_{5}$ are mapped by $\eta$ to the two fibers 
of $\mathbb F_2$ through $p,q$, the curve of class $f_{3}$ has image $S$, and the curves
of classes  $f_{2}, f_{4}$ are contracted to the points $p,q$. 
Observe that this model is related to the one described in Proposition \ref{gen v4} as follows: 
the blow-up of $\mathbb F_2$ at $p,q$ is isomorphic 
to the blow-up of $\mathbb F_4$ at two general points, obtained 
contracting the proper transforms of the two fibers of $\mathbb F_2$ through $p,q$.
\end{remark}

\begin{corollary} \label{uni4}
The moduli space of K3 surfaces $X$ with $\NS(X)\simeq U\oplus 2{A}_{1}$
is unirational.
\end{corollary}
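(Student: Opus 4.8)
The plan is to realize $\mathcal M_4$ as a dominant rational image of a projective space, using the double cover model of Proposition \ref{gen v4}. First I would fix the target surface: by Proposition \ref{gen v4}(i) every $X$ with $\NS(X)\cong U\oplus 2A_1$ is a double cover $\pi\colon X\to Y$, where $Y$ is the blow-up of $\ff_4$ at two general points $p,q$, branched along $S_1+B$ with $B$ a smooth curve of genus $8$. Since $\Aut(\ff_4)$ acts with a dense orbit on pairs of general points of $\ff_4$, I may assume that $p,q$ are a fixed pair, so that $Y$ is a fixed rational surface; write $E_1,E_2$ for the exceptional curves and $C_0,f$ for (the pullbacks of) the negative section and a ruling fiber of $\ff_4$, so that $S_1=C_0$. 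The requirement that $X$ be a K3 surface forces $S_1+B\sim -2K_Y$, and since $K_{\ff_4}=-2C_0-6f$ this means that $B$ varies in the fixed linear system $\mathcal L:=|{-2K_Y}-S_1|=|3C_0+12f-2E_1-2E_2|$, i.e. $B$ is the strict transform of a curve of type $3C_0+12f$ on $\ff_4$ having nodes at $p$ and $q$. A direct Riemann--Roch computation gives $\dim\mathcal L=21$.

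Next I would build the family and the rational map. Let $\mathcal L^{\circ}\subseteq\mathcal L$ be the (nonempty, for general $p,q$) open subset of curves $B$ which are smooth and irreducible; such a $B$ satisfies $B\cdot S_1=0$ and, being irreducible and distinct from $S_1$, is disjoint from $S_1$, so the branch divisor $S_1+B$ is smooth and the associated double cover $X_B\to Y$ (with divisor class $-K_Y$) is a smooth K3 surface. This produces a family of K3 surfaces over $\mathcal L^{\circ}$, hence a rational map $\rho\colon\mathcal L\dashrightarrow\mathcal M_4$. By Proposition \ref{gen v4}(i), together with the transitivity of $\Aut(\ff_4)$ on pairs of general points, every member of $\mathcal F_4$ arises as some $X_B$ with $B\in\mathcal L^{\circ}$, so $\rho$ is dominant. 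As a consistency check, the construction always produces the sublattice $U\oplus 2A_1$ of rank four inside $\NS(X_B)$, and $\dim\mathcal L-\dim\Aut(\ff_4;p,q)=21-5=16=\dim\mathcal M_4$, which forces equality of lattices for very general $B$.

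Finally, since $\mathcal L\cong\pp^{21}$ is rational and $\rho\colon\mathcal L\dashrightarrow\mathcal M_4$ is dominant, $\mathcal M_4$ is unirational, which is the assertion of the corollary. I expect the only delicate point to be the claim that the very general $X_B$ in the family has Picard number exactly four with Néron--Severi lattice isometric to $U\oplus 2A_1$, rather than lying in a proper Noether--Lefschetz sublocus; this is handled either by the dimension count just given or, more directly, by Proposition \ref{gen v4}(i), which already guarantees that the image of $\rho$ is all of $\mathcal M_4$. The remaining verifications --- that $\mathcal L^{\circ}$ is nonempty, that $X_B$ is a smooth K3 surface, and the value $\dim\mathcal L=21$ --- are routine.
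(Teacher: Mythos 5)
Your argument is correct and follows the same overall strategy as the paper (parametrize the branch curves of a double-cover model by a linear system on a fixed rational surface, then use dominance onto the moduli space), but it uses a different model. The paper does not work on the blow-up $Y$ of $\ff_4$: via Remark \ref{F2} it first passes to the hyperelliptic model $X\to\ff_2$ given by the class $\BNef[3]$, so that the branch curve is $S+B$ with $B\in|3S+8F|$ \emph{ample} on the minimal surface $\ff_2$; then Kodaira vanishing immediately gives $\dim|B|=23$, and one mods out by the full $7$-dimensional $\Aut(\ff_2)$ to get $23-7=16$. You instead stay with the model of Proposition \ref{gen v4}, fix the two blown-up points using transitivity of $\Aut(\ff_4)$ on general pairs (which holds: the pairs in distinct fibers off the negative section form a single orbit, and the stabilizer has dimension $9-4=5$), and parametrize by $\mathcal L=|{-2K_Y}-S_1|$. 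Both routes are valid; the paper's switch to $\ff_2$ buys a cleaner cohomology computation and avoids handling the point-stabilizer, while yours has the advantage of invoking only the model already stated in Proposition \ref{gen v4}(i), and of making explicit that the exact dimension is merely a consistency check, dominance plus rationality of $\mathcal L$ already giving unirationality. The one step where your write-up is thinner than it should be is ``a direct Riemann--Roch computation gives $\dim\mathcal L=21$'': Riemann--Roch only gives $\chi(\oo_Y(B))=22$, and on $Y$ the divisor $B-K_Y$ meets $S_1$ negatively, so Kodaira/Mumford vanishing does not apply directly to kill $h^1$. This is easily repaired (e.g.\ count curves of class $3C_0+12f$ on $\ff_4$, a $28$-dimensional space, subject to the $6$ conditions of having double points at $p$ and $q$, which are independent for general $p,q$; this gives $h^0=22$ and hence also $h^1=0$), and in any case the precise value is not needed for the unirationality conclusion, so this is a minor gap in justification rather than a flaw in the approach.
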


\begin{proof}
By Remark \ref{F2} we know that $X$ is the double cover of $\mathbb F_{2}$ branched 
along a  curve of type $B+S$, where $S^2=-2$ and $B\in |3S+8F|$ is smooth.
By \cite[Chapter V, Corollary 2.18]{RH}, the divisors $B$
and $B-K_{\ff_2}=5S+12F$ are ample. Thus $h^{2}(\ff_2,B)=0$ 
and by Serre duality and Kodaira vanishing Theorem $h^{1}(\ff_2,B)=h^{1}(\ff_2,K_{\ff_2}-B)=0$.
It follows by the Riemann-Roch Theorem
that $h^{0}(\ff_2,B)=24.$ The dimension of the
automorphism group of $\ff_{2}$ is $7$, thus the quotient of $|B|\simeq\pp^{23}$
by that automorphism group is a $16$ dimensional unirational variety
which is birational to the moduli space of K3 surfaces $X$ with $\NS(X)\simeq U\oplus 2A_{1}$.
\end{proof}

 \subsection*{The family $\mathcal F_5$}

Let $X$ be a K3 surface with $\NS(X)\cong V_5=U(2) \oplus 2A_1$.
By Theorem \ref{main-eff} $X$ contains six classes of $(-2)$-curves 
whose intersection matrix is given in Table \ref{Intmat} and described in Figure \ref{fig3}.
The Hilbert basis of the nef cone of $X$ contains five classes.
The classes $\BNef[3]$, $\BNef[4]$ and $\BNef[5]$ are classes of elliptic fibrations without sections and 
each having two reducible fibers of type $\tilde A_1$. 

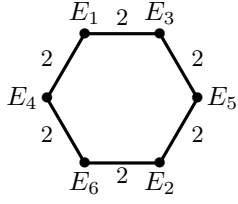
\begin{figure}[H]
\begin{center}
\begin{tikzpicture}[scale=1]

\draw [very thick] (1,0) -- (0.5,0.86);
\draw [very thick] (0.5,0.86) -- (-0.5,0.86);
\draw [very thick] (-1,0) -- (-0.5,0.86);
\draw [very thick] (-1,0) -- (-0.5,-0.86);
\draw [very thick] (-0.5,-0.86) -- (0.5,-0.86);
\draw [very thick] (1,0) -- (0.5,-0.86);

\draw (1,0) node {$\bullet$};
\draw (0.5,0.86) node {$\bullet$};
\draw (-1,0) node {$\bullet$};
\draw (-0.5,0.86) node {$\bullet$};
\draw (-0.5,-0.86) node {$\bullet$};
\draw (0.5,-0.86)  node {$\bullet$};

\draw (1,0) node  [right]{$E_{5}$};
\draw (0.5,0.86) node [above]{$E_{3}$};
\draw (-1,0) node  [left]{$E_{4}$};
\draw (-0.5,0.86) node  [above]{$E_{1}$};
\draw (-0.5,-0.86) node  [below]{$E_{6}$};
\draw (0.5,-0.86) node   [below]{$E_{2}$};

\draw (1,0.3) node [above]{\small $2$};
\draw (1,-0.7) node [above]{\small $2$};
\draw (0,-0.8) node [below]{\small $2$};
\draw (-1,-0.7) node [above]{\small $2$};
\draw (-1,0.3) node [above]{\small $2$};
\draw (0,1.3) node [below]{\small $2$};

\end{tikzpicture}
\end{center}
\caption{Intersection graph of $(-2)$-curves of $\mathcal F_5$} \label{fig3}
\end{figure}

\begin{proposition}\label{gen v5}
Let $X$ be a K3 surface with $\NS(X)\cong  V_5=U(2) \oplus 2A_1$. Then
\begin{enumerate}
\item there is a minimal resolution $\varphi:X\to \overline X$ of a double cover  
$\pi:\overline X\to \pp^2$ branched along a plane sextic $B$ with three nodes $p_1,p_2,p_3$;
\item the surface $X$ has six $(-2)$-curves: the exceptional divisors $E_1,E_2,E_3$ over the three nodes $p_1,p_2,p_3$ 
and the curves $R_1,R_2,R_3$ such that $\pi\varphi(R_i)$ is the line through $p_j,p_k$ with $j,k\not=i$;
\item a minimal generating set of the Cox ring of  $X$ is $s_1,\dots,s_7$, where  $s_1, \dots, s_{6}$ define the $(-2)$-curves and $s_7$ defines the ramification curve of $\pi\varphi$;
\item for a very general $X$ as before we have an isomorphism 
\[
 \cc[T_1,\dots, T_7]/I\to R(X),\quad T_i\mapsto s_i,
\]
where the degrees of the generators $T_i$ for $i=1, \dots, 7$ are given by the columns of the following matrix
\[
\left(
\begin{array}{ccccccccc}
\,\,\,\,0& \,\,\,\,0 & \,\,\,\,0 & -1 & -1 &\,\,\,\, 0& -2 \\
-1& -1 &\,\,\,\, 0 &\,\,\,\, 0 &\,\,\,\, 0 &\,\,\,\, 0 & -2\\
\,\,\,\,0&\,\,\,\, 1 &\,\,\,\, 0 & \,\,\,\,1& \,\,\,\,0& -1& \,\,\,\,1\\
\,\,\,\,1&\,\,\,\, 0& -1& \,\,\,\,0& \,\,\,\,1& \,\,\,\,0&\,\,\,\,1 
\end{array}
\right)
\]
and the ideal $I$ is generated by the following polynomial:
\begin{eqnarray*}
T_7^2-f(T_1T_3T_5, T_2T_5T_6,T_1T_4T_6),
\end{eqnarray*}
where $f(x_0,x_1,x_2)\in \cc[x_0,x_1,x_2]$ is a defining polynomial of the plane sextic $B$.
\end{enumerate}
\end{proposition}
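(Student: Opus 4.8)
The plan is to produce the double cover model from a suitable degree-two linear system (parts (1)--(2)), to read off a minimal generating set of $R(X)$ from Theorem~\ref{main-cox} (part (3)), and to obtain the presentation in part (4) by comparing $R(X)$ with the Cox ring of the singular quotient surface $\overline X$. For (1)--(2), I would first note that no elliptic fibration of $X$ has a section, since among the five elements of $\BNef(X)$ only $\BNef[3],\BNef[4],\BNef[5]$ have self-intersection zero. I would then take a nef class $h$ with $h^2=2$ whose orthogonal complement among the $(-2)$-curves consists of three pairwise disjoint curves $E_1,E_2,E_3$, whose existence is read off from Table~\ref{Intmat} and Theorem~\ref{main-eff}. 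By Corollary~\ref{section} the system $|h|$ is base point free, and $h^2=2$ together with Theorem~\ref{hyp}~c) shows that $\varphi_h\colon X\to\pp^2$ is a degree two morphism; it contracts exactly $E_1,E_2,E_3$, hence factors as $X\to\overline X\to\pp^2$ with $\varphi\colon X\to\overline X$ contracting $E_1,E_2,E_3$ to ordinary double points and $\pi\colon\overline X\to\pp^2$ a double cover. From $K_X=\varphi_h^*(K_{\pp^2}+\tfrac12 B)=0$ the branch curve $B$ is a plane sextic whose only singularities are the three nodes $p_i=\varphi_h(E_i)$, since any further singularity or non-reduced component of $B$ would be incompatible with $\varphi$ contracting exactly $E_1,E_2,E_3$; this is (1). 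For (2), I would use the linear equivalences $h\sim R_i+E_j+E_k$ (for $\{i,j,k\}=\{1,2,3\}$) read off from Table~\ref{Intmat}, with $R_1,R_2,R_3$ the remaining three $(-2)$-curves: since $R_i+E_j+E_k$ is an effective member of $|h|$ and $E_j,E_k$ are $\varphi_h$-exceptional, $\pi\varphi(R_i)$ is the line through $p_j$ and $p_k$, and Theorem~\ref{main-eff} guarantees these six curves exhaust the $(-2)$-curves of $X$.

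For part (3), by Theorem~\ref{main-cox} (equivalently \cite[Proposition~6.7, i)]{A.H.L}) the ring $R(X)$ has a minimal homogeneous generating set of cardinality seven. The six classes of $(-2)$-curves lie in $\BEff(X)$, so any generating set must contain the defining sections $s_1,\dots,s_6$ of $E_1,E_2,E_3,R_1,R_2,R_3$; the seventh degree is $3h$, the class of the ramification divisor $\widetilde B$ of $\pi\varphi$ (the strict transform of $\pi^{-1}(B)$), and I would take $s_7$ to be a defining section of $\widetilde B$, yielding the asserted minimal generating set.

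For part (4), the key point is that the Cox ring of the singular surface $\overline X$ is $R(\overline X)=\cc[x_0,x_1,x_2,w]/(w^2-f)$ with weights $(1,1,1,3)$, where $f$ is a defining polynomial of $B$, and that $\varphi$ contracts only curves over rational singularities, so that $\Gamma(X,\oo_X(nh))=\Gamma(\overline X,\oo_{\overline X}(n))$ for all $n$ and hence $R(\overline X)=\bigoplus_n R(X)_{nh}$ sits inside $R(X)$. After moving the three nodes of $B$ to the coordinate points, $\pi\varphi^*x_\ell$ cuts out the strict transform of the $\ell$-th coordinate line together with the two exceptional curves it meets, so it equals up to scalar the square-free monomial in $s_1,\dots,s_6$ appearing in the statement, and likewise $\pi\varphi^*w$ equals $s_7$ times the exceptional monomial recording the multiplicities of the $E_i$ in $\varphi_h^*B$. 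Substituting into $w^2-f$ and cancelling the common exceptional monomial factor — which divides the substituted $f$ precisely because $B$ is nodal at the coordinate points — gives the relation $s_7^2=\widetilde f(s_1,\dots,s_6)$, hence a graded surjection $\cc[T_1,\dots,T_7]/(T_7^2-\widetilde f)\to R(X)$, $T_i\mapsto s_i$, with the stated degree matrix. To see it is an isomorphism I would argue that for very general $B$ the polynomial $\widetilde f$ is not a square, so $T_7^2-\widetilde f$ is irreducible and the source is an integral domain of Krull dimension $7-1=6=\dim X+\rho(X)=\dim R(X)$; since $R(X)$ is also an integral domain of dimension $6$, a surjection of finitely generated $\cc$-domains of equal dimension has trivial kernel, and we are done.

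The hard part will be part (4), specifically the claim that the ideal of relations is principal. The dimension argument reduces this to two inputs: that $s_1,\dots,s_7$ already generate $R(X)$ (so that the surjection exists) and that $\widetilde f$ is irreducible for very general $B$. The first is exactly the content of Theorem~\ref{main-cox} / \cite[Proposition~6.7, i)]{A.H.L} and is where the genuine work lies, since for $\NS(X)\cong V_5$ the degrees $[2(F+F')]$ with $F\cdot F'=2$ could a priori carry additional generators and excluding this requires the techniques of Section~\ref{cox}; the ``very general'' hypothesis is used only to make $\widetilde f$ irreducible and $B$ smooth away from its three nodes.
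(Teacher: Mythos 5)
Your argument for (i)--(iii) is essentially the paper's: the authors also take $h=\BNef[1]$ (which has $h^2=2$ and is orthogonal to the three pairwise disjoint curves of classes $f_1,f_5,f_6$), invoke Corollary \ref{section} for base point freeness, and read the splittings $h=f_1+f_3+f_5=f_2+f_5+f_6=f_1+f_4+f_6$ off the tables to identify the lines; for (iii)--(iv) the paper simply cites \cite[Proposition 6.7, i)]{A.H.L}, whereas you supply the presentation argument yourself. Your derivation of (iv) — pull back the weighted-hypersurface coordinate ring of $\overline X\subset\pp(1,1,1,3)$, identify $\varphi_h^*x_\ell$ with the square-free monomials of degree $h$, divide the substituted sextic by the square of the exceptional monomial using nodality at the coordinate points, then conclude by the domain-plus-dimension count $\dim\cc[T_1,\dots,T_7]/(T_7^2-\tilde f)=6=\dim X+\rk\Cl(X)$ — is exactly the template the paper applies to the families $\mathcal F_6,\mathcal F_7,\mathcal F_{10},\mathcal F_{12}$, so this is a legitimate and arguably more self-contained route; note that it also produces the degree-homogeneous form of the relation (the displayed relation $T_7^2-f(T_1T_3T_5,T_2T_5T_6,T_1T_4T_6)$ is not homogeneous as printed, the two sides differing by $2(f_1+f_5+f_6)$, and your $\tilde f$ fixes this). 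Two small slips, neither fatal: the seventh generator has degree $3h-E_1-E_2-E_3=\BNef[1]+\BNef[2]=(-2,-2,1,1)$, not $3h$ (your subsequent computation with $\varphi_h^*w=s_7\cdot s_{E_1}s_{E_2}s_{E_3}$ uses the correct class, so only the parenthetical identification is wrong); and your justification that no elliptic fibration has a section — counting the square-zero classes in $\BNef(X)$ — does not actually prove it; the correct quick reason is that every intersection number in $U(2)\oplus 2A_1$ is even, so no class can meet a fiber class with multiplicity one. As you correctly observe, the genuinely hard input is that $s_1,\dots,s_7$ generate $R(X)$ (in particular ruling out extra generators in degrees $2(F+F')$), and both you and the paper ultimately take this from Theorem \ref{main-cox} / \cite[Proposition 6.7, i)]{A.H.L}.
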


\begin{proof}
Let $f_1,\dots,f_5$ be the classes of the $(-2)$-curves (see Table \ref{Table1})  and let  $h=\BNef[1]$. 
Then  $h^2=2$, $h\cdot f_i=0$ for $i=1,5,6$ and $h\cdot f_i=2$ for $i=2, 3,4$. 
Thus $h$ is nef. By Corollary \ref{section} the linear system associated to $h$ is base point free and 
thus defines a degree two morphism to $\pp^2$ branched along a plane sextic $B$ which contracts 
the $(-2)$-curves of classes $f_1, f_5, f_6$. Since such classes have zero intersection, then 
the three $(-2)$-curves are disjoint, thus are mapped to three distinct points $p_1,p_2,p_3\in \pp^2$. 
The plane sextic $B$ has three nodes at $p_1,p_2,p_3$.
Moreover, since $h=f_1+f_3+f_5=f_2+f_5+f_6=f_1+f_4+f_6$, the $(-2)$-curves 
with classes $f_2,f_3,f_4$ are mapped to the three lines passing through $p_2,p_3$,
$p_1,p_2$ and $p_1,p_3$ respectively.



The Cox ring of $X$ has been computed in \cite[Proposition 6.7, i)]{A.H.L},
where $X$ is also described as the double cover of 
 a smooth quadric surface $\ff_0$ blown-up at two general points.
 \end{proof}

 \begin{corollary}\label{uni5}
The moduli space of K3 surfaces with $\NS(X)\simeq U(2)\oplus 2{A}_{1}$
is unirational.
\end{corollary}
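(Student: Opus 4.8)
The plan is to realise $\mathcal M_5$ as dominated by a projective space, exactly as in the proofs of Corollaries \ref{uni1} and \ref{uni4}. By Proposition \ref{gen v5} every K3 surface $X$ with $\NS(X)\cong V_5$ is the minimal resolution of a double cover $\pi\colon\overline X\to\pp^2$ branched along a plane sextic $B$ with three nodes $p_1,p_2,p_3$, and the isomorphism class of $X$ is determined by the pair $(\pp^2,B)$; moreover, for a very general such $X$ the three nodes are not collinear, since otherwise the proper transform of the line through them would contribute extra $(-2)$-curves incompatible with the intersection graph of Figure \ref{fig3}. Using ${\rm PGL}_3(\cc)$ we may therefore normalise $p_1=(1,0,0)$, $p_2=(0,1,0)$, $p_3=(0,0,1)$, and $\mathcal M_5$ becomes birational to the quotient of the variety $\mathcal S$ of plane sextics singular at $p_1,p_2,p_3$ and smooth elsewhere by the subgroup $G\subset{\rm PGL}_3(\cc)$ stabilising $\{p_1,p_2,p_3\}$, whose identity component is the two-dimensional torus of diagonal matrices modulo scalars.

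Next I would carry out the dimension count. One has $\dim H^0(\pp^2,\oo_{\pp^2}(6))=28$, and imposing a node at a point cuts out three linear conditions on sextics (the vanishing of the three partial derivatives, that of the polynomial itself being then automatic by Euler's relation); since these nine conditions are independent for points in general position, the sextics singular at $p_1,p_2,p_3$ form a projective space of dimension $27-9=18$, of which $\mathcal S$ is a dense open subset. Hence $\mathcal S$ is rational and $\dim\mathcal M_5=18-\dim G=16$, matching the dimension $20-4=16$ of a moduli space of K3 surfaces polarised by a lattice of rank four.

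Finally, Proposition \ref{gen v5} yields a rational map $\mathcal S\dashrightarrow\mathcal M_5$ which is dominant, since every member of $\mathcal F_5$ occurs among these double covers; as $\mathcal S$ is rational, $\mathcal M_5$ is unirational. The one point requiring care is this dominance, i.e. checking that the sextic attached to a general $X\in\mathcal F_5$ is singular exactly at three non-collinear points, so that it lies in $\mathcal S$; this is where one uses that the general member of $\mathcal F_5$ has precisely the $(-2)$-configuration of Figure \ref{fig3}.
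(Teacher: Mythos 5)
Your proof is correct and follows essentially the same route as the paper: normalise the three nodes to the fundamental points via $\mathrm{PGL}_3(\cc)$, observe that the sextics singular there form an $18$-dimensional projective space, and quotient by the $2$-dimensional stabiliser to get a $16$-dimensional unirational space dominating $\mathcal M_5$. The extra details you supply (non-collinearity of the nodes via the $(-2)$-configuration, the explicit count of nodal conditions, and the dominance check) are all consistent with what the paper leaves implicit.
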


\begin{proof}
By Proposition \ref{gen v5} $X$ is the minimal resolution of a 
double cover of $\pp^2$ branched along a plane sextic $B$ with three nodes,
thus the moduli space of such K3 surfaces is birational to the moduli space
of such plane sextics. 
Up to a projectivity we can assume that the three nodes of $B$ are 
the fundamental points of $\pp^2$. The linear system of plane sextics 
with those nodes has dimension $18$. The quotient of such space by the action of the projectivities 
leaving invariant the set of fundamental points 
gives a $16$-dimensional unirational moduli space.
\end{proof}

\subsection*{The family $\mathcal F_6$}

Let $X$ be a K3 surface with $\NS(X)\cong V_6=U(3)\oplus 2A_1$.
By Theorem \ref{main-eff}, $X$ contains eight $(-2)$-curves 
whose intersection matrix is given in Table \ref{Intmat}.
The Hilbert basis of the nef cone of $X$ contains $19$ classes, 
four of them are classes of elliptic fibrations: $\BNef[5]$, $\BNef[8]$, $\BNef[18]$ and $\BNef[19]$. 
Each elliptic fibration has two fibers of type $\tilde A_1$ and has no sections. 

\begin{proposition}\label{gen v6}
Let $X$ be a K3 surface with $\NS(X)\cong V_6=U(3)\oplus 2A_1$. Then
\begin{enumerate}
\item  there is a double cover $\pi:X\to \pp^2$ branched along 
 a smooth plane sextic with two $3$-tangent lines  $L_1, L_2$  and two $6$-tangent conics $C_1, C_2$;
 \item  $X$ can be defined by an equation of the following form in $\pp(1,1,1,3)$:
 \[
x_3^2= F_1(x_0,x_1,x_2)F_2(x_0,x_1,x_2)G_1(x_0,x_1,x_2)G_2(x_0,x_1,x_2)+F(x_0,x_1,x_2)^2,
 \]
 where  $F\in\cc[x_0,x_1, x_2]$ is homogeneous of degree three, $G_1,G_2\in\cc[x_0,x_1, x_2]$ are homogeneous of degree two 
 and $F_1, F_2\in\cc[x_0,x_1, x_2]$ are homogeneous of degree one;
 \item the surface has eight $(-2)$-curves:  the four curves $R_{ij}$, $i,j=1,2$ such that 
$\pi(R_{i1})=\pi(R_{i2})=L_i$ and the four curves $S_{i1}, S_{i2}$ such that 
$\pi(S_{i1})=\pi(S_{i2})=C_i$ for $i=1,2$;
 \item a minimal generating set of the Cox ring of  $X$ is $s_1,\dots,s_9$, where 
  $s_1, \dots, s_{8}$ define the $(-2)$-curves and  $s_9\in H^0(\pi^*\oo_{\pp^2}(1))$;
\item 
for a very general $X$ as before we have an isomorphism 
\[
 \cc[T_1,\dots, T_9]/I\to R(X),\quad T_i\mapsto s_i,
\]
where the degrees of the generators $T_i$ for $i=1, \dots, 9$ are given by the columns of the following matrix
\[
\left(
\begin{array}{ccccccccc}
\,\,\,\,0& -2 &\,\,\,\, 0 & \,\,\,\,0 & -1 & -2 &\,\,\,\, 0 & -1 & -1\\
\,\,\,\,0& -2 & -1 & \,\,\,\,0 &\,\,\,\, 0 & -2 & -1 &\,\,\,\, 0 & -1\\
\,\,\,\,0& -3 & \,\,\,\,0 &\,\,\,\, 1 & \,\,\,\,0 & -2 & -1 & -1 & -1\\
\,\,\,\,1& -2 & -1 &\,\,\,\, 0 & -1 & -3 &\,\,\,\, 0 &\,\,\,\, 0 & -1
\end{array}
\right)
\]
and the ideal $I$ is generated by the following polynomials:
\begin{eqnarray*}
&T_1T_6-\tilde G_1(T_3T_8,T_5T_7,T_9),\\
&T_2T_4-\tilde G_2(T_3T_8,T_5T_7,T_9),\\
&T_1T_2T_3T_5+T_4T_6T_7T_8-\tilde F(T_3T_8,T_5T_7,T_9),
\end{eqnarray*}
where $\tilde G_i=G\circ \varphi^{-1}$, $i=1,2$, and $\tilde F=F\circ \varphi^{-1}$ 
are obtained from $G_1,G_2$ and $F$ in item ii) respectively composing with 
the coordinate change in $\pp^2$ given by 
\[
\varphi(x_0,x_1,x_2)=(F_1, F_2, s_9).
\]
\end{enumerate}
\end{proposition}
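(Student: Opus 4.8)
The plan is to follow the same pattern as the earlier families, using Theorem \ref{main-eff} to get the combinatorial data and Theorem \ref{bl}, Theorem \ref{hyp} and Corollary \ref{section} to produce the morphisms. First I would pick out from Table \ref{Table1} the classes $f_1,\dots,f_8$ of the $(-2)$-curves and an explicit nef class $h$ of square $2$ — the natural candidate is $h=\BNef[1]$ (or whichever element of the Hilbert basis of the nef cone has self-intersection $2$). Checking $h^2=2$, that $h$ meets each class of elliptic fibration with multiplicity $\geq 2$, and that no elliptic fibration has sections (stated in the preamble), Corollary \ref{section} gives that $|h|$ is base point free, so by Theorem \ref{hyp}c) $\varphi_h=\pi\colon X\to\pp^2$ is a double cover branched along a smooth plane sextic $B$. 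To read off the tangent lines and conics I would decompose $2h$ in $\Eff(X)$ as sums $f_i+f_j$ and $f_i+f_j+f_k+f_l$ of $(-2)$-classes, exactly as in Propositions \ref{gen v1}, \ref{gen v2}: a decomposition $2h=f_{i}+f_{j}$ with $h\cdot f_i=h\cdot f_j=1$ (so the $f$'s map to lines) forces the image to be a line $3$-tangent to $B$ (two pullback components each meeting the ramification thrice), while $2h=f_{i1}+f_{i2}$ with $h\cdot f_{ik}=2$ forces a conic $6$-tangent to $B$; the intersection graph in Table \ref{Intmat} should give precisely two of each, producing $L_1,L_2,C_1,C_2$ and items (i) and (iii).

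For item (ii) I would use the standard description of a double cover of $\pp^2$ of degree $2$: $X$ embeds in the weighted projective space $\pp(1,1,1,3)$ as $x_3^2=f_6(x_0,x_1,x_2)$ with $f_6$ the sextic form defining $B$. The point is that $B$ being $3$-tangent to $L_i=\{F_i=0\}$ along a divisor that lifts to a split pullback means $f_6$ restricted to $L_i$ is a perfect square; likewise $f_6$ restricted to $C_i=\{G_i=0\}$ is a perfect square. Working modulo the ideal $(F_1F_2G_1G_2)$ one concludes $f_6\equiv F^2$ for some cubic $F$, so $f_6=F_1F_2G_1G_2\cdot(\text{something})+F^2$, and a degree count (both sides degree $6$, $F_1F_2G_1G_2$ degree $6$) forces the "something" to be a constant, which can be normalized to $1$ — this is the equation in (ii). This is the same trick used implicitly for $\mathcal F_2$ and will need the genericity hypothesis to ensure $F,G_1,G_2,F_1,F_2$ are in general position.

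Items (iv) and (v) are the Cox-ring part. For (iv), by Theorem \ref{main-cox} the generating degrees lie among $f_1,\dots,f_8$ and sums of at most three elements of $\BNef(X)$ (the exceptional degrees $2(F+F')$ only occur for $V_5,V_9$, so they are irrelevant here); one then runs the Magma routines of section \ref{cox} — Theorem \ref{teokoszul}, Proposition \ref{va}, Proposition \ref{ottem} to discard degrees and Lemma \ref{l2}, Proposition \ref{minimal} to keep the $f_i$ and $h=\deg s_9$ — to conclude the nine sections $s_1,\dots,s_9$ generate. For (v), having (ii) and the generators, I would change coordinates in $\pp^2$ by $\varphi=(F_1,F_2,s_9)$ so that the three products $T_3T_8\leftrightarrow F_1$, $T_5T_7\leftrightarrow F_2$, $T_9\leftrightarrow$ the third coordinate, and the degree-$4$ monomials $T_1T_2T_3T_5$, $T_4T_6T_7T_8$ are the pullbacks of the two $3$-tangent lines and the product structure; the relations $T_1T_6=\tilde G_1(\dots)$, $T_2T_4=\tilde G_2(\dots)$ encode that each conic pulls back to a split divisor, and the last relation is the degree-$3$ incarnation of the equation $x_3^2=\cdots$ after substituting. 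Then I would check that $\cc[T_1,\dots,T_9]/I$ has the right Hilbert series (or is a complete intersection of the expected codimension $3$ in the $9$-dimensional torus quotient) and that the surjection onto $R(X)$ from (iv) is an isomorphism by comparing dimensions in each graded piece, or by a direct Koszul/regularity argument as in \cite{A.C.L}. The main obstacle is this last point — proving the presentation is exactly $I$ (not just that $I$ is contained in the relation ideal): one must rule out extra relations, which is where the "very general" hypothesis and a careful dimension count (matching $\dim R(X)_w$ with $\dim (\cc[T]/I)_w$ using the explicit equation and Riemann--Roch on $X$) do the work.
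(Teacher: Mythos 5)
Your overall route coincides with the paper's: the class used there is $h=\BNef[15]$ (ample, of square $2$, base point free by Corollary \ref{section}), the decompositions $h=f_3+f_8=f_5+f_7$ give the two tritangent lines and $2h=f_1+f_6=f_2+f_4$ the two $6$-tangent conics, and items (iv)--(v) rest on Theorem \ref{main-cox} together with the primality argument of \cite[Theorem 3.5]{A.C.L}. (A small numerical slip: a line corresponds to a decomposition $h=f_i+f_j$ with $h\cdot f_i=h\cdot f_j=1$, not $2h=f_i+f_j$; the latter with $h\cdot f_i=1$ is impossible since $2h^2=4$.)

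The one genuine gap is in your derivation of item (ii). Knowing that the sextic form $f_6$ restricts to a square on each of $L_1,L_2,C_1,C_2$ separately does \emph{not} give $f_6\equiv F^2$ modulo $(F_1F_2G_1G_2)$: the four local square roots must be chosen with compatible signs at the points where the components of $D=L_1\cup L_2\cup C_1\cup C_2$ meet, i.e.\ the double cover must be trivial over all of $D$ and not merely over each component. This is precisely what separates $V_{12}$ from $V_{13}$ (three tritangent lines with, respectively, trivial and non-trivial cover over their union, only the first of which admits an equation $F_1F_2F_3G_1+G_2^2$), so the implication you assert is false without this extra input. The paper supplies it by reading off from the intersection matrix in Table \ref{Intmat} that the eight $(-2)$-curves split into two groups each mapping isomorphically to $D$, then invoking \cite[Proposition 1.7, Ch.~3]{Vermeulen} to produce a cubic $F$ with $B\cdot D=2F\cdot D$, and concluding by Bezout: a member of the pencil spanned by $F_1F_2G_1G_2$ and $F^2$ meeting the irreducible $B$ in $37>36=\deg B\cdot\deg D$ points must equal $B$. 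Note also that your appeal to genericity is misplaced here --- item (ii) holds for every member of the family, and the ``very general'' hypothesis enters only in (v) to guarantee that $I$ is prime. For (v) your graded-dimension comparison would work, but the paper's shortcut --- $I$ prime, hence $\cc[T_1,\dots,T_9]/I$ an integral domain of Krull dimension $6=\dim X+\rk\Cl(X)$ surjecting onto the domain $R(X)$, hence isomorphic to it --- avoids any Hilbert-series computation.
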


\begin{proof}
Let $f_1,\dots, f_8$ be the classes of the $(-2)$-curves (see Table \ref{Table1}) 
and $h=\BNef[15]$. Then  $h^2=2$, 
 $h\cdot f_i=2$ for $i=1,2,4,6$ and $h\cdot f_i=1$ for $i=3,5,7,8$.
 Thus $h$ is ample. By Corollary \ref{section} the associated linear system is base point free 
 and thus defines a double cover $\pi:X\to \pp^2$ branched along a smooth plane sextic $B$.
Since $h=f_3+f_{8}=f_5+f_{7}$ and $2h=f_1+f_6=f_2+f_4$ 
the image by $\pi$ of the eight $(-2)$-curves of $X$ are two $3$-tangent lines  $L_1,L_2$  
and two $6$-tangent conics $C_1,C_2\subseteq \pp^2$. This proves items i) and iii).

To prove ii), observe that, looking at the intersection matrix of the $(-2)$-curves of $X$ one can see that 
the restriction of the double cover $\pi$ over $D=L_1\cup L_2\cup C_1\cup C_2$ is trivial.
This implies that there exists a plane cubic $C$ such that $B\cdot D=2C\cdot D$ (see for example \cite[Proposition 1.7, Ch.3]{Vermeulen}).
Let $F_1, F_2, G_1, G_2, F$ be defining polynomials for $L_1, L_2, G_1, G_2$ and $F$  
respectively. Consider the pencil 
of plane sextics generated by $F_1F_2G_1G_2$ and $F^2$, which contains $B\cap D$ in its base locus. 
Let $V$ be an element of the pencil intersecting $B$ in one more point. By Bezout's Theorem, since $V$ intersects $B$ in at least $37$ points counting multiplicities and $B$ is irreducible, we have that $V=B$.

By Theorem \ref{main-cox} the Cox ring $R(X)$ is generated in the following degrees: 
\[
f_1,\dots, f_{8}, h. 
\]
Clearly any minimal generating set of $R(X)$ must contain the sections $s_1,\dots, s_{8}$ defining 
the $(-2)$-curves of $X$ and a section $s_9$ such that $s_3s_8, s_5s_7, s_9$ is a basis of $H^0(h)$.
This proves item iv).

The first two relations in item v) are obvious, due to the fact that $s_1s_6$ and $s_2s_4$ define the preimages of the conics $C_1=\{G_1=0\}$ and $C_2=\{G_2=0\}$. 
The last relation, follows from the fact that 
\[
(x_3+F )(x_3-F)=F_1F_2G_1G_2,
\] 
thus up to renumbering we can assume $x_3+F=s_1s_2s_3s_5$, $x_3-F=s_4s_6s_7s_8$, so that $2F=s_1s_2s_3s_5 - s_4s_6s_7s_8$. Up to rescaling the generators $s_i$ we obtain the last relation.
It can be proved with the same type of argument used in the proof of \cite[Theorem 3.5]{A.C.L} that the ideal $I$ is prime for general $F_1, F_2, G_1, G_2, F$. 
Since the ring $\cc[T_1,\dots, T_9]/I$ is an integral domain, it has Krull dimension $6$ and it surjects onto $R(X)$, 
then $\cc[T_1,\dots,T_9]/I\cong R(X)$.
\end{proof}

 \begin{corollary}\label{uni6}
The moduli space of K3 surfaces with $\NS(X)\simeq U(3)\oplus 2{A}_{1}$
is unirational.
\end{corollary}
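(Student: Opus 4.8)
The plan is to follow the same template as the earlier unirationality statements (compare the proofs of Corollaries \ref{uni1}, \ref{uni4} and \ref{uni5}) and to present $\mathcal M_6$ as the image of an explicit rational parameter space. The input is item ii) of Proposition \ref{gen v6}: every K3 surface $X$ with $\NS(X)\cong U(3)\oplus 2A_1$ is the double cover of $\pp^2$ cut out in $\pp(1,1,1,3)$ by an equation
\[
x_3^2 = F_1F_2G_1G_2 + F^2,
\]
where $F_1,F_2$ are linear, $G_1,G_2$ quadratic and $F$ cubic in $x_0,x_1,x_2$; conversely, for a general such choice the branch sextic $F_1F_2G_1G_2+F^2$ is smooth and carries the tangent configuration of item i), so the resulting surface lies in $\mathcal F_6$. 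Accordingly the first step is to set up the parameter space $\mathcal P$ of $5$-tuples of forms $(F_1,F_2,G_1,G_2,F)$ of the prescribed degrees; it is an open subset of $\mathbb{A}^{3}\times\mathbb{A}^{3}\times\mathbb{A}^{6}\times\mathbb{A}^{6}\times\mathbb{A}^{10}$, hence rational, and by Proposition \ref{gen v6} the assignment $(F_1,F_2,G_1,G_2,F)\mapsto X$ gives a dominant rational map $\mathcal P\dashrightarrow\mathcal M_6$.

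Once this is set up the conclusion is immediate: a variety dominated by a rational variety is unirational, so $\mathcal M_6$ is unirational. To phrase it in the quantitative form used for the other families, I would then identify $\mathcal M_6$, up to birational equivalence, with the quotient of $\mathcal P$ by the continuous group generated by ${\rm PGL}_3(\cc)$ acting on $\pp^2$ and by the torus of rescalings $(F_1,F_2,G_1,G_2,F)\mapsto(a_1F_1,a_2F_2,a_3G_1,a_4G_2,a_5F)$ with $a_1a_2a_3a_4=a_5^2$ (which fixes the branch sextic up to a scalar), together with the finite group exchanging $F_1\leftrightarrow F_2$ and $G_1\leftrightarrow G_2$ and replacing $F$ by $-F$. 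Since $\dim\mathcal P=28$ and this continuous symmetry group has dimension $8+4=12$ and generically finite stabilizers, the quotient has dimension $28-12=16=20-\rho(X)$, which serves as a check that no parameters have been dropped.

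The step that I expect to need the most care --- insofar as there is a genuine obstacle --- is the verification that $\mathcal P\dashrightarrow\mathcal M_6$ is dominant and generically finite, i.e. that the very general tuple gives a K3 surface whose N\'eron-Severi lattice is exactly $V_6$ and that $(F_1,F_2,G_1,G_2,F)$ is recovered from $X$ only up to the symmetries above. Dominance is already furnished by item ii) of Proposition \ref{gen v6}, since every surface in $\mathcal F_6$, and in particular the very general one, arises from such an equation; the generic finiteness amounts to reconstructing the branch sextic from the polarization $\BNef[15]$ of $X$, and then the lines $L_1,L_2$, the conics $C_1,C_2$ and the cubic $F$ from the splitting of $\pi^{-1}(L_1\cup L_2\cup C_1\cup C_2)$ on $X$, a short check using the intersection matrix in Table \ref{Intmat}. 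For the bare unirationality claim even this last point is unnecessary: dominance of $\mathcal P\dashrightarrow\mathcal M_6$, together with the rationality of $\mathcal P$, already gives the result.
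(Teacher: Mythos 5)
Your proposal is correct and follows essentially the same route as the paper: both use item ii) of Proposition \ref{gen v6} to present every surface in $\mathcal F_6$ as a double cover branched along a sextic $F_1F_2G_1G_2+F^2=0$, so that the rational parameter space of tuples $(F_1,F_2,G_1,G_2,F)$ dominates $\mathcal M_6$. Your additional identification of the full symmetry group (the rescaling torus with $a_1a_2a_3a_4=a_5^2$ on top of ${\rm PGL}_3(\cc)$) and the resulting count $28-12=16$ is a useful consistency check that the paper only carries out implicitly via the bound $20-\rho=16$, but it is not needed for the unirationality claim itself.
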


\begin{proof}
It follows from item ii) in Proposition \ref{gen v6} that  $X$ is a double cover of $\pp^2$ branched 
along a plane sextic which can be defined 
by an equation of the form $F_1F_2G_1G_2+F^2=0$,
 where $F_1,F_2\in\cc[x_0,\dots, x_2]$ are homogeneous polynomials of degree one, 
$G_1,G_2\in\cc[x_0,\dots, x_2]$ of degree two and $F\in \cc[x_0,\dots,x_2]$ of degree three.
 
 Conversely, a double cover $Y$ of $\pp^2$ branched along a 
 smooth plane sextic $B$ with an equation of that form 
 has the property that the curves defined by $F_i=0$ for $i=1,2$ or $G_j=0$ 
 for $j=1,2$ are  everywhere tangent to $B$. Moreover, the double cover 
 is trivial over their union. This implies that the pull-back of each such curve 
 is the union of two smooth rational curves and it can be easily proved that 
 the classes of such curves generate a sublattice of $\NS(Y)$ isometric to $V_6$.
Thus the moduli space of such K3 surfaces $Y$ has dimension at most $20-4=16$.

This shows that an open subset of the moduli space of K3 surfaces with $\NS(X)\simeq U(3)\oplus 2A_{1}$
is isomorphic to the quotient by ${\rm PGL}(3,\cc)$ of an open subset of the image of the map:
\[
\Phi:H^{0}(\pp^{2},\Osh_{\pp^2}(1))^{\oplus2}\oplus H^{0}(\pp^{2},\Osh_{\pp^2}(2))^{\oplus2}\oplus  H^{0}(\pp^{2},\Osh_{\pp^2}(3))
\to H^{0}(\pp^{2},\Osh_{\pp^2}(6))
\]
defined by 
\[
(F_1,F_2, G_1,G_2,F)\to  F_1F_2G_1G_2+F^2.
\]
In particular, it is unirational.
\end{proof}

\begin{example}
Let us take 
\[
\begin{array}{c}
F_{1}=x_1+x_2,\ F_{2}=2(x_0+x_1+x_2)\\[1.5pt]
G_{1}=x_0^{2}+x_0x_2+x_1x_2+x_2^{2},\ G_{2}=x_0^{2}+x_0x_1+2x_1^{2}+x_2^{2}\\[1.5pt]
F=2x_0^{2}x_1+2x_0x_1^{2}+2x_1^{3}+2x_0x_1x_2+2x_1^{2}x_2+x_2^{3}.
\end{array}
\]
One can check that the plane sextic defined by $F_{1}F_{2}G_{1}G_{2}-F^{2}=0$
is smooth. Let $Y$ be the double cover of $\pp^2$ branched along it. The 
reduction $Y_{3}$ of $Y$ modulo $3$ is smooth. One computes
that the Weil polynomial of the Frobenius action on the second \'etale
cohomology group of $Y_{3}$ has the form 
\[
P_{2}=(T-3)(T+3)^{3}Q_{2},
\]
where $Q_{2}$ has no roots $\alpha$ such that $\frac{\alpha}{3}$ is a root
of unity. Therefore the geometric Picard number of $Y$ is $4$ and
$Y_{/\cc}$ is a K3 surface with $\NS(Y_{/\cc})\simeq U(3)\oplus 2A_{1}$. 
\end{example}

\subsection*{The family $\mathcal F_7$}

Let $X$ be a K3 surface with $\NS(X)\cong V_7=U(4)\oplus 2A_1$.
By Theorem \ref{main-eff} $X$ contains eight $(-2)$-curves 
and  the Hilbert basis of the nef cone of $X$ contains $15$ classes, 
with six classes of elliptic fibrations $\BNef[4]$, $\BNef[7]$, $\BNef[9]$, $\BNef[13]$, $\BNef[14]$ and $\BNef[15]$. 
Each elliptic fibration is without sections and has two fibers of type $\tilde A_1$.

\begin{proposition}\label{gen v7}
Let $X$ be a K3 surface with $\NS(X)\cong V_7=U(4)\oplus 2A_1$. Then
\begin{enumerate}
\item $X$ is isomorphic to a smooth quartic 
 surface in $\pp^3$ having four hyperplane sections which are the union of two conics;
 \item  $X$ can be defined by an equation of the form
 \[
 G_1(x_0,\dots,x_3)G_2(x_0, \dots,x_3)+F_1(x_0, \dots,x_3)F_2(x_0, \dots,x_3)F_3(x_0, \dots,x_3)F_4(x_0, \dots,x_3)=0,
 \]
 where $G_1,G_2\in\cc[x_0,\dots, x_3]$ are homogeneous of degree two 
 and $F_i\in\cc[x_0,\dots, x_3]$ are homogeneous of degree one 
 for $i=1,2,3,4$;
\item the surface has eight $(-2)$-curves: the eight conics;
 \item a minimal generating set of the Cox ring of  $X$ is $s_1,\dots, s_8$, where  $s_1,\dots, s_{8}$ define 
 the $(-2)$-curves (we assume that $s_6s_7$, $s_3s_5$, $s_1s_8$ and $s_2s_4$ define the four reducible hyperplane sections);
\item for a very general $X$ as before we have an isomorphism 
\[
 \cc[T_1,\dots, T_8]/I\to R(X),\quad T_i\mapsto s_i,
\]
where the degrees of the generators $T_i$ for $i=1, \dots, 8$ are given by the columns of the following matrix
\[
\left(
\begin{array}{ccccccccc}
\,\,\,\,0& \,\,\,\,0 & -1 & -1& \,\,\,\,0 & -1 &  \,\,\,\,0& -1 \\
-1& \,\,\,\,0& -1 & -1&\,\,\,\, 0 & \,\,\,\,0 & -1 & \,\,\,\,0 \\
\,\,\,\,0& \,\,\,\,0 & -2 & -1&\,\,\,\,1 &\,\,\,\, 0 & -1 & -1 \\
\,\,\,\,1& -1 & \,\,\,\,1 &\,\,\,\, 2&\,\,\,\, 0& \,\,\,\,1& \,\,\,\,0 &\,\,\,\, 0 
\end{array}
\right)
\]
and the ideal $I$ is generated by the following polynomials:
\begin{eqnarray*}
&T_1T_2T_3T_6-\tilde G_1(T_6T_7,T_3T_5,T_1T_8,T_2T_4),\\
&T_4T_5T_7T_8-\tilde G_2(T_6T_7,T_3T_5,T_1T_8,T_2T_4),
\end{eqnarray*}
where $\tilde G_i=G\circ \varphi^{-1}$, $i=1,2$  
are obtained from $G_1,G_2$ in item ii) respectively composing with 
the coordinate change in $\pp^3$ given by 
\[
\varphi(x_0,x_1,x_2,x_3)=(F_1, F_2, F_3, F_4).
\]
\end{enumerate}
\end{proposition}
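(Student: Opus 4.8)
The plan is to run the strategy already used for $\mathcal F_1$--$\mathcal F_6$. Let $f_1,\dots,f_8$ be the classes of the eight $(-2)$-curves $E_1,\dots,E_8$, and let $h\in\BNef(X)$ be the class with $h^2=4$ listed in Table \ref{Table1}; from that table one reads off
\[
h=f_6+f_7=f_3+f_5=f_1+f_8=f_2+f_4,\qquad 2h=f_1+f_2+f_3+f_6=f_4+f_5+f_7+f_8 .
\]
Since by the description of $\mathcal F_7$ above no elliptic fibration of $X$ has a section, Corollary \ref{section} shows $|h|$ is base point free; moreover $h^2=4\ne2$, $h$ is primitive (so it is not of the form $2D'$), and no elliptic fibration class $F$ has $F\cdot h=2$, so by Theorem \ref{hyp} the class $h$ is non-hyperelliptic. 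As $h\cdot f_i=2>0$ for every $i$, the morphism $\varphi_h$ contracts nothing and is therefore an isomorphism onto a smooth quartic surface $\overline X\subseteq\pp^3$. The four equalities $h=f_j+f_k$ exhibit $E_j+E_k$ as four reducible hyperplane sections, each the union of two smooth conics, and the four planes $\Pi_1,\dots,\Pi_4$ are distinct because the corresponding pairs of conics are distinct. This gives (i) and (iii).

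For (ii) I would note that $h^0(X,2h)=2+\tfrac12(2h)^2=10=h^0(\pp^3,\Osh_{\pp^3}(2))$ and that no quadric contains the quartic $\overline X$, so restriction is an isomorphism $H^0(\pp^3,\Osh_{\pp^3}(2))\xrightarrow{\;\sim\;}H^0(X,2h)$. Applying it to the effective divisors $f_1+f_2+f_3+f_6$ and $f_4+f_5+f_7+f_8$ produces quadrics $G_1,G_2$ whose sections on $\overline X$ are exactly $E_1+E_2+E_3+E_6$ and $E_4+E_5+E_7+E_8$. Writing $F_i$ for an equation of $\Pi_i$, the quartics $G_1G_2$ and $F_1F_2F_3F_4$ cut on $\overline X$ the same divisor $\sum_{i=1}^8 E_i$, hence restrict to proportional sections of $\Osh_{\overline X}(4)$; their difference is a degree-four form vanishing on $\overline X$, so it is a scalar multiple of the defining quartic $\Phi$, and it is nonzero since, by unique factorization in $\cc[x_0,\dots,x_3]$, a product of two quadrics equal to a product of four planes would force each of $G_1,G_2$ to split as a product of two of the $F_i$, contradicting $G_1\cap\overline X=E_1+E_2+E_3+E_6$. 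Rescaling $G_1$ and one of the $F_i$ then yields $\Phi=G_1G_2+F_1F_2F_3F_4$.

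For (iv) and (v) I would follow \cite[Theorem 3.5]{A.C.L}. By Theorem \ref{main-cox} and Table \ref{TableGen rank4}, a minimal homogeneous generating set of $R(X)$ has degrees among $f_1,\dots,f_8$ and $h$; the $f_i$ lie in $\BEff(X)$, so each forces a generator, namely the defining section $s_i$ of $E_i$. No generator is needed in degree $h$: the four products $s_6s_7,\,s_3s_5,\,s_1s_8,\,s_2s_4$ lie in the $4$-dimensional space $H^0(X,h)$ and are linearly independent, because under $H^0(X,h)\cong H^0(\pp^3,\Osh_{\pp^3}(1))$ they correspond to $F_1,F_2,F_3,F_4$, which span (the $\Pi_i$ being in general position, a fact one checks from the intersection matrix of Table \ref{Intmat} or deduces a posteriori from Theorem \ref{main-cox}). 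This proves (iv). For (v), after suitably rescaling the $s_i$, the identities $\tilde G_i(F_1,F_2,F_3,F_4)=G_i$ together with $H^0(X,2h)\cong H^0(\pp^3,\Osh_{\pp^3}(2))$ and the correspondences $s_1s_2s_3s_6\mapsto G_1$, $s_4s_5s_7s_8\mapsto G_2$ give the relations
\[
s_1s_2s_3s_6=\tilde G_1(s_6s_7,s_3s_5,s_1s_8,s_2s_4),\qquad s_4s_5s_7s_8=\tilde G_2(s_6s_7,s_3s_5,s_1s_8,s_2s_4)
\]
in $R(X)$, so $T_i\mapsto s_i$ induces a surjection $\cc[T_1,\dots,T_8]/I\twoheadrightarrow R(X)$. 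As in \cite[Theorem 3.5]{A.C.L}, for very general $G_1,G_2,F_1,\dots,F_4$ the two generators of $I$ form a regular sequence and $I$ is prime, so $\cc[T_1,\dots,T_8]/I$ is an integral domain of Krull dimension $8-2=6=\dim R(X)$; a surjection between integral domains of equal Krull dimension is an isomorphism, which is (v).

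The hard part will be the primality of $I$ for very general parameters: the regular-sequence argument and the dimension count are routine, and the two relations are geometrically transparent, but showing $I$ is prime requires analysing the hypersurface $\{r_1=0\}$ and verifying that $r_2$ defines a prime divisor on it for general $G_1,G_2$ and $F_1,\dots,F_4$, which is precisely the kind of argument carried out in \cite[Theorem 3.5]{A.C.L}. A minor technical point is the linear independence of $F_1,\dots,F_4$ used in (iv), together with the correct normalization of the generators $s_i$ making the two displayed relations hold on the nose rather than merely up to scalars.
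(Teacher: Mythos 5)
Your proposal is correct and follows essentially the same route as the paper: the same class $h=\BNef[11]$ giving the smooth quartic model, the same observation that $2h=f_1+f_2+f_3+f_6=f_4+f_5+f_7+f_8$ produces the two quadrics, the same Bezout-type argument for the equation (you phrase it via proportional sections and unique factorization, the paper via the pencil spanned by $G_1G_2$ and $F_1F_2F_3F_4$, but these are the same idea), and the same use of the linear independence of $F_1,\dots,F_4$ to discard a generator in degree $h$ and of the argument of \cite[Theorem 3.5]{A.C.L} for primality and the dimension count. The only substantive caveat, which applies equally to the paper's own proof, is that the independence of the four planes is only asserted for a generic member of the family.
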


\begin{proof}
Let $f_1,\dots,f_8$ be the classes of the $(-2)$-curves  (see Table \ref{Table1}) and let $h=\BNef[11]$.
Then $h^2=4$ and $h\cdot f_i=2$ for all $i$. Thus $h$ is ample. 
Moreover, $h$ is not hyperelliptic by Proposition \ref{hyp}.
By Corollary \ref{section} the associated linear system  is base point free,
thus it defines an embedding of $X$ in $\pp^3$ as a smooth quartic surface.
Since $h=f_1+f_{8}=f_2+f_{4}=f_3+f_5=f_6+f_7$ and $h\cdot f_i=2$ for all $i$, then 
$X$ has four hyperplane sections which decompose into the union of two conics.
We will denote by $C_i$ the conic whose class is $f_i$.
Observe that $f_1+f_2+f_3+f_6=f_4+f_5+f_7+f_8=2h$. This means 
that the four conics $C_1, C_2, C_3, C_6$ are contained in a quadric $Q_1$. 
For the same reason the conics $C_4, C_5, C_7, C_8$ are contained in a quadric $Q_2$. 

Let $F_1,\dots, F_4$ be defining polynomials for the four planes containing the 
conics $C_1,\dots, C_8$, and $G_1, G_2$ be defining polynomials for the 
quadrics $Q_1,Q_2$ respectively. Consider the pencil of quartic surfaces 
in $\pp^3$ generated by $G_1G_2$ and $F_1F_2F_3F_4$. 
Any member of the pencil intersects $X$ along the degree $16$ curve 
$C_1\cup\cdots\cup C_8$. 
Since $X$ is irreducible, it follows from Bezout's theorem that $X$ belongs to the pencil.

 By Theorem \ref{main-cox} the Cox ring $R(X)$ is generated in the following degrees: 
\[
f_1,\dots, f_{8},\ h. 
\]
Clearly any minimal generating set of $R(X)$ must contain the sections $s_1,\dots, s_{8}$ defining 
the $(-2)$-curves of $X$.  
Since generically the four polynomials $F_1,\dots,F_4$ 
can be taken to be independent,  then they generate $H^0(\oo_{\pp^3}(1))$.
This implies that  $s_1s_8, s_2s_4, s_3s_5, s_6s_7$ generate $H^0(h)$,
so that a generator in degree $H^0(h)$ is not necessary. This proves item iv).
The two relations in item v) are due to the fact that $s_1s_2s_3s_6$ and $s_4s_5s_7s_8$ define the intersections 
$X\cap Q_i$ for $i=1,2$ respectively.

It can be proved with the same type of argument used in the proof of 
\cite[Theorem 3.5]{A.C.L} that the ideal $I$ is prime for general $G_1, G_2$. 
Since $\cc[T_1,\dots,T_8]/I$ is an integral domain of dimension $\dim R(X)= \dim (X)+ \rk \Cl(X)=6$ 
and surjects onto $R(X)$, then it is isomorphic to it.
\end{proof}

\begin{corollary}\label{uni7}
The moduli space of K3 surfaces $X$ with $\NS(X)\simeq U(4)\oplus 2A_{1}$
is unirational. 
\end{corollary}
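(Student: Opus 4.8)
The plan is to follow the pattern of Corollary \ref{uni6}: extract a rational parameterization of $\mathcal F_7$ from the projective model of Proposition \ref{gen v7} and reverse the construction. By items (i) and (ii) of that proposition, a very general $X$ with $\NS(X)\cong V_7$ is isomorphic to a smooth quartic surface in $\pp^3$ defined by an equation
\[
G_1G_2+F_1F_2F_3F_4=0,\qquad G_1,G_2\in H^0(\pp^3,\oo(2)),\quad F_1,\dots,F_4\in H^0(\pp^3,\oo(1)).
\]
The first step is to run this backwards: starting from a general tuple $(G_1,G_2,F_1,\dots,F_4)$, I would check that the quartic $X$ cut out by the displayed equation is a smooth K3 surface and that it contains eight conics, namely the pairs cut on $X$ by the four planes $\{F_i=0\}$ together with the four conics residual to these in the quadrics $\{G_j=0\}$. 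This is precisely the configuration analyzed in the proof of Proposition \ref{gen v7}: each of the four planes and each of the two quadrics meets $X$ in a union of two conics, and the intersection numbers among the resulting eight curves are the ones recorded in Table \ref{Intmat}, so their classes span a sublattice of $\NS(X)$ isometric to $V_7$. In particular $\rho(X)\ge 4$ for every such $X$, so the family of these quartics maps to a moduli space of K3 surfaces having $V_7$ among the sublattices of $\NS(X)$, a space of dimension $20-4=16$.

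Next comes the bookkeeping. The parameter space
\[
\mathcal P:=H^0(\pp^3,\oo(2))^{\oplus 2}\oplus H^0(\pp^3,\oo(1))^{\oplus 4}
\]
is an affine space, hence rational, and I would consider the rational map $\Phi\colon\mathcal P\dashrightarrow|\oo_{\pp^3}(4)|$, $(G_1,G_2,F_1,\dots,F_4)\mapsto G_1G_2+F_1F_2F_3F_4$. By Step 1 there is a nonempty open $\mathcal P^\circ\subseteq\mathcal P$ over which $\Phi$ yields smooth quartic K3 surfaces with the eight conics above, so we obtain a morphism from $\mathcal P^\circ$ to the sixteen-dimensional moduli space $\mathcal M_7$ whose image contains a very general member of $\mathcal F_7$ by Proposition \ref{gen v7}(i)--(ii); since $\mathcal M_7$ is irreducible this morphism is dominant. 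Passing to the quotient by the action of ${\rm PGL}(4,\cc)$, which accounts for the freedom in the choice of coordinates on $\pp^3$, identifies an open subset of $\mathcal M_7$ with the quotient of an open subset of the image of $\Phi$; this exhibits $\mathcal M_7$ as a quotient of a unirational variety, hence unirational.

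The main obstacle is the verification in Step 1 that a general tuple produces a smooth quartic whose Picard lattice is \emph{exactly} $V_7$. Smoothness is obtained by the pencil/B\'ezout argument already carried out in the proof of Proposition \ref{gen v7}: one exhibits a smooth member of the pencil spanned by $G_1G_2$ and $F_1F_2F_3F_4$. That the eight conic classes generate a copy of $V_7$ is the linear-algebra content of Table \ref{Intmat}. Finally, that the generic such surface has $\rho(X)=4$, with no further classes, follows from the dimension count in Step 1 together with Proposition \ref{gen v7}; if one wants this made fully explicit, it can be pinned down by the reduction-modulo-$p$ technique used in the Example following Corollary \ref{uni6}, computing the characteristic polynomial of Frobenius on the second \'etale cohomology of an explicit mod-$p$ reduction and checking that exactly four of its reciprocal roots are roots of unity times $p$. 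Once Step 1 is in place, the remainder of the argument is formal.
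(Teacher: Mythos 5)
Your proposal is correct and follows essentially the same route as the paper: reverse the projective model of Proposition \ref{gen v7}(ii), observe that a general quartic $G_1G_2+F_1F_2F_3F_4=0$ carries eight conics whose classes span a sublattice isometric to $V_7$ (the paper makes this explicit via the classes $C_{11}+C_{21}$, $C_{12}+C_{32}$, $C_{21}$, $C_{32}$), and conclude that $\mathcal M_7$ is dominated by the rational parameter space $H^0(\oo_{\pp^3}(1))^{\oplus 4}\oplus H^0(\oo_{\pp^3}(2))^{\oplus 2}$ modulo ${\rm PGL}(4,\cc)$. The only difference is that the paper does not need the reduction-mod-$p$ verification you mention as a fallback; the dimension count $20-4=16$ together with the surjectivity onto $\mathcal F_7$ already suffices.
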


\begin{proof}
It follows from item ii) in Proposition \ref{gen v7} that  $X$ can be defined 
by an equation of the form $G_1G_2+F_1F_2F_3F_4=0$,
 where $G_1,G_2\in\cc[x_0,\dots, x_3]$ are homogeneous polynomials of degree two 
 and $F_i\in\cc[x_0,\dots, x_3]$ are homogeneous polynomials of degree one 
 for $i=1,2,3,4$. 
 
 Conversely, a smooth quartic surface $Y$ with an equation of that form 
 has the property that for any $i=1,2,3,4$ its intersection with the plane
 $F_i=0$ is the union of two conics $C_{i1}, C_{i2}$ defined by $G_1=0$ and $G_2=0$.
In particular, for a general choice of the polynomials $G_1,G_2,F_1,\dots,F_4$ 
we have $C_{i1}\cdots C_{i2}=4$, $C_{ij}\cdot C_{i'j}=2$ for $i\not=i'$ and $j=1,2$ 
and $C_{ij}\cdot C_{i'j'}=0$ for $i\not=i'$ and $j\not=j'$.
An easy computation shows that the classes of $C_{11}+C_{21}, C_{12}+C_{32}, C_{21}, C_{32}$ generate 
a sublattice of the Picard lattice of $Y$ isometric to $V_7$. 
This implies that the moduli space of such quartic surfaces has dimension at most $20-4=16$.

This shows that an open subset of the moduli space of K3 surfaces with $\NS(X)\simeq U(4)\oplus 2A_{1}$
can be obtained taking the quotient by ${\rm PGL}(4,\cc)$ of an open subset of the image of the map:
\[
\Phi:H^{0}(\pp^{3},\Osh_{\pp^3}(1))^{\oplus4}\oplus H^{0}(\pp^{3},\Osh_{\pp^3}(2))^{\oplus2}\to H^{0}(\pp^{3},\Osh_{\pp^3}(4))
\]
defined by 
\[
(F_1,F_2,F_3, F_4, G_1,G_2)\to  F_1F_2F_3F_4+G_1G_2.
\]
In particular, it is unirational.
\end{proof}

\subsection*{The family $\mathcal F_8$}

Let $X$ be a K3 surface with $\NS(X)\cong V_8=U\oplus A_2$.
By Theorem \ref{main-eff} $X$ contains four $(-2)$-curves 
whose intersection matrix is given in Table \ref{Intmat} and whose 
intersection graph is described in Figure \ref{fig4}.
The Hilbert basis of the nef cone of $X$ contains five classes.
The class $\BNef[5]$ defines an elliptic fibration  having a section 
and one fiber of type $\tilde A_2$. 

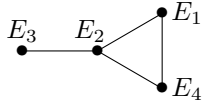
\begin{figure}[H]
\begin{center}
\begin{tikzpicture}[scale=1]

\draw (2,0) -- (3,0);
\draw (3,0) -- (3.86,0.5);
\draw (3,0) -- (3.86,-0.5);
\draw (3.86,0.5) -- (3.86,-0.5);

\draw (2,0) node {$\bullet$};
\draw (3,0) node {$\bullet$};
\draw (3.86,0.5) node {$\bullet$}; 
\draw (3.86,-0.5) node {$\bullet$}; 

\draw (2,0) node [above]{$E_{3}$};
\draw (2.9,0) node [above]{$E_{2}$};
\draw (3.86,0.5) node [right]{$E_{1}$};
\draw (3.86,-0.5) node [right]{$E_{4}$};

\end{tikzpicture}
\end{center} 
\caption{Intersection graph of $(-2)$-curves for $\mathcal F_8$}\label{fig4}
\end{figure}

\begin{proposition}\label{gen v8}
Let $X$ be a K3 surface with $\NS(X)\cong V_8=U\oplus A_2$. Then
\begin{enumerate}
\item there is a degree two morphism $\varphi:X\to \pp^4$, 
whose image is a cone over a rational normal cubic in $\pp^3$,
which factors through a degree two morphism 
$\mu:X\to \ff_3$ branched along 
the union of the smooth rational curve $E$  with $E^2=-3$ and   
a reduced curve $B$ intersecting $E$ at one point $p$ and 
the fiber of $\ff_3\to \pp^1$ through $p$ at one point with multiplicity two;
  
\item the surface has four $(-2)$-curves:  $\mu^{-1}(E)$, $\mu^{-1}(p)$ 
and two smooth rational curves mapping to the fiber of $\ff_3$ through $p$;

\item the Cox ring $R(X)$ is generated in the degrees given  in Table \ref{TableGen rank4}, in particular it 
has at least $8$ generators whose degrees are either classes of $(-2)$-curves 
or the classes $\BNef[i]$, for $i=1,2,3,5$, in the Hilbert basis of the nef cone.
\end{enumerate}
\end{proposition}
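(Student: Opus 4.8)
The strategy parallels the proofs already given for the families $\mathcal F_1$ through $\mathcal F_7$. The starting point is to pick the right nef class. Looking at the Hilbert basis of the nef cone from Theorem \ref{main-eff}, I would take $h=\BNef[i]$ for the unique element with $h^2=4$ having $h\cdot f_j$ appropriately small — in fact the class with $h\cdot E=0$ for the curve $E$ of class $f_1$ (the $A_2$-component spanned by $E_1,E_4$ over the short fiber) and $h\cdot f_j=1$ for the others. Since by hypothesis $X$ has no elliptic fibration with a section except the one with a fiber of type $\tilde A_2$, and one must check $h$ is not of the form $3F+E'$, Corollary \ref{section} (or Theorem \ref{bl}) gives that $|h|$ is base point free. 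The intersection number $h\cdot \BNef[5]=1$ (or $2$) with the elliptic fibration forces $h$ to be hyperelliptic by Theorem \ref{hyp}a); then $\varphi_h:X\to \pp^4$ is a $2:1$ map onto its image, and by \cite[Proposition 5.7]{SD} (case of a non-trigonal hyperelliptic system on a K3 of this degree) the image is a cone over a rational normal cubic in $\pp^3$. This is the geometric input for item (i).

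Next I would factor $\varphi_h$ through the resolution of the vertex of the cone: the cone over a twisted cubic in $\pp^3$ is the image of $\ff_3$ under $|S_0+3F|$ (where $S_0^2=-3$), so $\varphi_h$ factors as $X\xrightarrow{\mu}\ff_3\to\pp^4$, with $\mu$ a double cover. The branch curve of $\mu$ must be a divisor in $\ff_3$ whose class is even in $\Pic(\ff_3)$ and such that the double cover is a K3; writing the branch divisor as $E+B$ with $E$ the $(-3)$-curve and $B\in |3S_0+nF|$, the K3 condition $K_{\ff_3}=-\mu_*(\text{ramification})/1$ pins down $n$, and the local picture at the vertex of the cone forces $B$ to meet $E$ at a single point $p$ and the fiber $F_p$ through $p$ with a tacnode-type contact (multiplicity two at one point). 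For item (ii), the preimages: $\mu^{-1}(E)$ is a $(-2)$-curve (since $E^2=-3$ and $E$ is in the branch locus, $\mu^*E=2\tilde E$ with $\tilde E^2=-2$, but one checks it is $\mu^{-1}(E)$ reduced), $\mu^{-1}(p)$ is a $(-2)$-curve, and $\mu^*F_p$ splits into two $(-2)$-curves because $F_p$ is everywhere tangent to the branch curve; comparing with the intersection graph in Figure \ref{fig4} (a chain of length one attached to an $A_2$, i.e. the $\tilde A_2$ plus a section) confirms these are exactly the four curves $E_1,\dots,E_4$.

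For item (iii), this is the now-routine application of Theorem \ref{main-cox}: the Cox ring is generated in degrees lying in $\BEff(X)$ together with the finite list of nef degrees that are sums of at most three Hilbert-basis elements of the nef cone. One invokes the Magma computation referenced in section \ref{cox}; the surviving degrees are the four $(-2)$-classes and $\BNef[i]$ for $i=1,2,3,5$, and by Proposition \ref{minimal} (or simply because elements of $\BEff(X)$ always contribute a generator, and $\BNef[5]$ has a reducible fiber) each of these is necessary, giving the stated lower bound of $8$ generators. The main obstacle — as flagged in the text for $V_8$ — is that one of the relevant nef classes, namely $\BNef[4]$, fails to be base point free (it is the $3F+E$ class of Theorem \ref{bl}b)), so the standard surjectivity criteria of Theorem \ref{teokoszul} and Proposition \ref{ottem} cannot be applied directly to rule it in or out; the fix, stated in the proof of Theorem \ref{main-cox}, is to simply exclude $\BNef[4]$ from the computation and handle it separately, which is why the statement says "at least $8$ generators" rather than giving the exact minimal set.
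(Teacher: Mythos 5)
Your overall strategy is the right one and matches the paper's in outline (choose a hyperelliptic nef class, apply Saint-Donat to identify the image, factor through $\ff_3$, read off the $(-2)$-curves, then run Theorem \ref{main-cox} with the non-base-point-free class excluded). However, there is a concrete numerical error at the key step of item (i): the class you propose, with $h^2=4$, has $h^0(h)=2+\tfrac12 h^2=4$ and therefore maps $X$ to $\pp^3$, not $\pp^4$; a hyperelliptic class of square $4$ gives a quadric cone in $\pp^3$ (this is exactly the situation of Remark \ref{F2} for the family $\mathcal F_4$), not the cone over a rational normal cubic. The statement forces $h^0(h)=5$, i.e.\ $h^2=6$, and indeed the paper takes $h=\BNef[3]$ with $h^2=6$, $h\cdot f_1=h\cdot f_4=1$, $h\cdot f_2=h\cdot f_3=0$ and $h\cdot\BNef[5]=2$; hyperellipticity then comes from Theorem \ref{hyp}~a), the image is the degree-$3$ cone by \cite[Proposition 5.7, ii)]{SD} applied to the decomposition $h=3\BNef[5]+2f_3+f_2$, and the description of the branch data is \cite[(5.9.2)]{SD}. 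Your intersection pattern ($h$ orthogonal to a single curve) is also off: $h$ must kill both $E_2$ and $E_3$ to contract them to the vertex of the cone and to the point $p$ respectively.

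Two smaller points. First, Corollary \ref{section} is not available for this family, precisely because the elliptic fibration $\BNef[5]$ has a section; base point freeness must come from Theorem \ref{bl}~b) by checking $h\neq 4\BNef[5]+f_3$ (with $a=4$, since $h^2=2a-2=6$), not $a=3$ as your square-$4$ normalization suggests. Second, your side computation for $\mu^{-1}(E)$ is not coherent as written: for a component $E$ of the branch locus one has $\tilde E^2=E^2/2$, which is not an integer when $E^2=-3$; the resolution of the singular point of $E+B$ at $p$ is what makes the preimage a $(-2)$-curve, and this is exactly the content of the Saint-Donat description rather than something one verifies by the naive pullback formula. Item (iii) of your plan is essentially the paper's argument and is fine, though the reason the statement says ``at least $8$'' is simply that the $8$ listed degrees are certified necessary by Proposition \ref{minimal}, each contributing at least one generator, not specifically the separate handling of $\BNef[4]$.
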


\begin{proof}
Let $f_1,\dots,f_4$ be the classes of the $(-2)$-curves $E_1,\dots,E_4$,
where $f_3$ is the class of the section of the elliptic fibration 
defined by $\BNef[5]$,
and $h=\BNef[3]$. Then  $h^2=6$, 
 $h\cdot f_i=1$ for $i=1,4$ and $h\cdot f_i=0$ for $i=2,3$. 
 Since $h\cdot \BNef[5]=2$ then the associated linear system is hyperelliptic by Proposition \ref{hyp}.
 Moreover, it is base point free by Theorem \ref{bl}, since $h\not=4\BNef[5]+f_3$.
 Let $\varphi$ be the associated morphism.
Since $h=3\BNef[5]+2f_3+f_2$ where $\BNef[5]\cdot f_2=0$, $\BNef[5]\cdot f_3=1$ and $f_2\cdot f_3=1$,
 then $h$ satisfies the hypothesis of \cite[Proposition 5.7, ii)]{SD}
 thus $\varphi(X)$ is a cone over a rational normal twisted cubic in $\pp^3$.
 Moreover, by \cite[(5.9.2)]{SD} we have the description of $\varphi$ given in the statement.
 Observe that $E_3$ is mapped to the section $S$, $E_1$ and $E_4$ to the fiber of $\ff_3\to \pp^1$ through $p$ 
 and $E_2$ is contracted to the point $p$.


By Theorem \ref{main-cox} the Cox ring $R(X)$ is generated in the following degrees: 
\[
f_1,\dots, f_{4},\ h_1,\ h_2,\ h_3,\ h_5,
\]
where $h_i:=\BNef[i]$ for $i=1,\dots,5$. Moreover, the set of such degrees is minimal, thus 
$R(X)$ has at least $8$ generators.
\end{proof}

\begin{corollary}\label{uni8}
The moduli space of K3 surfaces $X$ with $\NS(X)\simeq U\oplus{A}_{2}$
is unirational.
\end{corollary}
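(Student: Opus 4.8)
The plan is to follow the pattern of the previous corollaries: realise a general $X\in\mathcal F_8$ as the minimal resolution of a double cover of the fixed rational surface $\ff_3$ branched along a divisor that moves in a rational family, and then present $\mathcal M_8$ as a quotient of (an open subset of) that family by an algebraic group.

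First I would use Proposition \ref{gen v8}(i): a K3 surface $X$ with $\NS(X)\cong V_8=U\oplus A_2$ is the minimal resolution of a double cover $\mu\colon\overline X\to\ff_3$ branched along $E+B$, where $E$ is the section of $\ff_3$ with $E^2=-3$ and $B$ is a reduced curve. Writing $S=E$ and letting $F$ be a fibre, the double cover of $\ff_3$ branched along $E+B$ has trivial dualising sheaf exactly when $E+B\in|{-2}K_{\ff_3}|=|4S+10F|$, hence $B\in|3S+10F|$; by Riemann--Roch $h^0(\ff_3,3S+10F)=26$, so $|3S+10F|\cong\pp^{25}$. The curve $B$ is not a general member of this system: since $B\cdot S=1$ it meets $E$ transversally at a single point $p$, and (as in the discussion around Proposition \ref{gen v8}) the fibre $F_0$ through $p$ must be tangent to $B$ at a second point $q$ — this is precisely the condition that makes $\mu^{-1}(F_0)$ split into the two $(-2)$-curves of item (ii). Imposing that $B\cap F_0$ contain $p+2q$ amounts to three independent linear conditions on $B$, so for a fixed pair $(F_0,q)$ the admissible curves form a linear subsystem isomorphic to $\pp^{22}$.

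Next, since $\Aut(\ff_3)$ acts transitively on the pairs $(F_0,q)$ with $q\neq E\cap F_0$, I would fix such a pair once and for all. The admissible curves $B$ then range over a fixed $\pp^{22}$, and by Proposition \ref{gen v8} every member of $\mathcal F_8$ arises from a point of this $\pp^{22}$, so the induced map $\pp^{22}\dashrightarrow\mathcal M_8$ is dominant. Its general fibre is an orbit of the stabiliser $G=\mathrm{Stab}_{\Aut(\ff_3)}(F_0,q)$: the morphism $X\to\ff_3$ of Proposition \ref{gen v8} is defined by an intrinsic class in the Hilbert basis of the nef cone of $X$, so the pair $(\ff_3,E+B)$ is recovered from $X$ up to isomorphism. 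Hence $\mathcal M_8$ is birational to the quotient of an open subset of $\pp^{22}$ by $G$; since $\dim\Aut(\ff_3)=8$ we get $\dim G=6$ and the quotient has dimension $22-6=16=20-\rk\NS(X)$, as expected. A projective space is rational and $G$ is a connected algebraic group, so the quotient is unirational, which proves the corollary.

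The step I expect to require the most care is the tangency analysis: one should check that, for a general $B\in|3S+10F|$, requiring $B$ to pass through $p$ and to be tangent to $F_0$ at $q$ really is three independent linear conditions, that such a $B$ is smooth and meets $E$ transversally, and that the minimal resolution of the resulting double cover has N\'eron-Severi lattice isometric to $V_8$. This last point is handed to us by Proposition \ref{gen v8}, which exhibits every $X\in\mathcal F_8$ — in particular a general one — as such a double cover; the remaining verifications (transitivity of $\Aut(\ff_3)$ on the pairs $(F_0,q)$, and that the generic fibre of $\pp^{22}\dashrightarrow\mathcal M_8$ is a single $G$-orbit) are routine.
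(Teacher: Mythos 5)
Your proposal is correct and follows essentially the same route as the paper: both realise a general $X\in\mathcal F_8$ as a double cover of $\ff_3$ branched along $E+B$ with $B\in|3S+10F|$ ($h^0=26$ by Riemann--Roch and vanishing) tangent to the fibre through $B\cap E$, and then quotient by automorphisms of $\ff_3$ to land on a $16$-dimensional unirational space. The only difference is bookkeeping: the paper imposes the tangency as a codimension-one condition inside $\pp^{25}$ and divides by the full $8$-dimensional $\Aut(\ff_3)$, whereas you first normalise the pair $(F_0,q)$ by the transitive action and then divide the resulting linear system $\pp^{22}$ by the $6$-dimensional stabiliser --- a slightly cleaner way to see that the parameter space is rational before taking the quotient.
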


\begin{proof}
By item i) of Proposition \ref{gen v8} we know that $X$ 
is a double cover of $\ff_3$ branched along a curve of type 
$S+B$, where $S^2=-3$ and $B\in |3S+10F|$ is smooth, where $F$ is a fiber of $\ff_3\to \pp^1$.
Moreover, $B$ is tangent at one point to the the fiber $F_p$ of $\ff_3\to \pp^1$ through $\{p\}=B\cap S$.
By the Riemann-Roch Theorem
\[
\chi(B)=\tfrac{1}{2}B(B-K_{\ff_3})+1=26.
\]
By \cite[Chapter V, Corollary 2.18]{RH} $B$ is very  ample,
thus $h^{2}(X,B)=0$, and by Serre duality:
$
h^{1}(X,B)=h^{1}(X,K_{\ff_3}-B).
$
The divisor $B-K_{\ff_3}\sim 5(S+3F)$ is not ample but by \cite[Chapter V, Corollary 2.18]{RH},
it contains an irreducible non-singular curve, in particular $B-K_{\ff_3}$
is nef. Since $(B-K_{\ff_3})^{2}>0$, by Mumford vanishing Theorem, we have
$h^{1}(X,K_{\ff_3}-B)=0$. The linear system $|3S+10F|$  is thus $25$ dimensional. 
Since $B$ is very ample, imposing a tangency condition on $B$ at a point gives a codimension
$1$ space. The automorphism group of $\ff_{3}$ is $8$ dimensional,
thus the quotient of the space of curves of type $S+B$ with $B\in |3S+10F|$ 
which are tangent to the fiber $F_p$ is a $16$ dimensional unirational variety.
Thus the moduli space of K3 surfaces $X$ with $\NS(X)\simeq U\oplus A_{2}$
is unirational.
\end{proof}

\subsection*{The family $\mathcal F_9$}

Let $X$ be a K3 surface with $\NS(X)\cong V_9=U(2)\oplus A_2$.
By Theorem \ref{main-eff} $X$ contains four classes of $(-2)$-curves,
whose intersection matrix is given by Table \ref{Intmat} and whose intersection graph is given 
in Figure \ref{fig5}.
The Hilbert basis of the nef cone contains seven classes, two of them defining 
elliptic fibrations $\BNef[6]$ and $\BNef[7]$ without sections and with one fiber of type $\tilde A_2$.  

\begin{figure}[H]
\begin{center}
\begin{tikzpicture}[scale=1]

\draw (0,0) -- (2,0);
\draw (1,0) -- (1,1);
\draw (0,0) -- (1,1);
\draw (2,0) -- (1,1);

\draw (0,0) node {$\bullet$};
\draw (2,0) node {$\bullet$};
\draw (1,0) node {$\bullet$}; 
\draw (1,1) node {$\bullet$}; 

\draw (1,1) node [above]{$E_{1}$};
\draw (0,0) node [below]{$E_{2}$};
\draw (1,0) node [below]{$E_{4}$};
\draw (2,0) node [below]{$E_{3}$};

\end{tikzpicture}
\end{center} 
\caption{Intersection graph of $(-2)$-curves of $\mathcal F_9$}\label{fig5}
\end{figure}
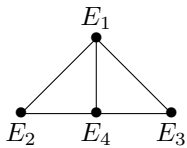

\begin{proposition}\label{gen v9}
Let $X$ be a K3 surface with $\NS(X)\cong V_9=U(2)\oplus A_2$. Then
\begin{enumerate}
\item  there is a minimal resolution $\varphi:X\to \overline X$ of a double cover 
$\pi:\overline X\to \pp^2$ branched along a plane sextic $B$ having two nodes $p_1,p_2$ and such that 
the line $L$ through the nodes is tangent to $B$ at one point;
\item the surface has four $(-2)$-curves:  two curves $R_1,R_2$ such that 
$\pi\varphi(R_i)=L$, $i=1,2$ and two curves $E_1,E_2$ with $\pi\varphi(E_i)=p_i$, $i=1,2$;
\item the Cox ring $R(X)$ is generated in the degrees given in Table \ref{TableGen rank4}, in particular 
has at least $10$ generators whose degrees are either classes of $(-2)$-curves 
or the classes $\BNef[i]$, for $i=1,2,3,4, 6, 7$, in the Hilbert basis of the nef cone.
\end{enumerate}
\end{proposition}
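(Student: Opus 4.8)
\medskip
\noindent\emph{Proof plan.}
The plan is to argue exactly as for the families $\mathcal F_3$ and $\mathcal F_5$: realise $X$ as a double cover of $\pp^2$ by means of a square-two class in the Hilbert basis of the nef cone, and then read off the Cox ring statement from Theorem \ref{main-cox}. Write $f_1,\dots,f_4$ for the classes of the four $(-2)$-curves of $X$ produced by Theorem \ref{main-eff}, and let $h\in\BNef(X)$ be the class with $h^2=2$ and $h\cdot f_i=0$ for exactly two of the $f_i$. From Table \ref{Intmat} those two $(-2)$-curves are disjoint, and from Table \ref{Table1} one checks that $h$ meets the two elliptic fibration classes $\BNef[6],\BNef[7]$ positively. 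Since no elliptic fibration of $X$ has a section, Corollary \ref{section} gives that $|h|$ is base point free, and Theorem \ref{hyp} c) then gives that $\varphi_h$ is a double cover of $\pp^2$. Factoring $\varphi_h$ through the contraction $\varphi\colon X\to\overline X$ of the two $f_i$ orthogonal to $h$ produces $\pi\colon\overline X\to\pp^2$ branched along a plane sextic $B$ with two ordinary double points $p_1,p_2$, the images of those two disjoint curves, which I will call $E_1,E_2$; thus $\varphi$ is the minimal resolution of $\overline X$. This gives part (i) apart from the tangency assertion and identifies $E_1,E_2$ in part (ii).

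Next I would use the relation $h=R_1+R_2+E_1+E_2$ in $\NS(X)$, where $R_1,R_2$ are the remaining two $(-2)$-curves (the relation is read off from the intersection data). It shows that $\varphi_h(R_1)=\varphi_h(R_2)$ is a line $L\subset\pp^2$, and since $R_i\cdot E_j>0$ the line $L$ passes through both nodes $p_1,p_2$. As $\pi^{-1}(L)=R_1\cup R_2$ is reducible, the double cover $\pi$ is trivial over $L$; since $L\cdot B=6$ while $L$ meets $B$ with multiplicity at least two at each of the two nodes, a short intersection computation forces the residual intersection $L\cdot B-2p_1-2p_2$ to be a single point $q$ counted twice, i.e.\ $L$ is tangent to $B$ at $q$. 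This finishes parts (i) and (ii).

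For part (iii), Theorem \ref{main-cox} applies because $\NS(X)\cong V_9$ is not isometric to $V_{14}$, so the degrees of a minimal homogeneous generating set of $R(X)$ lie in the finite list recorded in Table \ref{TableGen rank4}; it remains to exhibit at least ten degrees that are necessary. The four classes $f_1,\dots,f_4$ belong to $\BEff(X)$, so by the remarks in Section \ref{cox} any minimal generating set contains a section defining each of the four $(-2)$-curves. For the six nef classes $\BNef[i]$ with $i\in\{1,2,3,4,6,7\}$ --- all base point free, the pencils $\BNef[6],\BNef[7]$ by Corollary \ref{section} and the remaining four because they are big and nef without fixed part by Theorem \ref{bl} --- I would apply Proposition \ref{minimal}: one verifies that in every way of writing such a class as a non-negative integral combination of the other listed degrees the class of a $(-2)$-curve appears in the support, so that $R(X)$ has a generator in that degree. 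Together these give the ten required generators.

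The main obstacle is this last verification. For the two elliptic-pencil classes it is essentially formal, since a square-zero nef class written as a sum of nef classes forces every summand to be proportional to it, hence not a distinct element of $\BNef(X)$, so the only decompositions involve $(-2)$-curves. For $h$ and the three remaining big-and-nef classes, however, one must exclude \emph{all} alternative non-negative decompositions, which requires the complete intersection form of Table \ref{Intmat} and the full Hilbert basis of the nef cone; this is the finite but not entirely transparent check performed by the Magma programs accompanying \cite{A.C.L}. The geometric identification in parts (i) and (ii), by contrast, is routine once Theorem \ref{main-eff} is available.
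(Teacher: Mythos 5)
Your proposal is correct and follows essentially the same route as the paper: take $h=\BNef[5]$ (the unique square-two class in the Hilbert basis of the nef cone), use Corollary \ref{section} to get a base point free degree-two map to $\pp^2$ contracting the two disjoint $(-2)$-curves to nodes of the branch sextic, read the line and its tangency off the relation $h=f_1+f_2+f_3+f_4$, and defer item (iii) to Theorem \ref{main-cox} and the Proposition \ref{minimal} computations. The only difference is that you spell out the tangency via triviality of the cover over $L$ and parity of intersection multiplicities, a detail the paper leaves implicit.
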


\begin{proof}
Let $f_1,\dots, f_4$ be the classes of the $(-2)$-curves  and $h=\BNef[5]$.  
We have that  $h^2=2$,  $h\cdot f_i=1$ for $i=1,4$ and $h\cdot f_i=0$ for $i=2,3$.
By Corollary \ref{section} the associated linear system is base point free 
and thus defines a degree two morphism $\pi:X\to \pp^2$ which contracts the $(-2)$-curves 
of classes $f_2, f_3$. Since $f_2\cdot f_3=0$, the branch locus of $\pi$  
is a plane sextic $B$ with two nodes at $p,q\in \pp^2$. 
Moreover $f_1+f_2+f_3+f_4=h$ and $f_1\cdot f_4=1$. Thus the $(-2)$-curves 
of classes $f_1, f_4$ are mapped to a line $L$ passing through $p,q$ 
and tangent to $B$ at one more point.
The last item follows from Theorem \ref{main-cox}. 
 \end{proof}

\begin{proposition}\label{uni9}
The moduli space of K3 surfaces $X$ with $\NS(X)\simeq U(2)\oplus A_{2}$
is unirational. 
\end{proposition}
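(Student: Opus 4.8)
The plan is to run the same argument used for the other double-plane families, for instance in Corollary~\ref{uni5} and Proposition~\ref{uni3}, starting from the projective model of Proposition~\ref{gen v9}. By that proposition, a K3 surface $X$ with $\NS(X)\cong V_9=U(2)\oplus A_2$ is the minimal resolution of a double cover of $\pp^2$ branched along a plane sextic $B$ having two nodes $p_1,p_2$ whose connecting line $L$ is tangent to $B$ at one further point $q$; conversely, the minimal resolution of such a double plane carries the four $(-2)$-curves of Figure~\ref{fig5}, whose classes span a sublattice isometric to $V_9$. Since $B$ and the points $p_1,p_2,q$ are recovered from $X$, up to projective equivalence, as the branch data of the covering involution, the moduli space $\mathcal M_9$ is birational to the space of such marked sextics modulo ${\rm PGL}_3(\cc)$.

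First I would rigidify the configuration: using ${\rm PGL}_3(\cc)$ we may take $p_1=(1:0:0)$ and $p_2=(0:1:0)$, so that $L=\{x_2=0\}$, and $q=(1:1:0)$. Plane sextics with a node at $p_1$ and a node at $p_2$ form a linear system of projective dimension $27-6=21$. For such a sextic the restriction $B|_L$ automatically has multiplicity at least two at $p_1$ and at $p_2$, so it lies in a three-dimensional space of sections of $\oo_L(6)$, and requiring in addition a double point at $q$ is two more linear conditions. A short cohomological check — the vanishing of $H^1$ of the ideal sheaf of the two fat points of $\pp^2$ twisted by $\oo_{\pp^2}(5)$ — shows that the relevant restriction map is surjective, hence no conditions are lost and one gets a linear system $\Lambda\cong\pp^{19}$. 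The stabilizer $G\subset{\rm PGL}_3(\cc)$ of the triple $(p_1,p_2,q)$ consists of the classes of matrices $\mathrm{diag}(a,a,f)$ with an arbitrary third column, so it is connected of dimension $3$. Thus $\Lambda/G$ has dimension $19-3=16$, matching $\dim\mathcal M_9=20-\rho(X)=16$.

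To conclude I would argue that the resulting rational map from $\Lambda$ to $\mathcal M_9$, assigning to a sextic the associated K3 surface, is dominant. It factors through $\Lambda/G$, which is generically finite onto its image because $B$ and $(p_1,p_2,q)$ can be reconstructed from $X$; hence the image has dimension $16$ and, by irreducibility of $\mathcal M_9$, is dense. In particular a very general member of $\Lambda$ yields a K3 surface whose N\'eron-Severi lattice is exactly $V_9$, the locus where the Picard number jumps being a countable union of proper subvarieties. Finally $\Lambda\cong\pp^{19}$ is rational, so $\Lambda/G$ is unirational (and even rational, since $G$ is connected and solvable), whence $\mathcal M_9$ is unirational.

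The essential input is Proposition~\ref{gen v9}; granting it, the only point that truly needs care is the dimension count for $\Lambda$, i.e.\ that ``node at $p_1$'', ``node at $p_2$'' and ``$B|_L$ has a double point at $q$'' impose independent linear conditions on plane sextics. I expect this to be the main obstacle, although a minor one: it is settled either by the cohomological argument above or, in the style of the other families, by exhibiting a single explicit sextic of the prescribed type with smooth double cover. Everything else is the routine bookkeeping that appears in the unirationality statements for the remaining double-plane families.
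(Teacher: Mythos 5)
Your proposal is correct and follows essentially the same route as the paper's proof: normalize $p_1,p_2,q$ on the line $L$ by a projectivity, observe that the sextics with the prescribed nodes and tangency form a $\pp^{19}$, and quotient by the $3$-dimensional stabilizer of the three points to land on a $16$-dimensional unirational space matching $\dim\mathcal M_9$. The extra remarks you add (the cohomological independence of the linear conditions and the dominance of the induced map to moduli) are refinements of steps the paper treats as explicit computation, not a different argument.
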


\begin{proof} By item i) of Proposition \ref{gen v9} $X$ is the minimal resolution 
of a double cover of $\pp^2$ branched along a plane sextic $B$ with two nodes $p_1,p_2$
such that the line $L$ through $p_1,p_2$ is tangent to $B$ at one more point $q$.
Up to projectivities we can assume that $p_{1}=(1,0,0)$, $p_{2}=(0,1,0)$ 
and $q=(-1,1,0)$. Using explicit computations, we get
that the projective linear system of sextics with nodes at $p_{1},p_{2}$
and tangent to the line $L$ at $q$ is $19$-dimensional. Modding
out by the automorphisms of $\pp^2$ preserving $p_{1},p_{2},q$, which are of
the form 
\[
\left(\begin{array}{ccc}
1 & 0 & *\\
0 & 1 & *\\
0 & 0 & *
\end{array}\right),
\]
we obtain an unirational space of dimension $16$. Since the dimension
of the moduli space of the K3 surfaces with $\NS(X)\simeq U(2)\oplus{A}_{2}$
is also $16$, the claims follows. 
\end{proof}

\subsection*{The family $\mathcal F_{10}$}

Let $X$ be a K3 surface with $\NS(X)\cong V_{10}=U(3)\oplus A_2$.
By Theorem \ref{main-eff} $X$ contains four $(-2)$-curves whose 
intersection matrix is given in Table \ref{Intmat} and whose intersection graph is described in Figure \ref{fig5}.
The Hilbert basis of the nef cone of $X$ contains five classes. 
The classes $\BNef[2], \BNef[3], \BNef[4]$  and $\BNef[5]$
define elliptic fibrations  without sections and with one 
fiber of type $\tilde A_2$. 

\begin{figure}[H]
\begin{center}
\begin{tikzpicture}[scale=1]

\draw (0,0) -- (1,0);
\draw (1,0) -- (1,1);
\draw (0,1) -- (1,1);
\draw (0,1) -- (0,0);
\draw (0,1) -- (1,0);
\draw (1,1) -- (0,0);

\draw (0,0) node {$\bullet$};
\draw (1,0) node {$\bullet$};
\draw (1,1) node {$\bullet$}; 
\draw (0,1) node {$\bullet$}; 

\draw (0,0) node [left]{$E_{4}$};
\draw (1,0) node [right]{$E_{3}$};
\draw (1,1) node [right]{$E_{2}$};
\draw (0,1) node [left]{$E_{1}$};

\end{tikzpicture}
\end{center} 
\caption{Intersection graph of $(-2)$-curves of $\mathcal F_{10}$}
\end{figure}
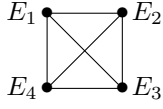

\begin{proposition}\label{gen v10}
Let $X$ be a K3 surface with $\NS(X)\cong V_{10}=U(3)\oplus A_2$. Then
\begin{enumerate}
\item  $X$ is isomorphic to a smooth quartic 
 surface in $\pp^3$ having one hyperplane section which is the union of four lines;
  \item a general $X$ can be defined by an equation of the form
 \[
F_0(x_0,\dots, x_3)G(x_0,\dots, x_3)+F_1(x_0,\dots, x_3)F_2(x_0,\dots, x_3)F_3(x_0,\dots, x_3)F_4(x_0,\dots, x_3)=0,
 \]
 where $F_i\in\cc[x_0,\dots, x_3]$ are homogeneous of degree one for $i=0,\dots, 4$
 and $G\in\cc[x_0,\dots, x_3]$ is homogeneous of degree three;
 \item the surface has four $(-2)$-curves: the four lines;
  \item the Cox ring of  $X$ is generated by $s_1,\dots, s_8$, where
  $s_1,\dots,s_4$ define the $(-2)$-curves and $s_5,\dots, s_8$ define
  each a smooth fiber of one of the elliptic fibrations of $X$;
\item  for  a very general $X$ as before we have an isomorphism 
\[
 \cc[T_1,\dots, T_8]/I\to R(X),\ T_i\mapsto s_i,
\]
where the degrees of the generators $T_i$ for $i=1,\dots, 8$ are given by the columns of the following matrix
\[
\left(
\begin{array}{ccccccccc}
0&0 & -1 &0 & -1 &-1  &-1& 0 \\
0& -1& 0 &0 &-1  &-1  &0  &-1  \\
0& 1 &1  &-1 &1 &2 & 0 & 0 \\
-1&1  & 1 &0 &2 &1 & 0 & 0
\end{array}
\right)
\]
and the ideal $I$ is generated by the following polynomials:
\begin{eqnarray*}
&T_1T_2T_3T_4-\tilde F_0(T_1T_5,T_4T_6,T_2T_7,T_3T_8),\\
&T_5T_6T_7T_8-\tilde G(T_1T_5,T_4T_6,T_2T_7,T_3T_8),
\end{eqnarray*}
where $\tilde F_0=F\circ \varphi^{-1}$ and $\tilde G=G\circ \varphi^{-1}$ 
are obtained from $F_0,G$ in item ii) respectively composing with 
the coordinate change in $\pp^3$ given by 
\[
\varphi(x_0,x_1,x_2,x_3)=(F_1, F_2, F_3, F_4).
\]

\end{enumerate}
\end{proposition}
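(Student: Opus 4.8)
The plan is to follow the scheme of the preceding propositions, in particular Propositions~\ref{gen v7} and~\ref{gen v6}. Let $f_1,\dots,f_4$ be the classes of the four $(-2)$-curves, so $f_i^2=-2$ and $f_i\cdot f_j=1$ for $i\neq j$ (Table~\ref{Intmat}); these four classes are the extremal rays of $\Eff(X)$ and form a $\zz$-basis of $\NS(X)$. Put $h=\BNef[1]$, the unique element of $\BNef(X)$ which is not an elliptic fibration class; a short computation gives $h=f_1+f_2+f_3+f_4$, hence $h^2=4$, $h\cdot f_i=1$ for all $i$ and $h\cdot\BNef[k]=3$ for $k=2,3,4,5$. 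Since $h$ is strictly positive on every extremal ray of $\Eff(X)$ it is ample; by Corollary~\ref{section} the linear system $|h|$ is base point free (no elliptic fibration of $X$ has a section), and since $h^2=4\neq 8$ and the only classes of smooth elliptic curves on $X$ are $\BNef[2],\dots,\BNef[5]$, each of $h$-degree $3$, the divisor $h$ is non-hyperelliptic by Theorem~\ref{hyp}. Thus $\varphi_h$ embeds $X$ in $\pp^3$ as a smooth quartic surface, and $h=f_1+f_2+f_3+f_4$ shows that the images $L_1,\dots,L_4$ of the $(-2)$-curves are four lines cut out on $X$ by a single hyperplane; together with Theorem~\ref{main-eff} this gives items~(i) and~(iii). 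For~(ii) I choose coordinates on $\pp^3$ so that the plane spanned by $L_1,\dots,L_4$ is $\{F_0=0\}$ with $F_0=x_0$: the restriction of the defining quartic to this plane is a quartic form in $x_1,x_2,x_3$ vanishing on $L_1\cup\dots\cup L_4$, hence a product $\ell_1\ell_2\ell_3\ell_4$ of linear forms with $\ell_i$ cutting out $L_i$; lifting each $\ell_i$ to a linear form $F_i$ on $\pp^3$ independent of $x_0$, the difference between the quartic and $F_1F_2F_3F_4$ vanishes on $\{x_0=0\}$, hence is divisible by $F_0$, with cubic quotient $G$, giving $F_0G+F_1F_2F_3F_4=0$. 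For a general $X$ the $L_i$ are in general position in the plane, so $F_1,\dots,F_4$ are linearly independent and $\varphi(x)=(F_1,F_2,F_3,F_4)$ is invertible.

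For~(iv), by Theorem~\ref{main-cox} (hence Theorem~\ref{gen}) a minimal homogeneous generating set of $R(X)$ has degrees among the classes of $(-2)$-curves and the sums of at most three elements of $\BNef(X)=\{h,\BNef[2],\dots,\BNef[5]\}$. A generator in each degree $f_i$ is forced because the $f_i$ are extremal in $\Eff(X)$. A generator in each degree $\BNef[k]$, $k=2,\dots,5$, is forced by Lemma~\ref{l2} together with Proposition~\ref{minimal}\,(iii): each such class is the triangle of three of the $L_i$, hence base point free, its three components meet pairwise in three distinct points, and intersecting with $h$ shows it cannot be written as a non-negative integral combination of the remaining candidate degrees without involving one of those three components. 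Conversely, no generator is needed in degree $h$: for each $i$ the plane $\{F_i=0\}$ meets $X$ in $L_i+C_i$ with $C_i$ a fibre of the pencil $|h-f_i|=|\BNef[k]|$, so $F_i|_X$ is (up to a scalar) the product of the section defining $L_i$ with a section of $H^0(\BNef[k])$; the latter space is two-dimensional, spanned by any smooth fibre section together with the product of the three $(-2)$-sections of the reducible fibre, and for a general $X$ the four products $F_i|_X$ span $H^0(h)$. The remaining degrees, namely the sums of two or three elements of $\BNef(X)$ involving $h$, are discarded using Theorem~\ref{teokoszul} and Proposition~\ref{ottem}, as carried out by the Magma routines of \cite[\S3.3]{A.C.L}. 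This proves that $s_1,\dots,s_8$, with $s_1,\dots,s_4$ defining the $(-2)$-curves and $s_5,\dots,s_8$ defining smooth fibres of the four elliptic fibrations, is a minimal generating set.

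For~(v) one refines the choice of generators so that, for general $X$, $s_5,s_6,s_7,s_8$ are sections defining the residual cubics $C_1,C_4,C_2,C_3$ of the planes $\{F_i=0\}$ (legitimate because for general $X$ each $C_i$ is a smooth irreducible fibre, hence independent from the corresponding triangle section). After rescaling the $s_i$ one gets $F_0|_X=s_1s_2s_3s_4$, $F_1|_X=s_1s_5$, $F_2|_X=s_2s_7$, $F_3|_X=s_3s_8$, $F_4|_X=s_4s_6$, whence $F_1F_2F_3F_4|_X=s_1s_2s_3s_4\cdot s_5s_6s_7s_8$, and $F_0G=-F_1F_2F_3F_4$ on $X$ gives $G|_X=-s_5s_6s_7s_8$. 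Writing $F_0,G$ in the coordinates $y_i=F_i$ as $\tilde F_0,\tilde G$ and substituting $y_i=F_i|_X$, the two displayed polynomials land in the kernel $I$ of the surjection $\cc[T_1,\dots,T_8]\to R(X)$, $T_i\mapsto s_i$. Let $J\subseteq I$ be the ideal they generate. Arguing as in the proof of \cite[Theorem~3.5]{A.C.L}, $J$ is prime of height $2$ for general $F_0,G,F_1,\dots,F_4$, so $\cc[T_1,\dots,T_8]/J$ is an integral domain of Krull dimension $8-2=6=\dim X+\rk\Cl(X)=\dim R(X)$ which surjects onto the integral domain $R(X)$; a surjection of integral domains of equal Krull dimension is an isomorphism, hence $I=J$ and $\cc[T_1,\dots,T_8]/I\cong R(X)$.

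The step I expect to be the main obstacle is the primality of $J$ (of height $2$) for a general member of the family, equivalently that there are no relations among $s_1,\dots,s_8$ beyond the two exhibited ones: the degree bookkeeping and the final dimension comparison are routine, but establishing primality requires transporting the elimination/specialization argument of \cite[Theorem~3.5]{A.C.L} to this two-relation situation, and the genericity needed there is also what confines the explicit forms in parts~(ii) and~(v) to general $X$.
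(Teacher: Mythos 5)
Your overall route is the same as the paper's: the class $h=\BNef[1]$ gives the smooth quartic model, the decompositions $h=f_1+f_2+f_3+f_4$ and $h=\BNef[k]+f_i$ give items (i)--(iv), the products $F_i|_X=s_is_{j}$ spanning $H^0(h)$ remove the need for a generator in degree $h$, and the two relations together with the primality/dimension argument of \cite[Theorem 3.5]{A.C.L} give (v). Your derivation of the equation in (ii), by restricting the quartic to the plane of the four lines and dividing the difference by $F_0$, is a clean and slightly more elementary substitute for the paper's appeal to the B\'ezout-type argument of Proposition \ref{gen v7}, and your justification of which degrees are forced (extremality of the $f_i$, existence of a reducible fibre in each of the four fibrations) is consistent with what the paper invokes via Proposition \ref{minimal}.

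There is, however, one step that fails as written. You lift each $\ell_i$ to a linear form $F_i$ on $\pp^3$ \emph{independent of $x_0$} and then assert that, for general $X$, the four forms $F_1,\dots,F_4$ are linearly independent. This is impossible: four linear forms in the three variables $x_1,x_2,x_3$ span a subspace of dimension at most three, however general the lines are (indeed with your lifts one always has $F_4\in\langle F_1,F_2,F_3\rangle$, since $\ell_4$ is a combination of $\ell_1,\ell_2,\ell_3$ on the plane). Since you use this independence both to conclude that $s_1s_5,s_4s_6,s_2s_7,s_3s_8$ span the four-dimensional space $H^0(h)$ --- hence that no generator of degree $h$ is needed --- and to invert $\varphi=(F_1,F_2,F_3,F_4)$ in item (v), the slip propagates. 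The repair is the one implicit in the paper's phrase ``$F_1,\dots,F_4$ can be chosen to be independent'': each $F_i$ is only determined modulo $F_0$, and replacing $F_i$ by $F_i+cF_0$ replaces $G$ by $G-cF_jF_kF_l$ while preserving the shape of the equation; provided no three of the four lines are concurrent (true for general $X$), one has $F_1,F_2,F_3$ independent and $F_0\notin\langle F_1,F_2,F_3\rangle$, so a suitable representative $F_4+cF_0$ makes the four forms genuinely independent in $H^0(\pp^3,\Osh_{\pp^3}(1))$. With that adjustment the rest of your argument goes through and coincides with the paper's proof.
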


\begin{proof}
Let $f_1,\dots, f_4$ be the classes of the $(-2)$-curves and let 
$h=\BNef[1]$. Then  $h^2=4$ and $h\cdot f_i=1$ for all $i$.
Thus $h$ is ample and  is non-hyperelliptic by Proposition \ref{hyp}.
By Corollary \ref{section} the linear system associated to $h$ is base point free,
thus it defines an embedding of $X$ in $\pp^3$ as a smooth quartic surface.
Observe that $h=f_1+f_2+f_3+f_4$ with $h\cdot f_i=1$ for all $i$. 
This means that $X$ has one hyperplane section which is the union of 
four lines. 

By Theorem \ref{main-cox} the Cox ring $R(X)$ is generated in the following degrees: 
\[
f_1,\dots, f_{4},\ h,\  e_1,\ e_2,\ e_3,\ e_4,
\]
where $e_1, e_2, e_3, e_4$ define the four elliptic fibrations of $X$.
Clearly any minimal generating set of $R(X)$ must contain the sections $s_1,\dots,s_{4}$ defining 
the $(-2)$-curves and generators $s_5,\dots, s_8$ defining smooth fibers of  the elliptic fibrations. 
Moreover, observe that
\[
h=e_1+f_1=e_2+f_4=e_3+f_2=e_4+f_3.
\]
An argument similar to the one in the proof of Proposition \ref{gen v7} 
shows that $X$ can be defined by an equation of the form 
$F_0G+F_1F_2F_3F_4=0$ in $\pp^3$, where $F_i$ are homogeneous of degree one for $i=0,\dots,4$ 
and $G$ of degree $3$. 
Since $F_1,\dots,F_4$ can be chosen to be independent, 
then $s_1 s_5, s_4s_6, s_2s_7, s_3s_8$ are a basis of $H^0(h)$.
Thus a generator in degree $h$ is not necessary.  
The first relation is due to the fact that the hyperplane section $F _0=0$ is the union of the
four lines whose defining sections are $T_i$ for $i=1,2,3,4$.
The second relation is due to the fact that the four elliptic curves defined by 
$s_i$, $i=5,\dots, 8$, are cut out by the cubic $G=0$.

It can be proved with the same type of argument used in the proof of 
\cite[Theorem 3.5]{A.C.L}  that the ideal $I$ is prime for general $G, F_0$. 
Since $\cc[T_1,\dots,T_8]/I$ is an integral domain of dimension $\dim R(X)= \dim (X)+ \rk \Cl(X)=6$ 
which surjects onto $R(X)$, then it is isomorphic to $R(X)$.
\end{proof}

The proof of the following result is similar to that of Corollary \ref{uni7}.

\begin{corollary}\label{uni10}
The moduli space of K3 surfaces $X$ with $\NS(X)\simeq U(3)\oplus {A}_{2}$
is unirational.
\end{corollary}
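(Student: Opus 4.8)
The plan is to reproduce, almost verbatim, the proof of Corollary~\ref{uni7}, using the explicit equation of item~ii) of Proposition~\ref{gen v10} in place of the one used for $\mathcal F_7$. By that proposition a general member $X$ of $\mathcal F_{10}$ is isomorphic to a smooth quartic surface in $\pp^3$ cut out by an equation
\[
F_0 G + F_1 F_2 F_3 F_4 = 0,\qquad F_i\in H^0(\pp^3,\Osh_{\pp^3}(1)),\ G\in H^0(\pp^3,\Osh_{\pp^3}(3)).
\]
Conversely, I would start from general such polynomials: for a general choice the quartic $Y$ is smooth, and its plane section $\{F_0=0\}\cap Y$ is the union of the four lines $L_i=\{F_0=F_i=0\}$, $i=1,\dots,4$, in general position in the plane $\{F_0=0\}$. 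A direct computation on $Y$ gives $L_i^2=-2$ and $L_i\cdot L_j=1$ for $i\neq j$, so that $L_1+L_2+L_3+L_4$ is the hyperplane class and the Gram matrix of $L_1,L_2,L_3,L_4$ is the $4\times 4$ matrix with $-2$ on the diagonal and $1$ off it; this lattice has discriminant $-27$, signature $(1,3)$, and one checks it is isometric to $V_{10}=U(3)\oplus A_2$. Thus the classes of the four lines generate a rank-four sublattice of $\Pic(Y)$ isometric to $V_{10}$.

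The next step is the standard countability/genericity argument, identical to the one implicit in Corollary~\ref{uni7}: the locus of such quartics for which $\NS(Y)$ properly contains this copy of $V_{10}$ is a countable union of proper subvarieties, so a very general such $Y$ has $\NS(Y)\cong V_{10}$; and since by Proposition~\ref{gen v10}(i)--(ii) the general $X\in\mathcal F_{10}$ is a quartic of this shape, the family of these quartics dominates $\mathcal M_{10}$. It then suffices to show that this family, taken up to projective equivalence, is unirational. Consider the morphism
\[
\Phi\colon H^0(\pp^3,\Osh_{\pp^3}(1))^{\oplus 5}\oplus H^0(\pp^3,\Osh_{\pp^3}(3))\to H^0(\pp^3,\Osh_{\pp^3}(4)),\quad (F_0,\dots,F_4,G)\mapsto F_0G+F_1F_2F_3F_4.
\]
Its source is an affine space, hence rational, so an open subset of its image is unirational; taking the quotient of such an open subset by the natural ${\rm PGL}(4,\cc)$-action preserves unirationality, and this quotient dominates an open subset of $\mathcal M_{10}$. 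Hence $\mathcal M_{10}$ is unirational.

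The only points demanding genuine care are the smoothness of the general $Y$ and the assertion that the four lines span exactly $V_{10}$ rather than some finer overlattice, together with the countability argument pinning down the Picard lattice of the very general member; all of these are carried out, in the parallel situation, in the proof of Corollary~\ref{uni7}, so I expect to transcribe that argument with only the cosmetic change of degrees. A secondary check is that $\Phi$ is dominant onto the relevant family of quartics, i.e.\ that every general $X\in\mathcal F_{10}$ is realized this way, which is exactly the content of item~ii) of Proposition~\ref{gen v10}.
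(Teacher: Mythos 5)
Your proposal is correct and is essentially the paper's own argument: the authors dispose of this corollary with the single line that ``the proof is similar to that of Corollary \ref{uni7}'', and what you write out is exactly that transcription, using the equation $F_0G+F_1F_2F_3F_4=0$ from Proposition \ref{gen v10}(ii), the map $\Phi$ from an affine space of polynomial tuples, and the quotient by ${\rm PGL}(4,\cc)$. Your lattice check is also consistent with the paper's data (Table \ref{Intmat} lists precisely the Gram matrix with $-2$ on the diagonal and $1$ off it for $V_{10}$, and the four listed classes form a $\zz$-basis), so nothing further is needed.
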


\begin{example}
Consider the following quartic in $\pp^{3}$:
\[
X:\ x_0x_1x_2x_3-(x_0+x_1+x_3)(x_0^{3}+x_0^{2}x_1+x_1^{3}+x_1x_2^{2}+x_2^{3}+x_0x_1x_3+x_2x_3^{2})=0.
\]
This is a smooth surface, with good reduction $X_{2}$ at prime $2$.
Using the Tate and Artin-Tate conjectures, one finds that $X_{2}$
has Picard number $4$ and 
\[
|\text{Br}(X_{2})|\cdot|\text{disc}(\NS(X_{2}))|=3^{3}.
\]
Since no K3 surface $Y$ with $\NS(Y)\simeq U\oplus{A}_{2}$ can
be embedded as a quartic in $\pp^{3}$ (this can be checked directly looking 
at the self-intersections of the elements in Hilbert basis of the nef cone of the family $\mathcal F_8$), 
we have $\NS(X_{2})\simeq U(2)\oplus{A}_{2}$.
Since $X$ has Picard number at least $4$, we conclude that $\NS(X)\simeq U(3)\oplus{A}_{2}$. 
\end{example}

\subsection*{The family $\mathcal F_{11}$}

Let $X$ be a K3 surface with $\NS(X)\cong V_{11}=U(6)\oplus A_2$.
By Theorem \ref{main-eff} $X$ contains six  $(-2)$-curves whose 
intersection matrix is given in Table \ref{Intmat}.
The Hilbert basis of the nef cone of $X$ contains $27$ classes, 
eight of them defining elliptic fibrations:
$\BNef[i]$ with $i=4, 7, 13, 16, 18, 21, 26, 27$ 
without sections and with one fiber of type $\tilde A_2$. 

\begin{proposition}\label{gen v11}
Let $X$ be a K3 surface with 
$\NS(X)\cong V_{11}=U(6)\oplus A_2$.
Then
\begin{enumerate}
\item  $X$ is isomorphic to a smooth quartic 
 surface in $\pp^3$ having three reducible hyperplane sections which are the union of two conics;
 \item $X$ contains six $(-2)$-curves: the six conics;
 \item the Cox ring of $X$ is generated in the degrees given in Table \ref{TableGen rank4}, in particular it 
 has at least $20$ generators.
 \end{enumerate}
\end{proposition}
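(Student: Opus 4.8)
The plan is to argue exactly as for the families $\mathcal F_7$ and $\mathcal F_{10}$ (Propositions \ref{gen v7} and \ref{gen v10}). First I would let $f_1,\dots,f_6$ be the classes of the six $(-2)$-curves from Table \ref{Table1}, and select the class $h\in\BNef(X)$ with $h^2=4$ and $h\cdot f_i=2$ for every $i$; since $h$ meets every $(-2)$-curve positively it is ample. To check that $h$ is non-hyperelliptic I would go through Theorem \ref{hyp}: case c) is excluded because $h^2=4\neq 2$, case b) would force $h^2=8$, and case a) is excluded by inspecting Table \ref{Table1} to see that no class of square $0$ in $\BNef(X)$ — by Theorem \ref{bl} a) these are the multiples of the elliptic pencils — meets $h$ in $2$. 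Then Corollary \ref{section} gives that $|h|$ is base point free and Theorem \ref{bl} c) realizes $X$ as a smooth quartic surface in $\pp^3$. Finally, reading off from Table \ref{Table1} the three relations $h=f_a+f_b=f_c+f_d=f_e+f_f$, with $\{a,b\},\{c,d\},\{e,f\}$ a partition of $\{1,\dots,6\}$, together with $h\cdot f_i=2$ for all $i$, shows that these three hyperplane sections split as unions of two conics and that the images of the six $(-2)$-curves are precisely the six conics; this proves items i) and ii).

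For item iii) I would invoke Theorem \ref{main-cox}, which — since $\NS(X)$ is not isometric to $V_{14}$ — gives the degrees of a minimal generating set of $R(X)$ as the entries of Table \ref{TableGen rank4}. The way this list is produced is the procedure of Section \ref{cox}: by Theorem \ref{gen}, and since degrees of the form $[2(F+F')]$ occur only when $\NS(X)$ is isometric to $V_5$ or $V_9$, one tests the finitely many nef classes that are sums of at most three elements of $\BNef(X)$; each is either discarded by Theorem \ref{teokoszul}, Proposition \ref{va} or Proposition \ref{ottem}, or shown to be necessary by Lemma \ref{l2} or Proposition \ref{minimal}, precisely as carried out by the Magma routines of \cite{A.C.L}. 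For the explicit lower bound I would count by hand: a generator is forced in each of the six classes $f_1,\dots,f_6$; a generator is forced in the class of each of the eight elliptic fibrations of $X$, since the general fibre is irreducible while the reducible fibre of type $\tilde A_2$ is a sum of three $(-2)$-curves, so Proposition \ref{minimal} i) applies verbatim as in Propositions \ref{gen v2} and \ref{gen v10}; and the remaining nef degrees recorded in Table \ref{TableGen rank4} that Proposition \ref{minimal} certifies as necessary (in particular $h$ itself, whose three products of partner-conic sections span only a $3$-dimensional subspace of the $4$-dimensional space $H^0(h)$) bring the total to at least $20$.

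The step I expect to be the actual obstacle is the bookkeeping behind item iii): enumerating the candidate nef degrees coming from triples of a $27$-element Hilbert basis, and deciding for each whether the cohomology vanishings in the hypotheses of Theorem \ref{teokoszul}, Proposition \ref{va} or Proposition \ref{ottem} hold — so that the degree contributes no generator — or whether one can exhibit a base point free splitting $w=w_1+w_2+w_3$ into $(-2)$-classes satisfying the hypotheses of Proposition \ref{minimal}. This is exactly the part delegated to \cite{A.C.L}, so on paper the argument reduces to quoting Table \ref{TableGen rank4} and checking that the elementary count above yields at least $20$ generators.
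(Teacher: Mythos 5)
Your treatment of items i) and ii) is exactly the paper's: the paper takes $h=\BNef[25]=(-1,-1,2,2)$, notes $h^2=4$ and $h\cdot f_i=2$, excludes hyperellipticity via Theorem \ref{hyp}, applies Corollary \ref{section}, and reads off the three splittings $h=f_1+f_5=f_2+f_4=f_3+f_6$. For item iii) your reduction to Theorem \ref{main-cox} and the Magma minimality tests is also the paper's route, and your forced generators in the six $(-2)$-degrees and the eight elliptic-fibration degrees agree with it.

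The one genuine discrepancy is in how you complete the count from $14$ to $20$. You name $h$ itself as one of the degrees that Proposition \ref{minimal} certifies as necessary, but in Table \ref{TableGen rank4} the class $\BNef[25]=h$ is \emph{starred}, i.e.\ it is precisely one of the degrees the minimality test fails to certify. The reason is visible from the hypotheses of Proposition \ref{minimal}: the three decompositions $h=f_1+f_5=f_2+f_4=f_3+f_6$ have pairwise disjoint supports, so no single $(-2)$-class (condition i), no pair of intersecting $(-2)$-classes (condition ii), and no three-term splitting (condition iii) applies. The six degrees that actually carry the count to $20$ in the paper are the unstarred nef classes of self-intersection $6$, namely $\BNef[i]$ for $i\in\{9,11,15,20,23,24\}$. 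Your parenthetical hand argument for $h$ — that the three reducible hyperplane sections span only a $3$-dimensional subspace of the $4$-dimensional $H^0(h)$ — is in fact sound once one checks that no other decomposition of $h$ into effective generator degrees exists (every nef Hilbert-basis class $v\neq h$ has $h\cdot v\geq 6>h^2$, and the only writings of $h$ as a nonnegative sum of $(-2)$-classes are the three pairs) and that the three planes are linearly independent (they pairwise meet in three distinct lines). So your argument would actually \emph{strengthen} the paper by removing the star from $\BNef[25]$; but as written you conflate it with what Proposition \ref{minimal} certifies, and without explicitly invoking the six square-$6$ classes your tally does not visibly reach $20$ through the table alone. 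State the six certified degrees explicitly, or carry out the $H^0(h)$ argument in full, and the proof is complete.
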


\begin{proof}
Let $f_1,\dots, f_6$ be the classes of the $(-2)$-curves 
and $h=\BNef[25]$. Then $h^2=4$, $h\cdot f_i=2$ for all $i$ and by Proposition \ref{hyp} it is non-hyperelliptic.
By Corollary \ref{section} the associated linear system is base point free.
thus it defines an embedding of $X$ in $\pp^3$ as a smooth quartic surface.
Observe that  $h=f_1+f_{5}=f_2+f_4=f_3+f_{6}$.
This means that $X$ has three reducible hyperplane sections which are the union of two conics.

By Theorem \ref{main-cox} the Cox ring $R(X)$ is generated in the following degrees: 
\[
f_1,\dots, f_6,\ h_1,\dots, h_{14},\ h^*, \ h_1^*,\dots, h_{12}^*
\]
where $h_i$ and $h_i^*$ are classes in the Hilbert basis of the nef cone such that 
\[
h_i \in \{\BNef[j]:  j=4, 7, 9, 11, 13, 15, 16, 18, 20, 21, 23, 24, 26, 27\}
\]
\[
h_i^*\in \{\BNef[j]:  j=1-3, 5, 6, 8, 10, 12, 14, 17, 19, 22, 25\}.
\]
By Proposition \ref{hyp}, the classes $h_i$ and $h_i^*$ are non-hyperelliptic for all $i$, and if $v:=\BNef[i]$ we have 
\begin{enumerate}[]
\item $v^2=0$ for $i=4, 7, 13, 16, 18, 21, 26,27$
\item $v^2=6$ for $i=9, 11, 15, 20, 23, 24$
\item $v^2=10$ for $i=1, 2, 3, 5, 6, 8, 10, 12, 14, 17, 19, 22$.
\end{enumerate}
By the minimality test (Proposition \ref{minimal}) the degrees not marked with a star are necessary to generate $R(X)$.
Thus $R(X)$ has at least $20$ generators.
\end{proof}

Let $X$ be a K3 surface which has $3$ hyperplane sections $P_{k}$,
$k\in\{1,2,3\}$, each of which being union of two conics $C_{2k-1},C_{2k}$.
Let us denote by $L_{k}$ the line which is intersection of the planes
$P_{i},P_{j}$, for $\{k,i,j\}=\{1,2,3\}$. From the intersection
matrix, we see that on each line $L_{k}$ there are $4$ points $p_{4k-3},p_{4k-2},p_{4k-1},p_{4k}$
such that each point is the intersection of two conics. Moreover,
one can label the conics and the $12$ points, so that
\[
\begin{array}{ccc}
\{p_{1},p_{2},p_{11},p_{12}\}\subset C_{1}, & \{p_{2},p_{4},p_{7},p_{8}\}\subset C_{3}, & \{p_{6},p_{8},p_{10},p_{12}\}\subset C_{5}\\
\{p_{3},p_{4},p_{9},p_{10}\}\subset C_{2}, & \{p_{1},p_{3},p_{5},p_{6}\}\subset C_{4}, & \{p_{5},p_{7},p_{9},p_{11}\}\subset C_{6}.
\end{array}
\]
The situation is quite symmetric since for any choice $C\in\{C_{1},C_{2}\}$,
$C'\in\{C_{3},C_{4}\}$, $C''\in\{C_{5},C_{6}\}$, there are exactly
$9$ points among the $12$ points that are on the union of $C,C',C''$.
Moreover for any such choice and any line $L_{j}$ there are exactly
$3$ points among the $4$ points $p_{k}$ on $L_{j}$ which are on
the union of $C,C',C''$.

Conversely, in order to construct a K3 surface $X$ with the same
properties, let us consider three planes $P_{1},P_{2},P_{3}$ and
the lines $L_{k}=P_{i}\cap P_{j}$. Let us fix on line $L_{1}$ the
points $p_{1},p_{2},p_{4}$, on $L_{2}$ the points $p_{6},p_{7},p_{8}$,
on $L_{3}$ the points $p_{10},p_{11},p_{12}$. For a set $\{a,b,c,d\}$
among 
\[
\{1,2,11,12\},\,\,\{2,4,7,8\},\,\,\{6,8,10,12\},
\]
the (projective) linear system of quadrics passing through points
$p_{a},p_{b},p_{c},p_{d}$ is $5$ dimensional. Taking such a quadric
$Q_{k}$ and intersecting it with the hyperplane section $P_{k}$
containing points $p_{a},p_{b},p_{c},p_{d}$ gives a conic $C_{2k-1}$
containing the points $p_{a},p_{b},p_{c},p_{d}$. That conic is irreducible
if we choose $Q_{k}$ generic. The intersections of the conic $C_{2k-1}$
with the lines $L_{i},L_{j}$ ($\{i,j,k\}=\{1,2,3\}$) gives the remaining
points $p_{3},p_{5},p_{9}$. The projective linear system of quartics
containing the three conics $C_{1},C_{3},C_{5}$ we constructed is
$10$ dimensional. By choosing a generic quartic, we obtain a smooth
K3 surface $X$ containing conics $C_{1},C_{3},C_{5}$. We denote
by $C_{2},C_{4},C_{6}$ the conics contained in $X$ which are residual
to $C_{1},C_{3},C_{5}$. These conics are smooth since we supposed
$X$ is generic. The $6$ conics $C_{k}$ and the points $p_{1},\dots,p_{12}$
have the incidence relation we described before, thus the intersection
matrix of the curves $C_{1},\dots,C_{6}$ is the same as for the surfaces
with N\'eron-Severi group isomorphic to $U(6)\oplus{A}_{2}$.
From that construction, it is clear that:

\begin{corollary}\label{uni11}
The moduli of K3 surfaces with $\NS(X)\simeq U(6)\oplus {A}_{2}$
is unirational.
\end{corollary}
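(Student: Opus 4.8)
The construction carried out just before the statement, combined with Proposition \ref{gen v11}, already contains all the ingredients; the plan is to read it as a rational parametrization of $\mathcal{M}_{11}$. First I would assemble the parameter space: a general $X$ with $\NS(X)\cong V_{11}$ is produced from three planes $P_1,P_2,P_3\subset\pp^3$, nine points $p_1,p_2,p_4\in L_1$, $p_6,p_7,p_8\in L_2$, $p_{10},p_{11},p_{12}\in L_3$ on the three lines $L_k=P_i\cap P_j$, three quadrics $Q_k$ each varying in the $\pp^5$ of quadrics through the prescribed four of these points (whose intersections $Q_k\cap P_k$ are the conics $C_1,C_3,C_5$ and which then fix the residual points $p_3,p_5,p_9$), and finally a quartic surface in the $\pp^{10}$ of quartics containing $C_1\cup C_3\cup C_5$. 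This exhibits the total parameter space $P$ as an iterated fibration with base $((\pp^3)^\vee)^3$ and successive fibres birational to $(\pp^1)^9$, $(\pp^5)^3$ and $\pp^{10}$; since all of these are rational, $P$ is rational.

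Next I would check that the associated family dominates the moduli space. For a general point of $P$ the quartic $X$ is smooth and carries six smooth conics $C_1,\dots,C_6$ whose mutual intersection numbers reproduce exactly the matrix of Table \ref{Intmat}, so the classes they span form a sublattice of rank $4$ isometric to $V_{11}=U(6)\oplus A_2$ and $\rho(X)\geq 4$. Conversely, by Proposition \ref{gen v11} every K3 surface with $\NS\cong V_{11}$ is a smooth quartic having three reducible hyperplane sections that split into pairs of conics, and running the construction backwards recovers such an $X$ from admissible data; hence the classifying rational map $P\dashrightarrow\mathcal{M}$ to the moduli of algebraic K3 surfaces has image containing $\mathcal{M}_{11}$. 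That image is irreducible (as $P$ is) and lies in the locus $\{\rho\geq 4\}$; since $\dim\mathcal{M}_{11}=20-4=16$ is the maximal dimension available there, the image is dense in $\overline{\mathcal{M}_{11}}$, the generic member of the family has $\NS\cong V_{11}$, and $P\dashrightarrow\mathcal{M}_{11}$ is dominant. Being dominated by the rational variety $P$, the moduli space $\mathcal{M}_{11}$ is unirational.

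The one step that genuinely needs care is the surjectivity just used: one must know that the construction reaches \emph{every} K3 surface with Néron--Severi lattice $V_{11}$, not merely a positive-dimensional subfamily. This is exactly what the explicit reverse construction preceding the statement provides — the conics, the intermediate quadrics and the final quartic can all be chosen freely — once one invokes Proposition \ref{gen v11} to guarantee that every such surface is a quartic of the required shape. Given that, rationality of $P$ closes the argument immediately; the large fibre dimension of $P\dashrightarrow\mathcal{M}_{11}$ merely records the redundancy built into the parametrization.
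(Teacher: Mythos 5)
Your proposal is correct and follows essentially the same route as the paper: the paragraph preceding the corollary is precisely the construction you package as the rational parameter space $P$ (planes, points on the lines $L_k$, the three $\pp^5$'s of quadrics, the $\pp^{10}$ of quartics through $C_1\cup C_3\cup C_5$), and the paper simply asserts that unirationality "is clear" from it. You have merely made explicit the two points the paper leaves implicit — rationality of the iterated fibration and dominance onto $\mathcal M_{11}$ via Proposition \ref{gen v11} together with the irreducibility/dimension argument inside the locus $\{\rho\geq 4\}$.
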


\subsection*{The family $\mathcal F_{12}$}

Let $X$ be a K3 surface with 
$$\NS(X)\cong V_{12}=\left[
\begin{array}{rr}
0&-3
\\
-3&2
\end{array}
\right]
\oplus A_2.$$
By Theorem \ref{main-eff} $X$ contains six classes of $(-2)$-curves 
whose intersection matrix is given in Table \ref{Intmat} and described 
in Figure \ref{fig6}.
The Hilbert basis of the nef cone of $X$ contains $33$ classes, 
two of them defining elliptic fibrations ($\BNef[30]$ and $\BNef[31]$) 
without sections and with one fiber of type $\tilde A_2$. 

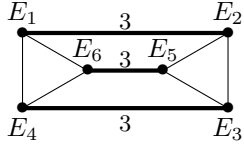
\begin{figure}[H]
\begin{center}
\begin{tikzpicture}[scale=1]

\draw (1,0) -- (1+0.866,0.5);
\draw (1,0) -- (1+0.866,-0.5);
\draw (0,0) -- (-0.866,0.5);
\draw (0,0) -- (-0.866,-0.5);

\draw (1+0.866,0.5) -- (1+0.866,-0.5);
\draw  (-0.866,0.5) -- (-0.866,-0.5);
\draw [ultra thick] (0,0) -- (1,0);
\draw [ultra thick] (-0.866,0.5) -- (1+0.866,0.5);
\draw [ultra thick] (-0.866,-0.5) -- (1+0.866,-0.5);


\draw (0,0) node {$\bullet$};
\draw (1,0) node {$\bullet$};
\draw (1+0.866,0.5) node {$\bullet$};
\draw (1+0.866,-0.5) node {$\bullet$};
\draw (-0.866,0.5) node {$\bullet$};
\draw (-0.866,-0.5) node {$\bullet$};

\draw (1,0) node [above]{$E_{5}$};
\draw (0,0) node [above]{$E_{6}$};
\draw (1+0.866,0.5) node [above]{$E_{2}$};
\draw (1+0.866,-0.5) node [below]{$E_{3}$};
\draw (-0.866,0.5) node [above]{$E_{1}$};
\draw (-0.866,-0.5) node [below]{$E_{4}$};

\draw (0.5,-0.07) node [above]{\small $3$};
\draw (0.5,0.43) node [above]{\small $3$};
\draw (0.5,-0.47) node [below]{\small $3$};

\end{tikzpicture}
\end{center} 
\caption{Intersection graph of $(-2)$-curves of $\mathcal F_{12}$}\label{fig6}
\end{figure}

\begin{proposition}\label{gen v12}
Let $X$ be a K3 surface with $\NS(X)\cong V_{12}$. 
Then 
\begin{enumerate}
\item  there is a double cover  $\pi:X\to \pp^2$ branched along a smooth plane sextic with 
three $3$-tangent lines $L_1, L_2, L_3$;
 \item $X$ can be defined by an equation of the following form in $\pp(1,1,1,3)$:
 \[
x_3^2=F_1(x_0,x_1, x_2)F_2(x_0,x_1, x_2)F_3(x_0,x_1, x_2)G_1(x_0,x_1, x_2)+G_2(x_0,x_1, x_2)^2,
 \]
 where $F_i\in\cc[x_0,x_1, x_2]$ are homogeneous of degree one for $i=1,2, 3$
 and $G_1,G_2\in\cc[x_0, x_1, x_2]$ are homogeneous of degree three;
\item the surface has six $(-2)$-curves: the curves $R_{ij}$, $i=1,2,3$, $j=1,2$, such that 
$\pi(R_{i1})=\pi(R_{i2})=L_i$;
 \item 
a minimal generating set of the Cox ring of $X$ is $s_1,\dots, s_8$, where: $s_1,\dots, s_6$ define the $(-2)$-curves and $s_7,s_8$ define smooth fibers of the two elliptic fibrations of $X$;
\item  for  a very general $X$ as before we have an isomorphism 
\[
 \cc[T_1,\dots, T_8]/I\to R(X),\ T_i\mapsto s_i,
\]
where the degrees of the generators $T_i$ for $i=1, \dots, 8$ are given by the columns of the following matrix
\[
\left(
\begin{array}{ccccccccc}
-1&0 & -1 &0 &0  &-1  &-2& -1 \\
-2& 0& -3 &1 &0 &-2 &-3&-3  \\
1& 0 &1  &0 &-1 &2 &3 &0 \\
2& -1 &1  &0 &0 &1 &3  & 0
\end{array}
\right)
\]
and the ideal $I$ is generated by the following polynomials:
\begin{eqnarray*}
&T_7T_8-\tilde G_1(T_1T_2,T_3T_4,T_5T_6),\\
&T_1T_3T_5T_7+T_2T_4T_6T_8-\tilde G_2(T_1T_2,T_3T_4,T_5T_6),
\end{eqnarray*}
where $\tilde G_i=G\circ \varphi^{-1}$, $i=1,2$ 
are obtained from $G_1,G_2$   in item ii) respectively composing with 
the coordinate change in $\pp^2$ given by 
\[
\varphi(x_0,x_1,x_2)=(F_1, F_2, F_3).
\]

\end{enumerate}
\end{proposition}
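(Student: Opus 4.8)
The plan is to proceed as in the proofs of Propositions \ref{gen v6}, \ref{gen v7} and \ref{gen v10}. First I would set $h:=f_{11}+f_{12}$, where $\{f_{i1},f_{i2}\}$, $i=1,2,3$, are the three pairs of $(-2)$-curves with $f_{i1}\cdot f_{i2}=3$ read off the intersection matrix in Table \ref{Intmat} (see Figure \ref{fig6}). Then $h^2=2$, and a direct check in $\Cl(X)$ using Table \ref{Table1} shows $h=f_{11}+f_{12}=f_{21}+f_{22}=f_{31}+f_{32}$; in particular $h$ lies in the Hilbert basis of the nef cone and $h\cdot f_{ij}=1$ for all six curves, so $h$ is ample. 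Since neither of the two elliptic fibrations of $X$ has a section, Corollary \ref{section} gives that $|h|$ is base point free, and by part~c) of Theorem \ref{hyp} the morphism $\pi:=\varphi_h$ is a double cover $X\to\pp^2$ branched along a smooth plane sextic $B$. As $h=\pi^*\mathcal{O}_{\pp^2}(1)$ and $h\cdot f_{ij}=1$, each $(-2)$-curve $R_{ij}$ is mapped isomorphically onto a line, $\pi^{-1}(L_i)=R_{i1}+R_{i2}$ with $R_{i1}\cdot R_{i2}=3$, where $L_i:=\pi(R_{ij})$, so $L_i$ is tangent to $B$ at three points; with Theorem \ref{main-eff} this gives items i) and iii).

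For item ii) I would read the incidences at the three points $L_i\cap L_j$ off the intersection matrix and verify they are globally compatible, i.e.\ that $R_{11}\cup R_{21}\cup R_{31}$ and $R_{12}\cup R_{22}\cup R_{32}$ are two triangles of rational curves meeting only over $B$; this says exactly that the restriction of $\pi$ over $D:=L_1\cup L_2\cup L_3$ is the trivial double cover (these two triangles turn out to be the reducible $\tilde A_2$ fibers of the two elliptic fibrations). From $0\to\mathcal{O}_{\pp^2}\to\mathcal{O}_{\pp^2}(3)\to\mathcal{O}_D(3)\to 0$ every section of $\mathcal{O}_D(3)$ is the restriction of a plane cubic, so (cf.\ \cite[Proposition 1.7, Ch.3]{Vermeulen}) triviality of the cover over $D$ forces $B|_D=(G_2|_D)^2$ for some cubic $G_2$; thus $B-G_2^2$ lies in the degree-$6$ part of the ideal $(F_1F_2F_3)$ of $D$, where $F_i$ is an equation of $L_i$, and $B=F_1F_2F_3\,G_1+G_2^2$ for a cubic $G_1$. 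This is the equation of $X$ in $\pp(1,1,1,3)$. (Alternatively, as in Proposition \ref{gen v6}, the same conclusion follows from a Bezout argument on the pencil spanned by $F_1F_2F_3G_1$ and $G_2^2$.)

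For item iv) I would invoke Theorem \ref{main-cox}: $R(X)$ is generated in the degrees of Table \ref{TableGen rank4}. The six $(-2)$-curve classes lie in $\BEff(X)$, so a generator in each is forced; each of the two elliptic-fibration classes carries a reducible $\tilde A_2$ fiber, so Lemma \ref{l2} forces a generator there, and since these classes have square $0$ a single section $s_7$, resp.\ $s_8$, cutting out a smooth fiber suffices. The class $h$ is not needed, because for a general $X$ the lines $L_1,L_2,L_3$ are independent, so the three products $s_1s_2,s_3s_4,s_5s_6$ (the pull-backs $\pi^*F_1,\pi^*F_2,\pi^*F_3$) span $H^0(X,h)=\pi^*H^0(\pp^2,\mathcal{O}(1))$. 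The remaining degrees of Table \ref{TableGen rank4} are discarded exactly as in Propositions \ref{gen v7} and \ref{gen v10}, via the surjectivity criteria of Theorem \ref{teokoszul}, Proposition \ref{va} and Proposition \ref{ottem}, run through the Magma programs of \cite{A.C.L}; hence $s_1,\dots,s_8$ is a minimal generating set.

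For item v), since $\pi$ is trivial over $D$ it is also trivial over $C:=\{G_1=0\}$ (because $B|_C=(G_2|_C)^2$), so $\pi^{-1}(C)=C^+\cup C^-$ is a union of two smooth genus-one curves, each a member of one of the two elliptic fibrations, and I take $s_7,s_8$ to be their defining sections. Factoring $(x_3-G_2)(x_3+G_2)=F_1F_2F_3G_1$ and comparing divisors yields, after a suitable rescaling of the $s_i$, that $x_3-G_2=s_1s_3s_5s_7$ and $x_3+G_2=s_2s_4s_6s_8$; multiplying gives $s_7s_8=\pi^*G_1=\tilde G_1(s_1s_2,s_3s_4,s_5s_6)$, and the appropriate combination gives $\pi^*G_2=\tilde G_2(s_1s_2,s_3s_4,s_5s_6)=s_1s_3s_5s_7+s_2s_4s_6s_8$, which are the two stated relations, with $\tilde G_i=G_i\circ\varphi^{-1}$ for $\varphi=(F_1,F_2,F_3)$; in particular $x_3$ is a polynomial in the $s_i$, so no further generator is required. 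Finally $\cc[T_1,\dots,T_8]/I$ is defined by two equations, hence has Krull dimension at least $6=\dim X+\rk\Cl(X)=\dim R(X)$; proving that $I$ is prime for general $F_i,G_1,G_2$ --- by the argument of \cite[Theorem 3.5]{A.C.L} --- makes it an integral domain of dimension $6$ surjecting onto $R(X)$, hence isomorphic to $R(X)$. The steps I expect to be the main obstacle are the computer-assisted verification that the extra nef degrees of Table \ref{TableGen rank4} yield no generator, and the primality of $I$; the remaining lattice and divisor bookkeeping (extracting the tritangency and triviality data from Figure \ref{fig6} and the degree matrix from Table \ref{Table1}) is routine but needs care.
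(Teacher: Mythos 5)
Your proposal is correct and follows essentially the same route as the paper's proof: the same ample class $h$ of square $2$ decomposing as the three sums of paired $(-2)$-curves, the same triviality-of-the-cover/Bezout argument (borrowed from the $\mathcal F_6$ case) for the equation $x_3^2=F_1F_2F_3G_1+G_2^2$, the same observation that $s_1s_2,s_3s_4,s_5s_6$ span $H^0(h)$ so no generator is needed in degree $h$, and the same factorization $(x_3+G_2)(x_3-G_2)=F_1F_2F_3G_1$ plus the primality-and-dimension argument of \cite[Theorem 3.5]{A.C.L} to identify $R(X)$ with $\cc[T_1,\dots,T_8]/I$. The only differences are cosmetic (sign conventions in the factorization and an explicit ideal-sheaf sequence in place of the cited pencil argument).
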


\begin{proof}
Let $f_1,\dots,f_6$ be the classes of the $(-2)$-curves 
and $h=\BNef[33]$. Then  $h^2=2$,  $h\cdot f_i=1$ for all $i$.
Thus $h$ is ample. By Corollary \ref{section} the associated linear system is base point free,
thus it defines a double cover $\pi:X\to \pp^2$ branched along a smooth plane sextic $B$.
Since $h=f_1+f_{2}=f_3+f_{4}=f_5+f_6,$ 
the image by $\pi$ of the six $(-2)$-curves of $X$ are three lines $L_1,L_2,L_3 \subseteq \pp^2$ 
such that $\pi^{-1}(L_i)$ is the union of two smooth rational curves for each $i=1,\dots, 3$.
This implies that the lines $L_i$ are $3$-tangent to $B$.

By Theorem \ref{main-cox} the Cox ring $R(X)$ is generated in the following degrees: 
\[
f_1,\dots, f_{6},\ h^*,\ e_1, e_2 
\]
where $e_1$ and $e_2 $ are the two elliptic fibrations of $X$.
Clearly any minimal generating set of $R(X)$ must contain the sections $s_1,\dots,s_{6}$ defining 
the $(-2)$-curves of $X$ and one section defining a smooth fiber for each elliptic fibration.

It can be proved as in the proof of Proposition \ref{gen v6} that $X$  is a double cover of $\pp^2$ branched along a smooth plane sextic $B$ defined by an equation of the form $F_1F_2F_3G_1+G_2^2=0$,
where $F_1,F_2,F_3\in \cc[x_0,x_1,x_2]$ are homogeneous of degree one and $G_1,G_2\in \cc[x_0,x_1,x_2]$ of degree $3$.
Since $F_1,F_2,F_3$ can be chosen to be independent, then  
$s_1s_2, s_3s_4, s_5s_6$ give a basis of  $H^0(h)$. 
Thus a generator of $R(X)$ in degree $h$ is not necessary. 

The first element in $I$ is due to the fact that the preimage by $\pi$ of the plane cubic $G_1=0$ 
is the union of two smooth elliptic curves which are fibers of the two distinct elliptic fibrations of $X$ 
(in fact $\BNef[30]+\BNef[31]=3h$).
The last relation follows from the fact that 
\[
(x_3+G_2 )(x_3-G_2)=F_1F_2F_3G_1,
\] 
thus up to renumbering we can assume $x_3+G_2=s_1s_3s_5s_7$, $x_3-G_2=s_2s_4s_6s_8$, so that $2G_2=s_1s_3s_3s_5s_7 -s_2s_4s_6s_8$. Up to rescaling the generators $s_i$ we obtain the last relation.
 
t can be proved with the same type of argument used in the proof of 
\cite[Theorem 3.5]{A.C.L} that the ideal $I$ is prime for general $G_1, G_2$. 
Thus $\cc[T_1,\dots,T_8]/I\cong R(X)$,  since it is an integral domain of dimension $\dim R(X)= \dim (X)+ \rk \Cl(X)=6$.
%
%
%
\end{proof}

The proof of the following result is similar to the one of Corollary \ref{uni6}.

\begin{corollary}\label{uni12}
The moduli space of K3 surfaces with $\NS(X)\cong V_{12}$ is unirational.
\end{corollary}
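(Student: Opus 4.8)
The plan is to follow the proof of Corollary~\ref{uni6} essentially verbatim in structure, feeding in the explicit projective model supplied by item~(ii) of Proposition~\ref{gen v12}. By that item, any $X$ in the family $\mathcal F_{12}$ is the double cover of $\pp^2$ branched along a smooth plane sextic of the form
\[
B:\quad F_1F_2F_3\,G_1+G_2^2=0,
\]
with $F_1,F_2,F_3\in\cc[x_0,x_1,x_2]$ linear and $G_1,G_2\in\cc[x_0,x_1,x_2]$ cubic. Hence the coarse moduli space $\mathcal M_{12}$ is dominated, via the rational map sending such data to the isomorphism class of the associated K3 surface, by (an open subset of) the vector space
\[
H^0(\pp^2,\Osh_{\pp^2}(1))^{\oplus3}\oplus H^0(\pp^2,\Osh_{\pp^2}(3))^{\oplus2},
\]
using the map $\Phi\colon(F_1,F_2,F_3,G_1,G_2)\mapsto F_1F_2F_3G_1+G_2^2$ into $H^0(\pp^2,\Osh_{\pp^2}(6))$ and then passing to the $\mathrm{PGL}(3,\cc)$-quotient. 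Since this parameter space is an affine space, hence rational, the unirationality of $\mathcal M_{12}$ will follow once the induced rational map to $\mathcal M_{12}$ is shown to be dominant, which is immediate from Proposition~\ref{gen v12}(ii): it realizes \emph{every} member of $\mathcal F_{12}$ in this way.

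To obtain the sharper birational statement that an open subset of $\mathcal M_{12}$ is a $\mathrm{PGL}(3,\cc)$-quotient of an open subset of the image of $\Phi$, carrying the expected $16$ moduli, I would also establish the converse. First, a Bertini-type argument — with nonemptiness of the smooth locus certified by a single explicit member, in the spirit of the Examples following Propositions~\ref{gen v6} and~\ref{gen v10} — shows that $B$ is smooth for general parameters, so the double cover $Y\to\pp^2$ is a K3 surface. Then, for each line $L_i=\{F_i=0\}$, the restriction of the branch equation to $L_i$ equals $G_2^2|_{L_i}$, the square of a cubic form, so $B$ meets $L_i$ at three points each with even multiplicity and the cover $\pi\colon Y\to\pp^2$ is trivial over $L_i$; hence $\pi^{-1}(L_i)$ splits as a union $R_{i1}\cup R_{i2}$ of two smooth rational curves. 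A direct intersection computation then shows that the six classes $[R_{ij}]$ realize the intersection matrix of Table~\ref{Intmat}, so they span a rank-four sublattice of $\NS(Y)$ isometric to $V_{12}$; since $V_{12}$ is $2$-reflective of rank four the corresponding moduli space is $16$-dimensional, and since this sublattice is realized uniformly over the whole constructed family, $\NS(Y)\cong V_{12}$ for very general parameters.

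The main obstacle is precisely this last point — the two-sided identification $\NS(Y)\cong V_{12}$ for the general member — since one must rule out extra independent $(-2)$-classes. Exactly as in Corollary~\ref{uni6}, this can be handled either by a specialization/monodromy argument that matches the number of moduli of the constructed family to the expected $20-4=16$, or, more concretely, by exhibiting a surface in the family defined over a number field whose good reduction has geometric Picard number four with discriminant group isomorphic to that of $V_{12}$, which then forces $\NS\cong V_{12}$ for the generic member.
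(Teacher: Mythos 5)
Your proposal is correct and takes essentially the same approach as the paper, whose proof of this corollary consists solely of the remark that it is analogous to Corollary \ref{uni6}: you carry out exactly that analogue, parametrizing the branch sextics $F_1F_2F_3G_1+G_2^2=0$ from Proposition \ref{gen v12}(ii) by the rational space $H^{0}(\pp^{2},\Osh_{\pp^2}(1))^{\oplus3}\oplus H^{0}(\pp^{2},\Osh_{\pp^2}(3))^{\oplus2}$, passing to the ${\rm PGL}(3,\cc)$-quotient, and handling the converse via the splitting of $\pi^{-1}(L_i)$ and the $20-4=16$ dimension count. Your additional remarks on distinguishing $V_{12}$ from $V_{13}$ via triviality of the cover over $F_1F_2F_3=0$ are a welcome clarification but do not change the method.
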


\subsection*{The family $\mathcal F_{13}$}

Let $X$ be a K3 surface with 
$$\NS(X)\cong V_{13}=\left[
\begin{array}{rrrr}
2&-1&-1&-1
\\
-1&-2&0&0
\\
-1&0&-2&0
\\
-1&0&0&-2
\end{array}\right].$$
By Theorem \ref{main-eff} $X$ contains six  $(-2)$-curves 
whose intersection matrix is given in Table \ref{Intmat}
and described in Figure \ref{fig7}.
The Hilbert basis of the nef cone of $X$ contains $39$ classes of positive self-intersection. 
Thus $X$ has no elliptic fibrations. 

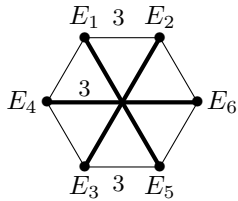
\begin{figure}[H]
\begin{center}
\begin{tikzpicture}[scale=1]

\draw  (1,0) -- (0.5,0.86);
\draw  (-1,0) -- (-0.5,0.86);
\draw  (-0.5,-0.86) -- (0.5,-0.86);
\draw  (-0.5,0.86) -- (0.5,0.86);
\draw  (-0.5,-0.86) -- (-1,0);
\draw  (0.5,-0.86) -- (1,0);

\draw [ultra thick] (0,0) -- (0.5,-0.86);
\draw [ultra thick] (0,0) -- (-0.5,-0.86);
\draw [ultra thick] (0,0) -- (-0.5,0.86);
\draw [ultra thick] (-1,0) -- (1,0);
\draw [ultra thick] (0,0) -- (0.5,0.86);

\draw (1,0) node {$\bullet$};
\draw (0.5,0.86) node {$\bullet$};
\draw (-1,0) node {$\bullet$};
\draw (-0.5,0.86) node {$\bullet$};
\draw (-0.5,-0.86) node {$\bullet$};
\draw (0.5,-0.86)  node {$\bullet$};

\draw (1,0) node  [right]{$E_{6}$};
\draw (0.5,0.86) node [above]{$E_{2}$};
\draw (-1,0) node  [left]{$E_{4}$};
\draw (-0.5,0.86) node  [above]{$E_{1}$};
\draw (-0.5,-0.86) node  [below]{$E_{3}$};
\draw (0.5,-0.86) node   [below]{$E_{5}$};

\draw (-0.5,-0.04) node [above]{\small $3$};
\draw (-0.05,0.9) node [above]{\small $3$};
\draw (-0.05,-1.3) node [above]{\small $3$};

\end{tikzpicture}
\end{center}
\caption{Intersection graph of $(-2)$-curves of $\mathcal F_{13}$}\label{fig7} 
\end{figure}

\begin{proposition}\label{gen v13}
Let $X$ be a K3 surface with $\NS(X)\cong V_{13}$. Then
\begin{enumerate}
\item   there is a double cover $\pi:X\to \pp^2$ branched along 
 a smooth plane sextic with three $3$-tangent lines $L_1,L_2$ and $L_3$;
\item the surface has six $(-2)$-curves:  the six curves $R_{ij}$, $i=1,2,3$, $j=1,2$ such that 
$\pi(R_{i1})=\pi(R_{i2})=L_i$;
\item the Cox ring is generated in the degrees given in Table \ref{TableGen rank4}, in particular has at least $24$ generators.
 \end{enumerate}
\end{proposition}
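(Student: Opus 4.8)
The plan is to follow the pattern of the earlier double-plane families, in particular Propositions \ref{gen v6} and \ref{gen v12}: realize $X$ as a double cover of $\pp^2$ coming from an ample class of square $2$, and then read the branch sextic and the behaviour of the $(-2)$-curves off the intersection data of Table \ref{Intmat} and Figure \ref{fig7}.

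First I would single out the class $h\in\BNef(X)$ with $h^2=2$ and $h\cdot f_i=1$ for all six $(-2)$-curve classes $f_1,\dots,f_6$; it exists by Theorem \ref{main-eff}, and since $h\cdot f_i>0$ for every $i$ it is ample. As all $39$ classes in $\BNef(X)$ have positive self-intersection, $X$ carries no elliptic fibration, so Corollary \ref{section} applies and $|h|$ is base point free; Theorem \ref{hyp} c) then shows that $\pi:=\varphi_h\colon X\to\pp^2$ is a double cover branched along a plane sextic $B$, and $B$ is smooth because $h$ is ample, so no $(-2)$-curve is contracted. The lattice gives three relations of the form $h=f_a+f_b$, one for each pair of curves joined by a triple edge in Figure \ref{fig7} (so $f_a\cdot f_b=3$ and $f_a\cdot h=f_b\cdot h=1$); hence $\pi$ carries the two curves $R_{i1},R_{i2}$ of such a pair onto a single line $L_i$ with $\pi^{*}L_i=R_{i1}+R_{i2}$, so $\pi$ restricts to an unbranched double cover over $L_i$, i.e. $B|_{L_i}=2D_i$ with $\deg D_i=3$ and $L_i$ is $3$-tangent to $B$. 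This proves (i) and (ii). I would also record that, in contrast with $\mathcal F_{12}$, the intersection graph of the six $(-2)$-curves is $K_{3,3}$ and so contains no triangle: were $\pi$ trivial over $L_1\cup L_2\cup L_3$, the three curves $R_{11},R_{21},R_{31}$ would meet pairwise in one point and form a triangle. Hence the double cover is non-trivial over $L_1\cup L_2\cup L_3$, which is why (unlike $\mathcal F_{12}$) no Cox-ring presentation is offered in this case.

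For (iii): by Theorem \ref{main-cox} a minimal homogeneous generating set of $R(X)$ has its degrees among those listed in Table \ref{TableGen rank4}. The six $(-2)$-curve classes belong to $\BEff(X)$ and therefore appear in every minimal generating set. For each of the remaining (nef) degrees $w_0$ in the table I would apply the minimality criterion of Proposition \ref{minimal}: using the Magma routines of \cite{A.C.L} one checks that every representation $w_0=\sum a_iw_i$ as a nonnegative integral combination of the candidate degrees contains in its support a $(-2)$-curve, or two $(-2)$-curves with positive mutual intersection, or a suitable triple of $(-2)$-curves; hence $w_0$ is a required degree. This leaves at least $24$ necessary degrees, giving the asserted lower bound.

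The main obstacle is computational rather than conceptual, and it is exactly what sets $\mathcal F_{13}$ apart from the ``nice'' families: with $|\BNef(X)|=39$ the list of candidate degrees produced by Theorem \ref{gen} (sums of at most three Hilbert-basis classes of the nef cone, there being no $2(F+F')$ terms since $\NS(X)$ is neither $V_5$ nor $V_9$) is too large for the Koszul-type surjectivity tests of Theorem \ref{teokoszul}, Proposition \ref{va} and Proposition \ref{ottem} to eliminate all of it. So one cannot isolate a minimal generating set exactly and must be content with the degrees of Table \ref{TableGen rank4} and the bound of $24$ generators coming from Proposition \ref{minimal}.
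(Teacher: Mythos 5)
Your proposal is correct and follows essentially the same route as the paper: the ample class $h=\BNef[1]$ of square $2$ gives the double cover $\pi:X\to\pp^2$, the relations $h=f_1+f_2=f_3+f_5=f_4+f_6$ (the three pairs with mutual intersection $3$) produce the three $3$-tangent lines, and Theorem \ref{main-cox} together with the minimality test of Proposition \ref{minimal} yields the $24$ necessary degrees; your added observation that the cover is non-trivial over $L_1\cup L_2\cup L_3$ (no triangle among cross-pairs in the intersection graph) is also correct and matches the description of $\mathcal F_{13}$ in the introduction. The one small imprecision is in part (iii): the minimality test does \emph{not} succeed for the degree $h=\BNef[1]$ itself, which is exactly why it is starred in Table \ref{TableGen rank4} and why the count is $24=6+18$ rather than $25$, so your claim that every remaining nef degree in the table passes the test should exclude that one.
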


\begin{proof}
Let $f_1,\dots, f_6$ be the classes of the $(-2)$-curves and $h=\BNef[1]$.
Then $h^2=2$ and $h\cdot f_i=1$ for all $i$.
 Thus $h$ is ample and the associated linear system is base point free by Corollary \ref{section}.
 Thus it defines a double cover $\pi:X\to\pp^2$ branched along a smooth plane sextic $B$.
Since $h=f_3+f_{5}=f_1+f_{2}=f_4+f_6,$  the image by $\pi$ of the six $(-2)$-curves of $X$ 
are three lines $L_1,L_2,L_3\subseteq \pp^2$ 
such that $\pi^{-1}(L_i)$ is the union of two smooth rational curves for each $i=1,2,3$.
This implies that $L_1,L_2,L_3$ are $3$-tangent to $B$.

By Theorem \ref{main-cox} the Cox ring $R(X)$ is generated in the following degrees: 
\[
f_1,\dots, f_{6},\ h^*,\ h_1,h_2,\dots, h_{18},
\]
where $h_1,\ h_2,\dots,\ h_{18}$  are classes in the Hilbert basis of the nef cone,
non-hyperelliptic, 
with self-intersection $4$ (six of them), $26$ (six of them) and $28$ (six of them).
By the minimality test (Proposition \ref{minimal}) $R(X)$ has a generator in all the above degrees,
except possibly for $h$.
\end{proof}

\begin{remark}
In \cite{XR2} the author proved that a K3 surface which is the double
cover of the plane branched over a generic sextic which has $3$ tritangent
lines has N\'eron-Severi group as above, or isomorphic to  the lattice $V_{12}$. 
So one can see how differently the configurations of the $3$ tritangent
lines lift to the K3 surface. 
\end{remark}

\subsection*{The family $\mathcal F_{14}$}

Let $X$ be a K3 surface with 
\[
\NS(X)\cong V_{14}=\left[
\begin{array}{rrrr}
12&-2&0&0
\\
-2&-2&-1&0
\\
0&-1&-2&-1
\\
0&0&-1&-2
\end{array}
\right]
.\]
By Theorem \ref{main-eff} $X$ contains eight classes of  $(-2)$-curves 
whose intersection matrix is given in Table \ref{Intmat}.
The Hilbert basis of the nef cone of $X$ contains $111$ classes, none of them defining elliptic fibrations. 

\begin{proposition}\label{gen v14}
Let $X$ be a K3 surface with $\NS(X)\cong  V_{14}$. Then
\begin{enumerate}
\item  there is a double cover  $\pi:X\to \pp^2$ branched along a smooth plane sextic with one $3$-tangent line $L$ 
and three $6$-tangent conics $C_1,C_2$ and $C_3$;
\item the surface has eight $(-2)$-curves: the curves $R_{ij}$, $i=1,2,3$ such that 
$\pi(R_{i1})=\pi(R_{i2})=C_i$ and the curves $S_1,S_2$ such that 
$\pi(S_1)=\pi(S_2)=L$;
\item the Cox ring of $X$ has at least $71$ generators whose degrees are either classes of 
$(-2)$-curves or  elements of the Hilbert basis of the nef cone.
\end{enumerate}
\end{proposition}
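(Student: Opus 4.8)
The plan is to follow the pattern used for the earlier families. First I would single out in the Hilbert basis of the nef cone a class $h$ with $h^2=2$ and read off from Table \ref{Intmat} that $h\cdot f_i>0$ for every class $f_i$ of a $(-2)$-curve, so that $h$ is ample. Since $X$ carries no elliptic fibration (every element of $\BNef(X)$ has positive self-intersection), Corollary \ref{section} applies and $|h|$ is base point free; being of self-intersection $2$ it falls into case c) of Theorem \ref{hyp}, so $\varphi_h$ is a degree two morphism $\pi\colon X\to\pp^2$ branched along a plane sextic $B$. As $h$ is ample, $\varphi_h$ contracts no curve, so $B$ is smooth. Next I would exhibit in $\NS(X)$ the linear equivalences $h\sim S_1+S_2$ and $2h\sim R_{i1}+R_{i2}$ for $i=1,2,3$, where the eight divisors appearing on the right are exactly the $(-2)$-curves recorded in Table \ref{Intmat}. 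Pushing forward, $\pi(S_1)=\pi(S_2)$ is a line $L$ with $\pi^*L=S_1+S_2$, which forces $L$ to meet $B$ with even multiplicity at each intersection point, hence to be $3$-tangent to $B$; likewise each $\pi(R_{i1})=\pi(R_{i2})$ is a conic $C_i$ with $\pi^*C_i=R_{i1}+R_{i2}$, hence $6$-tangent to $B$. This gives (i), and (ii) is then immediate: the eight curves $S_1,S_2,R_{ij}$ are $(-2)$-curves, and by Theorem \ref{main-eff} these are all of them.

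For (iii) I would invoke Theorem \ref{main-cox} (equivalently Theorem \ref{gen}): since $X$ has no elliptic fibration there is no generator of the form $2(F+F')$, so every generator of $R(X)$ has degree either the class of a $(-2)$-curve or a sum of at most three elements of $\BNef(X)$. Each of the eight $(-2)$-curves is irreducible, so its defining section cannot be factored into homogeneous elements of strictly smaller degree; equivalently its class lies in $\BEff(X)$, and by the remarks in Section \ref{cox} every minimal homogeneous generating set of $R(X)$ contains a generator in each of these eight degrees. On the nef side, Corollary \ref{section} guarantees that the linear system of every effective nef divisor of $X$ is base point free, so every $w_0\in\BNef(X)$ satisfies the base-point-freeness hypothesis of Proposition \ref{minimal}. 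Running the minimality test of Proposition \ref{minimal} (together with item i) of Theorem \ref{teokoszul}), implemented in the Magma routines of Section \ref{cox}, against the finite list of candidate nef degrees furnished by Theorem \ref{gen}, one verifies that for $63$ of the $111$ elements of $\BNef(X)$ one of the conditions (i)--(iii) of Proposition \ref{minimal} holds, so $R(X)$ has a generator in each such degree. Adding the eight $(-2)$-curve degrees yields at least $71$ necessary generators, all of the asserted shape.

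The main obstacle is that, in contrast with the smaller families, we cannot determine a \emph{minimal} generating set of $R(X)$: the nef cone has $111$ Hilbert basis elements, so the list of nef degrees that are sums of up to three of them is very large, and the discarding techniques of Theorem \ref{teokoszul}, Proposition \ref{va} and Proposition \ref{ottem} become computationally prohibitive. Hence the number $71$ is only a lower bound extracted purely from the "necessity" side of the argument; upgrading it to the exact set of generator degrees would require genuinely new computational input, and for this reason the statement is deliberately phrased as a lower bound and (iii) asserts only that these $71$ degrees occur, consistently with the formulation of Theorem \ref{main-cox} in the $V_{14}$ case.
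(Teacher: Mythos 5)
Your proposal is correct and follows essentially the same route as the paper: the authors take $h=\BNef[8]$ (with $h^2=2$, $h\cdot f_i=2$ for $i=1,\dots,5,8$ and $h\cdot f_6=h\cdot f_7=1$), use Corollary \ref{section} to get the base point free degree-two map to $\pp^2$, and read the line and the three conics off the decompositions $h=f_6+f_7$ and $2h=f_4+f_5=f_2+f_8=f_1+f_3$, while item (iii) is deduced from Theorem \ref{main-cox}, whose proof for $V_{14}$ applies exactly the necessity tests (Theorem \ref{teokoszul}~i) and Proposition \ref{minimal}) you describe, yielding the $8+63=71$ degrees of Table \ref{Table degree generator F14}. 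Your write-up merely makes explicit a few steps the paper leaves implicit (ampleness of $h$, smoothness of the branch sextic, the vacuous applicability of Corollary \ref{section} in the absence of elliptic fibrations).
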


\begin{proof}
Let $f_1,\dots, f_8$ be the classes of the $(-2)$-curves 
and $h=\BNef[8]$. Then  $h^2=2$,  $h\cdot f_i=2$ for $i=1,\dots, 5,8$ and $h\cdot f_i=1$ for $i=6,7$.
By Corollary \ref{section} the associated linear system  is base point free, thus defines 
a double cover of $\pp^2$ branched along a smooth plane sextic $B$.
Since $h=f_6+f_{7},$ and $2h=f_4+f_{5}=f_2+f_{8}=f_1+f_3,$ 
the image by $\pi$ of the eight $(-2)$-curves of $X$ are three smooth conics $C_1,C_2,C_3\subseteq \pp^2$
 and one line $L\subseteq \pp^2$,
such that $\pi^{-1}(C_i)$, $i=1,2,3$ and  $\pi^{-1}(L)$ is the union of two smooth rational curves.
The last item follows from Theorem \ref{main-cox}.
\end{proof}

\section{Magma code}\label{magma}
We mention here the Magma \cite{B.C.P}  libraries which are used in the proof of Theorem \ref{main-cox}. 
Section 3.3 of \cite{A.C.L} contains a detailed the description of the functions in each library.
\\

\begin{enumerate}[$\bullet$]
\item \texttt{LSK3Lib.m}: to deal with linear systems on K3 surfaces,
\item\texttt{Find-2.m}:  to compute the set of $(-2)$-curves of a Mori dream K3 
surface,
\item \texttt{TestLib.m}: contains functions based on the results in section \ref{cox},
\item \texttt{MinimalLib.m}:  contains functions which check the minimality of a generating set of $R(X)$.\\
\end{enumerate}

Moreover, we will add to the arXiv version of the present paper the following two files:\\

\begin{enumerate}[$\bullet$]
\item \texttt{K3Rank4}: text file containing the intersection matrix and the list of classes of $(-2)$-curves for all Mori dream K3 surfaces of Picard number $3$
\item \texttt{Gen(K3Rank4)}: text file containing the list of classes which pass all tests 
in  \texttt{TestLib.m} (thus contains the degrees of a generating set of $R(X)$), for all Mori dream K3 surfaces of Picard number $4$).
\end{enumerate}


 \section{Tables}\label{tables}
%

This section contains the tables with the relevant information about Mori dream K3 surfaces of Picard number four: 
N\'eron-Severi lattice, effective cone and its Hilbert basis, nef cone and its Hilbert basis. 
Moreover, we provide a nef and big divisor in the Hilbert basis of the nef cone with minimum self-intersection in each family of K3 surfaces 
and its intersection properties with $(-2)$-curves. Finally, we will give the degrees of a set of generators of the Cox ring $R(X)$.

We recall that in the tables we will adopt this notation: $\NS(X)$ denotes the N\'eron-Severi lattice of the surface, $\Eff(X)$ is the effective cone and $\BEff(X)$ is its Hilbert basis, 
$E(X)$ is the set of generators of the extremal rays of the effective cone (i.e. the set of classes of the $(-2)$-curves),  $\Nef(X)$ is the nef cone and $\BNef(X)$ is its Hilbert basis, 
$N(X)$ is the set of generators of the extremal rays of $\Nef(X)$.

In Table \ref{Table1} we give $E(X)$, $\BEff(X)$, $N(X)$ and $\BNef(X)$, in Table \ref{Intmat} we give the intersection matrix of $(-2)$-curves for each family of Mori dream K3 surfaces of Picard number four.
 In Tables \ref{Table2} and \ref{Table3} we give the Hilbert basis of the nef cone of $X$ when the lattice $\NS(X)$ is isometric to $V_1$, $V_{2}$, $V_{13}$ or $V_{14}$. 

For each family of Mori dream K3 surfaces of Picard number three, in Table \ref{Table5} we give a nef and big class $H\in \BNef(X)$ of minimal self-intersection and its intersection properties with the $(-2)$-curves.

In Table \ref{TableGen rank4} we give the degrees of a set of generators of the Cox ring $R(X)$ when $\NS(X)$ is not isometric to $V_{14}$.
All degrees in the Table \ref{TableGen rank4} are necessary to generate $R(X)$, except possibly for those marked with a star.
In case $\NS(X)\cong V_{14}$ we give a subset of the degrees of a minimal generating set of $R(X)$.

\newpage

\begin{table}[H] 
\centering\renewcommand{\arraystretch}{1.3}\setlength{\tabcolsep}{2pt}
\caption{Effective and Nef cones.}
\label{Table1}
\vspace{.2cm}
\begin{tabular}{|c|c|c|c|c|c|c|}
\hline
${N}^\circ$&$\NS(X)$& $E(X)$& $\BEff(X)$ & $N(X)$ &  $\BNef(X)$ \\
\hline\hline
1& $V_1$ & 
 $\begin{array}{c}
(1, -1,  0, -2),\\
    (1, -1,  2,  0),\\
    (0,  0, -1,  0),\\
    (2, -2,  2, -3),\\
    (0,  1,  0,  0),\\
    (2, -2,  3, -2),\\
    (1,  0,  1, -2),\\
    (1, -2,  0, -1),\\
    (1,  0,  2, -1),\\
    (1, -2,  1,  0),\\
    (2, -3,  2, -2),\\
    (0,  0,  0,  1)
\end{array}$  & 
 $\begin{array}{c}
 E(X)\\
 \cup\\
 \{(1,-1,1,-1)\} 
 \end{array}$ & 
$\begin{array}{c}
(1, -2,  0,  0),\\
    (1,  0,  0, -2),\\
    (1,  0,  0,  0),\\
    (1,  0,  2,  0),\\
    (3, -4,  0, -4),\\
    (3, -4,  2, -4),\\
    (3, -4,  4, -2),\\
    (3, -4,  4,  0),\\
    (3, -2,  4, -4),\\
    (3,  0,  4, -4),\\
    (5, -8,  4, -4),\\
    (5, -4,  4, -8),\\
    (5, -4,  8, -4),\\
    (7, -8,  8, -8)
\end{array}$& 
See Table \ref{Table2}\\
\hline
2&$V_2$ & 
$\begin{array}{c}
( 1,  1,  1,  0),\\
    ( 0,  0,  0,  1),\\
    (-1,  1,  1,  0),\\
    ( 0,  0, -1, -1),\\
    ( 0,  1,  1, -1),\\
    ( 0,  1,  2,  1)
\end{array}$ &
$E(X)$ & 
 $\begin{array}{c}
(-3,  6,  4, -4),\\
    (-3,  6,  8,  4),\\
    (-1,  1,  0,  0),\\
    (-1,  3,  4,  0),\\
    ( 1,  1,  0,  0),\\
    ( 1,  3,  4,  0),\\
    ( 3,  6,  4, -4),\\
    ( 3,  6,  8,  4)
\end{array}$& See Table \ref{Table2}\\
\hline
3&$V_3$ & $\begin{array}{c}
 (1,  0,  2,  1),\\
    (0,  0,  0, -1),\\
    (1,  1,  2,  2),\\
    (0,  1,  0,  0),\\
    (0, -1, -1,  0)
\end{array}$  &
$E(X)$ &
$\begin{array}{c}
(1,  0,  0,  0),\\
    (1,  1,  2,  1),\\
    (2, -1,  2,  1),\\
    (2,  1,  2,  3),\\
    (3,  0,  4,  4)
\end{array}$ & $\begin{array}{c}
(1, 0, 0, 0),
    (1, 0, 1, 1),\\
    (1, 1, 2, 1),
    (2, -1, 2, 1),\\
    (2, 0, 2, 1),
    (2, 1, 2, 2),\\
    (2, 1, 2, 3),
    (3, 0, 4, 3),\\
    (3, 0, 4, 4),
    (3, 1, 4, 4)
\end{array}$ \\
\hline
4&$V_4$ & $\begin{array}{c}
( 0,  0,  0, -1),\\
    (-1,  0,  1,  0),\\
    ( 1, -1,  0,  0),\\
    (-1,  0,  0,  1),\\
    ( 0,  0, -1,  0)
\end{array}$  &
$E(X)$ &
$\begin{array}{c}
(-2, -2,  0,  1),\\
    (-2, -2,  1,  0),\\
    (-2, -2,  1,  1),\\
    (-1, -1,  0,  0),\\
    (-1,  0,  0,  0)
    \end{array}$& $\begin{array}{c}
(-2, -2, 0, 1),\\
    (-2, -2, 1, 0),\\
    (-2, -2, 1, 1),\\
    (-1, -1, 0, 0),\\
    (-1, 0, 0, 0)
\end{array}$\\
\hline
5&$V_5$ & $\begin{array}{c}
 ( 0, -1,  0,  1),\\
    ( 0, -1,  1,  0),\\
    ( 0,  0,  0, -1),\\
    (-1,  0,  1,  0),\\
    (-1,  0,  0,  1),\\
    ( 0,  0, -1,  0)
\end{array}$ &
$E(X)$ &
$\begin{array}{c}
(-1, -1,  0,  1),\\
    (-1, -1,  1,  0),\\
    (-1, -1,  1,  1),\\
    (-1,  0,  0,  0),\\
    ( 0, -1,  0,  0)
\end{array}$ & $\begin{array}{c}
(-1, -1, 0, 1),\\
    (-1, -1, 1, 0),\\
    (-1, -1, 1, 1),\\
    (-1, 0, 0, 0),\\
    (0, -1, 0, 0)
\end{array}$ \\
\hline
\end{tabular}
\end{table}

\begin{table}[H] 
\centering\renewcommand{\arraystretch}{1.3}\setlength{\tabcolsep}{2pt}
\vspace{1cm}
\begin{tabular}{|c|c|c|c|c|c|c|}
\hline
${N}^\circ$&$\NS(X)$&$E(X)$&$\BEff(X)$ & $N(X)$&$\BNef(X)$ \\
\hline\hline
6& $V_6$ & 
 $\begin{array}{c}
( 0,  0,  0,  1),\\
    (-2, -2, -3, -2),\\
    ( 0, -1,  0, -1),\\
    ( 0,  0,  1,  0),\\
    (-1,  0,  0, -1),\\
    (-2, -2, -2, -3),\\
    ( 0, -1, -1,  0),\\
    (-1,  0, -1,  0)
\end{array}$  & 
 $ E(X)$ & 
$\begin{array}{c}
(-4, -4, -6, -3),\\
    (-4, -4, -3, -6),\\
    (-3, -2, -3, -3),\\
    (-2, -3, -3, -3),\\
    (-2, -2, -3,  0),\\
    (-2, -2,  0, -3),\\
    (-1,  0,  0,  0),\\
    ( 0, -1,  0,  0)
\end{array}$& 
$\begin{array}{c}
 (-4, -4, -6, -3),
    (-4, -4, -3, -6),\\
    (-3, -3, -4, -3),
    (-3, -3, -3, -4),\\
    (-3, -2, -3, -3),
    (-3, -2, -3, -2),\\
    (-3, -2, -2, -3),
    (-2, -3, -3, -3),\\
    (-2, -3, -3, -2),
    (-2, -3, -2, -3),\\
    (-2, -2, -3, -1),
    (-2, -2, -3, 0),\\
    (-2, -2, -1, -3),
    (-2, -2, 0, -3),\\
    (-1, -1, -1, -1),
    (-1, -1, -1, 0),\\
    (-1, -1, 0, -1),
    (-1, 0, 0, 0),\\
    (0, -1, 0, 0)
\end{array}$\\
\hline
7&$V_7$ & 
$\begin{array}{c}
 ( 0, -1,  0,  1),\\	
    ( 0,  0,  0, -1),\\
    (-1, -1, -2,  1),\\
    (-1, -1, -1,  2),\\
    ( 0,  0,  1,  0),\\
    (-1,  0,  0,  1),\\
    ( 0, -1, -1,  0),\\
    (-1,  0, -1,  0)
\end{array}$ &
$E(X)$ & 
 $\begin{array}{c}
 (-2, -1, -2,  2),\\
    (-1, -2, -2,  2),\\
    (-1, -1, -2,  0),\\
    (-1, -1,  0,  2),\\
    (-1,  0,  0,  0),\\
    ( 0, -1,  0,  0)
\end{array}$& $\begin{array}{c}
(-2, -2, -3, 2),
    (-2, -2, -2, 3),\\
    (-2, -1, -2, 1),
    (-2, -1, -2, 2),\\
    (-2, -1, -1, 2),
    (-1, -2, -2, 1),\\
    (-1, -2, -2, 2),
    (-1, -2, -1, 2),\\
    (-1, -1, -2, 0),
    (-1, -1, -1, 0),\\
    (-1, -1, -1, 1),
    (-1, -1, 0, 1),\\
    (-1, -1, 0, 2),
    (-1, 0, 0, 0),\\
    (0, -1, 0, 0)
\end{array}$\\
\hline
8&$V_8$ & $\begin{array}{c}
 ( 0,  0,  0,  1),\\
    (-1,  0,  1,  0),\\
    ( 1, -1,  0,  0),\\
    ( 0,  0, -1, -1)
\end{array}$  &
$E(X)$ &
$\begin{array}{c}
(-3, -3,  1, -1),\\
    (-3, -3,  2,  1),\\
    (-1, -1,  0,  0),\\
    (-1,  0,  0,  0)
\end{array}$ & $\begin{array}{c}
(-3, -3, 1, -1),
    (-3, -3, 2, 1),\\
    (-2, -2, 1, 0),
    (-1, -1, 0, 0),\\
    (-1, 0, 0, 0)
\end{array}$ \\
\hline
9&$V_9$ & $\begin{array}{c}
( 0,  0,  0,  1),\\
    ( 0, -1,  1,  0),\\
    (-1,  0,  1,  0),\\
    ( 0,  0, -1, -1)
\end{array}$  &
$E(X)$ &
$\begin{array}{c}
(-3, -3,  2, -2),\\
    (-3, -3,  4,  2),\\
    (-1,  0,  0,  0),\\
    ( 0, -1,  0,  0)
    \end{array}$& $\begin{array}{c}
(-3, -3, 2, -2),
    (-3, -3, 4, 2),\\
    (-2, -2, 1, -1),
    (-2, -2, 2, 1),\\
    (-1, -1, 1, 0),
    (-1, 0, 0, 0),\\
    (0, -1, 0, 0)
\end{array}$\\
\hline
10&$V_{10}$ & $\begin{array}{c}
 ( 0,  0,  0, -1),\\
    ( 0, -1,  1,  1),\\
    (-1,  0,  1,  1),\\
    ( 0,  0, -1,  0)
 \end{array}$ &
$E(X)$ &
$\begin{array}{c}
 (-1, -1,  1,  2),\\
    (-1, -1,  2,  1),\\
    (-1,  0,  0,  0),\\
    ( 0, -1,  0,  0)
\end{array}$ & $\begin{array}{c}
 (-1, -1, 1, 1),
    (-1, -1, 1, 2),\\
    (-1, -1, 2, 1),
    (-1, 0, 0, 0),\\
    (0, -1, 0, 0)
\end{array}$ \\
\hline
\end{tabular}
\end{table}

\begin{table}[H] 
\centering\renewcommand{\arraystretch}{1.3}\setlength{\tabcolsep}{2pt}
\vspace{1cm}
\begin{tabular}{|c|c|c|c|c|c|c|}
\hline
${N}^\circ$&$\NS(X)$& $E(X)$& $\BEff(X)$ & $N(X)$ &  $\BNef(X)$ \\
\hline\hline
11& $V_{11}$ & 
 $\begin{array}{c}
( 0,  0,  0, -1),\\
    ( 0, -1,  1,  1),\\
    (-1, -1,  3,  2),\\
    (-1,  0,  1,  1),\\
    (-1, -1,  2,  3),\\
    ( 0,  0, -1,  0)
\end{array}$  & 
 $ E(X)$ & 
$\begin{array}{c}
(-3, -2,  6,  6),\\
    (-2, -3,  6,  6),\\
    (-2, -1,  2,  4),\\
    (-2, -1,  4,  2),\\
    (-1, -2,  2,  4),\\
    (-1, -2,  4,  2),\\
    (-1,  0,  0,  0),\\
    ( 0, -1,  0,  0)
\end{array}$& 
$\begin{array}{c}
 (-3, -3, 7, 7),
    (-3, -2, 5, 6),\\
    (-3, -2, 6, 5),
    (-3, -2, 6, 6),\\
    (-2, -3, 5, 6),
    (-2, -3, 6, 5),\\
    (-2, -3, 6, 6),
    (-2, -2, 3, 5),\\
    (-2, -2, 4, 5),
    (-2, -2, 5, 3),\\
    (-2, -2, 5, 4),
    (-2, -1, 2, 3),\\
    (-2, -1, 2, 4),
    (-2, -1, 3, 2),\\
    (-2, -1, 3, 3),
    (-2, -1, 4, 2),\\
    (-1, -2, 2, 3),
    (-1, -2, 2, 4),\\
    (-1, -2, 3, 2),
    (-1, -2, 3, 3),\\
    (-1, -2, 4, 2),
    (-1, -1, 1, 1),\\
    (-1, -1, 1, 2),
    (-1, -1, 2, 1),\\
    (-1, -1, 2, 2),
    (-1, 0, 0, 0),\\
    (0, -1, 0, 0)
\end{array}$\\
\hline
12&$V_{12}$ & 
 $\begin{array}{c}
 (-1,  -2,  1, 2),\\
    (0,  0,  0,  -1),\\
    (-1,  -3, 1,  1),\\
    (0,  1,  0,  0),\\
    (0,  0,  -1,  0),\\
    (-1, -2,  2,  1)
\end{array}$ &
$E(X)$ & 
 $\begin{array}{c}
(-7, -15,   9,   9),\\
    (-5, -12,   3,   6),\\
    (-5, -12,   6,   3),\\
    (-4,  -6,   3,   6),\\
    (-4,  -6,   6,   3),\\
    (-2,  -3,   0,   0),\\
    (-2,  -3,   3,   3),\\
    (-1,  -3,   0,   0)
    \end{array}$& 
     $\begin{array}{c}
    (-7, -15, 9, 9),
    (-5, -12, 3, 6),\\
    (-5, -12, 6, 3),
    (-5, -11, 6, 6),\\
    (-4, -9, 3, 5),
    (-4, -9, 4, 5),\\
    (-4, -9, 5, 3),
    (-4, -9, 5, 4),\\
    (-4, -6, 3, 6),
    (-4, -6, 6, 3),\\
    (-3, -7, 3, 3),
    (-3, -6, 2, 4),\\
    (-3, -6, 3, 4),
    (-3, -6, 4, 2),\\
    (-3, -6, 4, 3),
    (-3, -6, 4, 4),\\
    (-3, -5, 2, 4),
    (-3, -5, 4, 2),\\
    (-2, -5, 1, 2),
    (-2, -5, 2, 1),\\
    (-2, -4, 1, 2),
    (-2, -4, 2, 1),\\
    (-2, -3, 0, 0),
    (-2, -3, 1, 1),\\
    (-2, -3, 1, 2),
    (-2, -3, 2, 1),\\
    (-2, -3, 2, 2),
    (-2, -3, 2, 3),\\
    (-2, -3, 3, 2),
    (-2, -3, 3, 3),\\
    (-1, -3, 0, 0),
    (-1, -2, 0, 0),\\
    (-1, -2, 1, 1)
   \end{array}$ \\
\hline
\end{tabular}
\end{table}

\begin{table}[H] 
\centering\renewcommand{\arraystretch}{1.3}\setlength{\tabcolsep}{2pt}
\vspace{1cm}
\begin{tabular}{|c|c|c|c|c|c|c|}
\hline
${N}^\circ$&$\NS(X)$& $E(X)$& $\BEff(X)$ & $N(X)$ &  $\BNef(X)$ \\
\hline\hline
13&$V_{13}$ & $\begin{array}{c}
 (0,  0,  0, -1),\\
    (1,  0,  0,  1),\\
    (0,  0, -1,  0),\\
    (1,  1,  0,  0),\\
    (1,  0,  1,  0),\\
    (0, -1,  0,  0)
\end{array}$  &
$E(X)$ &
$\begin{array}{c}
 (2, -1, -1, -1),\\
    (5,  1,  1,  1),\\
    (6, -3, -3,  4),\\
    (6, -3,  4, -3),\\
    (6,  4, -3, -3),\\
    (8, -4,  3,  3),\\
    (8,  3, -4,  3),\\
    (8,  3,  3, -4)
\end{array}$ & See Table \ref{Table2} \\
\hline
14&$V_{14}$ & $\begin{array}{c}
 (1,  0, -1, -2),\\
    (0, -1,  1, -1),\\
    (1,  2, -1,  0),\\
    (2,  2, -3, -2),\\
    (0,  0,  1,  0),\\
    (0,  0, -1,  1),\\
    (1,  1,  0, -2),\\
    (2,  3, -3, -1)
\end{array}$  &
$E(X)$ &
$\begin{array}{c}
( 2,  -3,   2,  -1),\\
    ( 7,  -3,  -8, -11),\\
    ( 7,  12,  -8,   4),\\
    ( 8,   3,   8, -19),\\
    (13,   3,  -2, -29),\\
    (13,  18,  -2, -14),\\
    (17,  12, -28, -16),\\
    (17,  27, -28,  -1),\\
    (22,  27, -38, -11),\\
    (23,  18, -22, -34),\\
    (23,  33, -22, -19),\\
    (28,  33, -32, -29)
    \end{array}$& See Table \ref{Table3}\\
\hline
\end{tabular}
\end{table}

\begin{table}[H] 
\centering\renewcommand{\arraystretch}{1.3}\setlength{\tabcolsep}{2pt}
\caption{$\BNef(X)$ for $\NS(X)=V_1$, $V_{2}$ and $V_{13}$.}
\label{Table2}
\vspace{.2cm}
\begin{tabular}{|c|c|}
\hline 
 ${N}^\circ$& $\BNef(X)$ \\
\hline\hline
1&$\begin{array}{c}
  (1, -2, 0, 0),
    (1, -1, 0, -1),
    (1, -1, 0, 0),
    (1, -1, 1, -1),
    (1, -1, 1, 0),
    (1, 0, 0, -2),
    (1, 0, 0, -1),\\
    (1, 0, 0, 0),
    (1, 0, 1, -1),
    (1, 0, 1, 0),
    (1, 0, 2, 0),
    (2, -3, 0, -2),
    (2, -3, 1, -2),\\
    (2, -3, 2, -1),
    (2, -3, 2, 0),
    (2, -2, 0, -3),
    (2, -2, 1, -3),
    (2, -2, 3, -1),
    (2, -2, 3, 0),\\
    (2, -1, 2, -3),
    (2, -1, 3, -2),
    (2, 0, 2, -3),
    (2, 0, 3, -2),
    (3, -5, 2, -2),
    (3, -4, 0, -4),\\
    (3, -4, 1, -4),
    (3, -4, 2, -4),
    (3, -4, 3, -3),
    (3, -4, 4, -2),
    (3, -4, 4, -1),
    (3, -4, 4, 0),\\
    (3, -3, 3, -4),
    (3, -3, 4, -3),
    (3, -2, 2, -5),
    (3, -2, 4, -4),
    (3, -2, 5, -2),
    (3, -1, 4, -4),\\
    (3, 0, 4, -4),
    (4, -6, 3, -4),
    (4, -6, 4, -3),
    (4, -4, 3, -6),
    (4, -4, 6, -3),
    (4, -3, 4, -6),\\
    (4, -3, 6, -4),
    (5, -8, 4, -4),
    (5, -6, 5, -6),
    (5, -6, 6, -5),
    (5, -5, 6, -6),
    (5, -4, 4, -8),\\
    (5, -4, 8, -4),
    (7, -8, 8, -8)
     \end{array}$\\
     \hline
    2
 & $\begin{array}{c}
    (-3, 6, 4, -4),
    (-3, 6, 8, 4),
    (-2, 4, 3, -2),
    (-2, 4, 5, 2),
    (-1, 1, 0, 0),
    (-1, 2, 1, -1),\\
    (-1, 2, 2, 0),
    (-1, 2, 2, 1),
    (-1, 3, 2, -2),
    (-1, 3, 3, -1),
    (-1, 3, 4, 0),
    (-1, 3, 4, 1),\\
    (-1, 3, 4, 2),
    (0, 1, 0, 0),
    (0, 1, 1, 0),
    (0, 2, 1, -1),
    (0, 2, 2, 1),
    (0, 3, 2, -2),\\
    (0, 3, 3, -1),
    (0, 3, 4, 0),
    (0, 3, 4, 1),
    (0, 3, 4, 2),
    (1, 1, 0, 0),
    (1, 2, 1, -1),\\
    (1, 2, 2, 0),
    (1, 2, 2, 1),
    (1, 3, 2, -2),
    (1, 3, 3, -1),
    (1, 3, 4, 0),
    (1, 3, 4, 1),\\
    (1, 3, 4, 2),
    (2, 4, 3, -2),
    (2, 4, 5, 2),
    (3, 6, 4, -4),
    (3, 6, 8, 4)
\end{array}$\\
\hline
 13& $\begin{array}{c}
 (1, 0, 0, 0),
    (2, -1, -1, -1),
    (2, -1, -1, 0),
    (2, -1, -1, 1),
    (2, -1, 0, -1),
    (2, -1, 0, 0),\\
    (2, -1, 0, 1),
    (2, -1, 1, -1),
    (2, -1, 1, 0),
    (2, 0, -1, -1),
    (2, 0, -1, 0),
    (2, 0, -1, 1),\\
    (2, 0, 0, -1),
    (2, 0, 1, -1),
    (2, 1, -1, -1),
    (2, 1, -1, 0),
    (2, 1, 0, -1),
    (3, -1, 1, 1),\\
    (3, 0, 0, 1),
    (3, 0, 1, 0),
    (3, 1, -1, 1),
    (3, 1, 0, 0),
    (3, 1, 1, -1),
    (4, 0, 1, 1),\\
    (4, 1, 0, 1),
    (4, 1, 1, 0),
    (5, -2, -2, 3),
    (5, -2, 3, -2),
    (5, 1, 1, 1),
    (5, 3, -2, -2),\\
    (6, -3, -3, 4),
    (6, -3, 2, 2),
    (6, -3, 4, -3),
    (6, 2, -3, 2),
    (6, 2, 2, -3),
    (6, 4, -3, -3),\\
    (8, -4, 3, 3),
    (8, 3, -4, 3),
    (8, 3, 3, -4)
 \end{array}$\\
    \hline 
 \end{tabular}
\end{table}

  \begin{table}[H] 
\centering\renewcommand{\arraystretch}{1.3}\setlength{\tabcolsep}{2pt}
\caption{$\BNef(X)$ for $\NS(X)=V_{14}$.}
\label{Table3}
\vspace{.2cm}
\begin{tabular}{|c|c|}
\hline 
 ${N}^\circ$& $\BNef(X)$ \\
\hline\hline  
  14 &
   $\begin{array}{c} 
   (1, -1, 0, -1),
    (1, -1, 1, -1),
    (1, 0, -1, -1),
    (1, 0, 0, -2),
    (1, 0, 0, -1),\\
    (1, 0, 0, 0),
    (1, 0, 1, -2),
    (1, 1, -1, -1),
    (1, 1, -1, 0),
    (1, 1, 0, -1),\\
    (2, -3, 2, -1),
    (2, -2, 1, -1),
    (2, -1, -2, -3),
    (2, -1, -1, -3),
    (2, 0, -2, -3),\\
    (2, 0, -1, -4),
    (2, 1, -3, -2),
    (2, 1, -2, -3),
    (2, 1, 0, -4),
    (2, 1, 1, -4),\\
    (2, 2, -3, -1),
    (2, 2, -1, 0),
    (2, 2, 0, -3),
    (2, 3, -3, 0),
    (2, 3, -2, -1),\\
    (2, 3, -2, 0),
    (2, 3, -2, 1),
    (2, 3, -1, -1),
    (3, -1, -3, -5),
    (3, 0, -4, -4),\\
    (3, 1, -1, -6),
    (3, 1, 2, -7),
    (3, 1, 3, -7),
    (3, 2, -2, -5),
    (3, 2, 2, -6),\\
    (3, 3, -5, -2),
    (3, 3, -4, -3),
    (3, 3, -2, -4),
    (3, 4, -5, -1),
    (3, 4, -4, -2),\\
    (3, 4, -2, -3),
    (3, 4, -1, -3),
    (3, 5, -4, 1),
    (3, 5, -3, 1),
    (4, 1, 0, -9),\\
    (4, 1, 1, -9),
    (4, 3, -6, -4),
    (4, 3, -5, -5),
    (4, 4, -4, -5),
    (4, 4, 1, -6),\\
    (4, 5, -4, -4),
    (4, 5, 0, -5),
    (4, 6, -6, -1),
    (4, 6, -5, -2),
    (5, 1, -1, -11),\\
    (5, 2, -7, -6),
    (5, 2, -2, -10),
    (5, 3, -8, -5),
    (5, 4, -5, -7),
    (5, 6, -8, -3),\\
    (5, 6, -7, -4),
    (5, 7, -5, -4),
    (5, 7, -2, -5),
    (5, 7, -1, -5),
    (5, 8, -8, 0),\\
    (5, 8, -7, 0),
    (6, -2, -7, -9),
    (6, 5, -10, -5),
    (6, 7, -7, -6),
    (6, 9, -10, -1),\\
    (6, 10, -7, 3),
    (7, -3, -8, -11),
    (7, 3, 6, -16),
    (7, 4, -5, -12),
    (7, 5, -11, -7),\\
    (7, 5, -6, -11),
    (7, 8, -12, -4),
    (7, 9, -12, -3),
    (7, 10, -6, -6),
    (7, 10, -5, -6),\\
    (7, 11, -11, -1),
    (7, 12, -8, 4),
    (8, 3, 8, -19),
    (8, 7, -8, -11),
    (8, 8, -13, -6),
    (8, 11, -13, -3),\\
    (8, 11, -8, -7),
    (9, 7, -9, -13),
    (9, 10, -10, -10),
    (9, 11, -15, -5),
    (9, 11, -10, -9),\\
    (9, 13, -9, -7),
    (10, 10, -11, -12),
    (10, 13, -11, -9),
    (11, 3, -2, -24),
    (11, 13, -13, -11),\\
    (11, 15, -2, -12),
    (13, 3, -2, -29),
    (13, 18, -2, -14),
    (14, 10, -23, -13),
    (14, 22, -23, -1),\\
    (17, 12, -28, -16),
    (17, 27, -28, -1),
    (18, 22, -31, -9),
    (19, 15, -18, -28),
    (19, 27, -18, -16),\\
    (22, 27, -38, -11),
    (23, 18, -22, -34),
    (23, 27, -26, -24),
    (23, 33, -22, -19),
    (28, 33, -32, -29)
\end{array}$\\
\hline
\end{tabular}
\end{table}

\newpage

\begin{table}[H] 
\centering\renewcommand{\arraystretch}{1.3}\setlength{\tabcolsep}{2pt}
\caption{Intersection matrix of $(-2)$-curves.}
\label{Intmat}
\vspace{.2cm}
\begin{tabular}{|c|c|c|}
\hline
${N}^\circ$&Lattice& intersection matrix of $(-2)$-curves \\
\hline\hline
1&$V_1$& 
$\left[\begin{array}{cccccccccccc}
-2&6&0&0&2&4&0&0&4&4&2&4\\
6&-2&4&4&2&0&4&4&0&0&2&0\\
0&4&-2&4&0&6&2&0&4&2&4&0\\
0&4&4&-2&4&0&0&2&2&4&0&6\\
2&2&0&4&-2&4&0&4&0&4&6&0\\
4&0&6&0&4&-2&2&4&0&2&0&4\\
0&4&2&0&0&2&-2&4&0&6&4&4\\
0&4&0&2&4&4&4&-2&6&0&0&2\\
4&0&4&2&0&0&0&6&-2&4&4&2\\
4&0&2&4&4&2&6&0&4&-2&0&0\\
2&2&4&0&6&0&4&0&4&0&-2&4\\
4&0&0&6&0&4&4&2&2&0&4&-2
\end{array}\right]$ \\
\hline
2&$V_2$& 
$\left[\begin{array}{cccccc}
-2&1&6&1&1&1\\
1&-2&1&1&3&0\\
6&1&-2&1&1&1\\
1&1&1&-2&0&3\\
1&3&1&0&-2&1\\
1&0&1&3&1&-2
\end{array}\right]$\\
\hline
3&$V_3$& 
$\left[\begin{array}{ccccc}
-2&0&0&2&1\\
0&-2&2&0&1\\
0&2&-2&0&1\\
2&0&0&-2&1\\
1&1&1&1&-2
\end{array}\right]$ \\
\hline
4&$V_4$& 
$\left[\begin{array}{ccccc}
-2&0&0&2&0\\
0&-2&1&0&2\\
0&1&-2&1&0\\
2&0&1&-2&0\\
0&2&0&0&-2
\end{array}\right]$ \\
\hline
\end{tabular}
\end{table}

\begin{table}[H] 
\centering\renewcommand{\arraystretch}{1.3}\setlength{\tabcolsep}{2pt}
\vspace{1cm}
\begin{tabular}{|c|c|c|}
\hline
${N}^\circ$&Lattice& intersection matrix of $(-2)$-curves \\
\hline\hline
5&$V_5$& 
$\left[\begin{array}{cccccc}
-2&0&2&2&0&0\\
0&-2&0&0&2&2\\
2&0&-2&0&2&0\\
2&0&0&-2&0&2\\
0&2&2&0&-2&0\\
0&2&0&2&0&-2
\end{array}\right]$\\
\hline
6&$V_6$& 
$\left[\begin{array}{cccccccc}
-2&4&2&0&2&6&0&0\\
4&-2&2&6&2&0&0&0\\
2&2&-2&0&1&0&0&3\\
0&6&0&-2&0&4&2&2\\
2&2&1&0&-2&0&3&0\\
6&0&0&4&0&-2&2&2\\
0&0&0&2&3&2&-2&1\\
0&0&3&2&0&2&1&-2
\end{array}\right]$ \\
\hline
7&$V_7$& 
$\left[\begin{array}{cccccccc}
-2&2&2&0&0&2&0&4\\
2&-2&2&4&0&2&0&0\\
2&2&-2&0&4&2&0&0\\
0&4&0&-2&2&0&2&2\\
0&0&4&2&-2&0&2&2\\
2&2&2&0&0&-2&4&0\\
0&0&0&2&2&4&-2&2\\
4&0&0&2&2&0&0&-2
\end{array}\right]$\\
\hline
8&$V_8$& 
$\left[\begin{array}{cccc}
-2&1&0&1\\
1&-2&1&1\\
0&1&-2&0\\
1&1&0&-2
\end{array}\right]$ \\
\hline
9&$V_9$& 
$\left[\begin{array}{cccc}
-2&1&1&1\\
1&-2&0&1\\
1&0&-2&1\\
1&1&1&-2
\end{array}\right]$ \\
\hline
\end{tabular}
\end{table}

\begin{table}[H] 
\centering\renewcommand{\arraystretch}{1.3}\setlength{\tabcolsep}{2pt}
\vspace{1cm}
\begin{tabular}{|c|c|c|}
\hline
${N}^\circ$&Lattice& intersection matrix of $(-2)$-curves \\
\hline\hline
10&$V_{10}$& 
$\left[\begin{array}{cccc}
-2&1&1&1\\
1&-2&1&1\\
1&1&-2&1\\
1&1&1&-2
\end{array}\right]$\\
\hline
11&$V_{11}$& 
$\left[\begin{array}{cccccc}
-2&1&1&1&4&1\\
1&-2&1&4&1&1\\
1&1&-2&1&1&4\\
1&4&1&-2&1&1\\
4&1&1&1&-2&1\\
1&1&4&1&1&-2
\end{array}\right]$ \\
\hline
12&$V_{12}$& 
$\left[\begin{array}{cccccc}
-2&3&0&1&0&1\\
3&-2&1&0&1&0\\
0&1&-2&3&1&0\\
1&0&3&-2&0&1\\
0&1&1&0&-2&3\\
1&0&0&1&3&-2
\end{array}\right]$\\
\hline
13&$V_{13}$& 
$\left[\begin{array}{cccccc}
-2&3&0&1&1&0\\
3&-2&1&0&0&1\\
0&1&-2&1&3&0\\
1&0&1&-2&0&3\\
1&0&3&0&-2&1\\
0&1&0&3&1&-2
\end{array}\right]$ \\
\hline
14&$V_{14}$& 
$\left[\begin{array}{cccccccc}
-2&0&6&0&4&1&1&4\\
0&-2&4&4&0&1&1&6\\
6&4&-2&4&0&1&1&0\\
0&4&4&-2&6&1&1&0\\
4&0&0&6&-2&1&1&4\\
1&1&1&1&1&-2&3&1\\
1&1&1&1&1&3&-2&1\\
4&6&0&0&4&1&1&-2
\end{array}\right]$ \\
\hline
\end{tabular}
\end{table}

\newpage

\begin{center}
\begin{table}[H] 
\centering
\renewcommand{\arraystretch}{1.8}\setlength{\tabcolsep}{2pt}
\caption{ Intersection of a nef and big divisor $H$ with $(-2)$-curves.}
\label{Table5}
\vspace{.2cm}
\begin{tabular}{|c|c|c|c|}
\hline
${N}^\circ$&$\NS(X)$& $H$& Intersection properties\\
\hline\hline

1&$V_1$ & (1,-1,1,-1) &
$H^2=2$, $H\cdot E_i=2$, $i=1,\dots, 12$
\\
\hline

2&$V_2$ & (0,1,1,0)&
{$\!\begin{aligned}
&H^2=2,\, H\cdot E_1=H\cdot E_3=2\\
&H\cdot E_i=1,\, i=2,4,5,6\\
\end{aligned}$}
\\
\hline

3&$V_3$ & (1,0,1,1)&
{$\!\begin{aligned}
&H^2=2,\, H\cdot E_5=0\\
&H\cdot E_i=1, \, i=1,2,3,4\\
\end{aligned}$}
\\ 
\hline

4&$V_4$ & (-2,-2,1,1)&
{$\!\begin{aligned}
&H^2=4,\, H\cdot E_1=H\cdot E_5=2\\
&H\cdot E_i=0,\, i=2,3,4\\
\end{aligned}$}
\\
\hline

5&$V_5$ & (-1,-1,0,1)&
{$\!\begin{aligned}
&H^2=2,\, H\cdot E_i=0,\, i=1,5,6\\
&H\cdot E_i=2,\, i=2,3,4\\
\end{aligned}$}
\\
\hline

6&$V_6$ & (-1,-1,-1,-1)&
{$\!\begin{aligned}
&H^2=2,\, H\cdot E_i=2,\, i=1,2,4,6\\
&H\cdot E_i=1,\, i=3,5,7,8\\
\end{aligned}$}
\\
\hline

7&$V_7$ & (-1,-1,-1,1) &
$H^2=4$, $H\cdot E_i=2$, $i=1,\dots,8$
\\
\hline

8&$V_8$ & (-2,-2,1,0)&
{$\!\begin{aligned}
&H^2=6,\, H\cdot E_1= H\cdot E_4=1\\
&H\cdot E_2= H\cdot E_3=0\\
\end{aligned}$}
\\
\hline

9&$V_9$ & (-1,-1,1,0)&
{$\!\begin{aligned}
&H^2=2,\, H\cdot E_1= H\cdot E_4=1\\
&H\cdot E_2= H\cdot E_3=0\\
\end{aligned}$}
\\ 
\hline

10&$V_{10}$ & (-1,-1,1,1)&
 $H^2=4$, $H\cdot E_i=1$, $i=1,2,3,4$ \\
\hline

11&$V_{11}$ & (-1,-1,2,2)&
 $H^2=4$, $H\cdot E_i=2$, $i=1,\dots, 6$ \\
\hline

12&$V_{12}$ & (1,-2,1,1)&
 $H^2=2$, $H\cdot E_i=1$, $i=1,\dots, 6$ \\
\hline

13&$V_{13}$ & (1,0,0,0)&
 $H^2=2$, $H\cdot E_i=1$, $i=1,\dots, 6$ \\
\hline

14&$V_{14}$ & (1,1,-1,-1)&
{$\!\begin{aligned}
&H^2=2,\, H\cdot E_i= 2,\, i=1,2,3,4,5,8\\
&H\cdot E_6= H\cdot E_7=1\\
\end{aligned}$}
\\
\hline
\end{tabular}
\end{table}
\end{center}
\restoregeometry

 \begin{center}
\begin{table}[H]
\centering
\renewcommand{\arraystretch}{1.8}\setlength{\tabcolsep}{2pt}
\caption{Degrees of a set of generators of $R(X)$.}\label{TableGen rank4}
\vspace{.2cm}
\begin{tabular}{|c|c|l|}
  \hline
${N}^\circ$&$\NS(X)$&  Degrees of generators of $R(X)$\\
\hline
\hline
1&$V_1$&$\BEff$\\
\hline
2&$V_2$&$E, \BNef[i],i=1-5, 7, 11, 14, 15, 20, 23, 25, 29, 32-35$\\
\hline
3&$V_3$&$E, \BNef[i],i=1, 2, 4, 7, 9$\\
\hline
4&$V_4$&$E, \BNef[3], \BNef[3]+\BNef[4]$\\ 
\hline
5&$V_5$&$E, \BNef[1]+\BNef[2] $\\
\hline
6&$V_6$& 
 $E, \BNef[15]$\\
\hline
7&$V_{7}$&$E$\\
\hline
8&$V_{8}$&{$\!\begin{aligned}
&E, \BNef[i], i=1,2,3,5 \\ 
\end{aligned}$}\\
\hline
9&$V_{9}$&$E, \BNef[i],i=1-4, 6, 7$\\
\hline
10&$V_{10}$&$E, \BNef[i],i=2-5$\\
\hline
11&$V_{11}$&
{$\!\begin{aligned}
&E, \BNef[i],i=4, 7, 9, 11, 13, 15, 16, 18, 20, 21, 23, 24, 26, 27\\
&\BNef[i]^*, i=1-3, 5, 6, 8, 10, 12, 14, 17, 19, 22, 25
\end{aligned}$}\\
\hline
12&$V_{12}$&$E, \BNef[30], \BNef[31]$\\
\hline
13&$V_{13}$&$E,\BNef[i],i=1^*, 7, 9, 12, 14, 16, 17, 27, 28, 30-39$\\
  \hline
14&$V_{14}$& Contains the degrees in Table \ref{Table degree generator F14}\\
\hline
\end{tabular}
\end{table}
\end{center}

 \begin{table}[H] 
\centering\renewcommand{\arraystretch}{1.3}\setlength{\tabcolsep}{2pt}
\caption{Degrees of generators of the Cox ring of the family $\mathcal F_{14}$.}
\label{Table degree generator F14}
\vspace{.2cm}
\begin{tabular}{|c|c|}
\hline 
 ${N}^\circ$& degrees of generators\\
\hline\hline  
  14 &
   $\begin{array}{c} 
   (1, 0, -1, -2 ),
    ( 0, -1, 1, -1 ),
    ( 1, 2, -1, 0 ),
    ( 2, 2, -3, -2 ),
    ( 0, 0, 1, 0 ),\\
    ( 0, 0, -1, 1 ),
    ( 1, 1, 0, -2 ),
    ( 2, 3, -3, -1),
    (1, -1, 0, -1),
    (1, 0, -1, -1),\\
    (1, 0, 0, 0),
    (1, 1, -1, -1),
    (1, 1, -1, 0),
    (2, -3, 2, -1),
    (2, -2, 1, -1),\\
    (2, -1, -2, -3),
    (2, 1, -3, -2),
    (2, 1, 0, -4),
    (2, 2, -3, -1),
    (2, 2, 0, -3),\\
    (2, 3, -3, 0),
    (2, 3, -2, 1),
    (3, 0, -4, -4),
    (3, 1, 2, -7),
    (3, 2, -2, -5),\\
    (3, 2, 2, -6),
    (3, 3, -5, -2),
    (3, 3, -2, -4),
    (3, 4, -5, -1),
    (3, 4, -2, -3),\\
    (3, 5, -4, 1),
    (4, 1, 0, -9),
    (4, 4, -4, -5),
    (4, 5, -4, -4),
    (4, 5, 0, -5),\\
    (5, 2, -2, -10),
    (5, 3, -8, -5),
    (5, 7, -2, -5),
    (5, 8, -8, 0),
    (6, -2, -7, -9),\\
    (6, 5, -10, -5),
    (6, 9, -10, -1),
    (6, 10, -7, 3),
    (7, -3, -8, -11),
    (7, 3, 6, -16),\\
    (7, 5, -6, -11),
    (7, 8, -12, -4),
    (7, 9, -12, -3),
    (7, 10, -6, -6),
    (7, 12, -8, 4),\\
    (8, 3, 8, -19),
    (8, 7, -8, -11),
    (8, 11, -8, -7),
    (9, 10, -10, -10),
    (9, 11, -10, -9),\\
    (11, 3, -2, -24),
    (11, 15, -2, -12),
    (13, 3, -2, -29),
    (13, 18, -2, -14),\\
    (14, 10, -23, -13),
    (14, 22, -23, -1),
    (17, 12, -28, -16),
    (17, 27, -28, -1),\\
    (18, 22, -31, -9),
    (19, 15, -18, -28),
    (19, 27, -18, -16),
    (22, 27, -38, -11),\\
    (23, 18, -22, -34),
    (23, 27, -26, -24),
    (23, 33, -22, -19),
    (28, 33, -32, -29)
\end{array}$\\
\hline
\end{tabular}
\end{table}
\restoregeometry

\bibliography{ref}
\bibliographystyle{alpha}

\end{document}